\newtheorem{theorem}{Theorem}[section]
\newtheorem{lemma}{Lemma}[section]
\newtheorem{definition}{Definition}[section]
\newtheorem{remark}{Remark}[section]
\numberwithin{equation}{section}
\theoremstyle{plain}
\newcommand{\me}{\mathrm{e}}
\newcommand{\mi}{\mathrm{i}}
\newcommand{\Rmnum}[1]{\expandafter\@slowromancap\romannumeral #1@}
\DeclareMathOperator{\meas}{meas}
\begin{document}
\title{A KAM Theorem for Higher Dimensional  Reversible Nonlinear Schr\"{o}dinger Equations}
\author{Yingnan Sun\footnote{E-mail: flzssun@qq.com}, Zhaowei Lou\footnote{E-mail: zhwlou@nju.edu.cn},  Jiansheng Geng\footnote{Corresponding author. E-mail: jgeng@nju.edu.cn}\\
\\\\Department of  Mathematics, Nanjing University \\
Nanjing 210093, P.R. China}
\date{}
\maketitle
\begin{abstract}
In the  paper,  we prove an abstract KAM (Kolmogorov-Arnold-Moser) theorem for infinite dimensional
reversible systems.  Using this KAM theorem, we obtain
 the existence and linear stability of
 quasi-periodic solutions for a class of reversible (non-Hamiltonian) coupled nonlinear Schr\"{o}dinger systems on $d-$torus $\mathbb{T}^d$.
\end{abstract}

\textbf{Keywords.} KAM theorem, reversible vector field, NLS, quasi-periodic solution.

\textbf{2010 Mathematics Subject Classification:} 37K55, 35B15.

\section[]{Introduction and Main Result}
\subsection{Introduction}

Among various techniques for studying the existence of quasi-periodic solutions of nonlinear partial differential
equations (PDEs), KAM theory is one of the most powerful  tools. Kuksin \cite{Kuksin87} and Wayne \cite{Wayne90} first developed
 Newtonian scheme to investigate quasi-periodic solutions of Hamiltonian PDEs in one dimensional space. The general idea is that Hamiltonian function is thought of as a normal form plus a real analytic perturbation, then constructing an infinite  symplectic
transformation sequences to make the perturbation smaller and smaller and construct a converged local normal form. The normal form is helpful to understand the dynamics around the quasi--periodic solutions, for example, one sees the linear stability and zero Lyapunov exponents.

The feasibility of KAM method in one dimensional Hamiltonian PDEs, however, depends crucially on the second Melnikov condition.  Due to multiple eigenvalues of linear operator, such condition is not naturally available in higher dimensional case, and KAM method is in general not easy to apply. In 1998, Bourgain \cite{Bourgain98} first made a breakthrough. He used  multi-scale analysis method to avoid the cumbersome second Melnikov condition and thus obtained small-amplitude quasi-periodic solutions of two dimensional
nonlinear Schr\"{o}dinger equations (NLS).  Later, he improved his method and studied  quasi-periodic solutions of NLS and nonlinear wave equations in
any  dimensional space. Following the idea and method in \cite{Bourgain98}, abundant works \cite{Bourgain05, Berti13b, WangW} have been done.

There are strong hopes to develop  KAM theory  for  higher dimensional PDEs, because multi-scale analysis can not help us to understand the dynamics
around quasi--periodic solutions.  Geng and You \cite{Geng06} first  built a KAM theorem for higher dimensional  beam equation and nonlocal smooth NLS. They used  momentum conservation  condition which means the nonlinearity is independent of spatial variable $x$ to avoid the difficulty of multiple eigenvalues. In 2010, Eliasson and Kuksin \cite{Eliasson10} studied a class of higher dimensional NLS with  convolution type potential and nonlinearity containing spatial variable $x$. They used the block diagonal normal form structure to
 deal with multiple eigenvalues of linear operator. Besides, they introduced Lipschitz domain property of perturbation to handle infinitely many resonances at each KAM step.  By developing  T\"{o}plitz-Lipschitz property of perturbation and  constructing appropriate tangential sites on $\mathbb{Z}^2$, Geng, Xu and You \cite{Geng11}  got the quasi-periodic solutions of two-dimensional completely resonant NLS . Later on, Geng and You \cite{Geng13}
simplified the proof of \cite{Eliasson10} via  momentum conservation  condition. C.Procesi and M.Procesi \cite{Procesi15} extended the result in \cite{Geng11} to the $d$-dimensional case by a very ingenious choice of tangential sites.  See also \cite{Eliasson16, Procesi13, Xu09, Geng06b} for further studies.

Recently, KAM theory for Hamiltonian PDEs has been generalized to reversible ones in one dimensional space \cite{Zhang11, Berti14}.
To the best of our knowledge, there is  not any KAM result for higher dimensional reversible PDEs yet.
In fact, reversible PDEs are  a class of physically important
PDEs as well as Hamiltonian ones. For example, the following coupled NLS system arising from nonlinear optics (see \cite{Newell92}):
 \begin{equation}\label{CNLS1}
                       \begin{cases}
                        \mi u_t-\Delta u+M_{\xi} u+\partial_{\bar{u}}G_1(|u|^2,|v|^2)=0,\quad                                  \\
                        \mi v_t-\Delta v+M_{\widetilde{\xi}}v+\partial_{\bar{v}}G_2(|u|^2,|v|^2)=0,\,x\in\mathbb{T}^d:=\mathbb{R}^d/{2\pi \mathbb{Z}^d},
                       \end{cases}
\end{equation}
 where  $M_{\xi}$ and  $M_{\widetilde{\xi}}$ are real Fourier multiplier,  $G_i=o(|u|^3+|v|^3),\,i=1,2$ are real analytic functions
near $(u,v)=(0,0).$
When $G_1=G_2,$  equation \eqref{CNLS1} is not only reversible (with respect to the  involution $S_0(u(x), v(x))$$=(\bar{u}(-x), \bar{v}(-x))$
) but also Hamiltonian, and  quasi-periodic solutions
for this case  were recently obtained via Hamiltonian KAM theory in \cite{Zhou}. When $G_1\neq G_2,$  equation \eqref{CNLS1}
is no longer  Hamiltonian but still reversible. This motives us  to  develop reversible KAM theory for equation \eqref{CNLS1}.

As in the Hamiltonian case, the major difficulty in constructing  KAM scheme for equation \eqref{CNLS1} is
also to deal with infinitely many resonances.
In this paper,  by introducing the class of T\"{o}plitz-Lipschitz  vector fields (inspired by \cite{Eliasson10, Geng11, Berti14}) and momentum conservation  condition, the difficulty can be overcome.
T\"{o}plitz-Lipschitz  vector field plays the most essential role and  it reduces  infinitely many resonances to only finitely many ones.
momentum conservation  condition can simplify the proof.
We mention that T\"{o}plitz-Lipschitz  vector field introduced here is  the generalization of  T\"{o}plitz-Lipschitz  functions in \cite{Geng11}.

Following \cite{Eliasson10},  we could study more general equation \eqref{CNLS1} with nonlinearities $G_i$ containing the spatial variable $x$ explicitly,
but the proof would be more complicated since we have to deal with block diagonal normal form.
Our present paper is working on NLS  with the external parameters, and the more interesting completely resonant case  (i.e. no Fourier  multiplier in equation \eqref{CNLS1}) will be in our forthcoming paper  \cite{GengLS}. As in the Hamiltonian case (see \cite{Geng11, Procesi15}), the construction of Birkhoff normal form will be  a  new challenge.

%

\subsection{Main result}

Let  $\mathcal{I}_1=\{i^{(1)}, i^{(2)},\cdots,i^{(n)}\}\subset \mathbb{Z}^d$ and  $\mathcal{I}_2=\{\tilde{i}^{(1)}, \tilde{i}^{(2)},\cdots,\tilde{i}^{(m)}\}\subset \mathbb{Z}^d$ be two sets of distinguished sites of Fourier modes. For some technical convenience, we suppose
 $0\in\mathcal{I}_1\cap \mathcal{I}_2.$
Denote
by $\lambda_i,\,i\in \mathbb{Z}^d$  ($resp.$  $\widetilde{\lambda}_i$)
 the eigenvalues of  $-\Delta+M_\xi$ ($resp.$ $-\Delta+M_{\widetilde{\xi}}$ ) under periodic boundary conditions:
$$\lambda_{i^{(j)}}=\omega_j=|i^{(j)}|^2+\xi_j,\,\,1\leq j\leq n,$$
$$\lambda_{i}=|i|^2,\,\,i\notin \mathcal{I}_1,$$
$$\widetilde{\lambda}_{\tilde{i}^{(j)}}=\widetilde{\omega}_j=|\tilde{i}^{(j)}|^2+\widetilde{\xi}_j,\,\,1\leq j\leq m,$$
$$\widetilde{\lambda}_{i}=|i|^2,\,\,i\notin \mathcal{I}_2,$$
and the corresponding eigenfunctions $\phi_i(x)=\frac{1}{\sqrt{(2\pi)^{d}}}\me^{\mi \langle i,x\rangle}.$
Assume the parameters
$(\xi, \tilde{\xi})\in \mathcal{O}:=[0,1]^n\times [0,1]^m\subset \mathbb{R}^{n+m}.$

Then we have the following main result.
\begin{theorem}\label{mainresult}
For any $0<\gamma\ll1,$
there exists a Cantor subset $\mathcal{O}_\gamma \subset \mathcal{O}$  with $\meas(\mathcal{O}\setminus \mathcal{O}_\gamma)=O(\gamma^{\frac 14})$,
such that for any $(\xi, \tilde{\xi})\in \mathcal{O}_\gamma ,$ equation \eqref{CNLS1} with reversible perturbation $G_1\neq G_2$ possesses a  small amplitude quasi-periodic solution of the form
 \begin{equation}\label{sol}
                       \begin{cases}
                       u(t,x)=\sum\limits_{i\in \mathbb{Z}^d}u_i(\acute{\omega}_1t,\cdots,\acute{\omega}_nt)\phi_i(x),\quad                                  \\
                        v(t,x)=\sum\limits_{i\in \mathbb{Z}^d}v_i(\acute{\tilde{\omega}}_1t,\cdots,\acute{\tilde{\omega}}_mt)\phi_i(x),
                       \end{cases}
\end{equation}
where $u_i:\mathbb{T}^n\rightarrow\mathbb{R}$ \emph{(}resp. $v_i:\mathbb{T}^m\rightarrow\mathbb{R}$ \emph{)}
and $\acute{\omega}_1,\cdots,\acute{\omega}_n$  \emph{(}resp. $\acute{\tilde{\omega}}_1,\cdots,\acute{\tilde{\omega}}_m$ \emph{)} are close to the unperturbed frequencies $\omega_1,\cdots,\omega_n$
\emph{(}resp. $\tilde{\omega}_1,\cdots,\tilde{\omega}_m$ \emph{)}.
Moreover,  the quasi-periodic solutions are real analytic and linearly stable.

\end{theorem}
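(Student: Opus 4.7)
The plan is to reduce Theorem \ref{mainresult} to the abstract reversible KAM theorem announced in the abstract, by recasting \eqref{CNLS1} as an infinite-dimensional reversible dynamical system in Fourier coordinates. First I would expand $u=\sum_{i\in\mathbb{Z}^d}q_i\phi_i$ and $v=\sum_{i\in\mathbb{Z}^d}p_i\phi_i$, turning \eqref{CNLS1} into a first-order system on $(q,p,\bar q,\bar p)$ of the form $\dot q_i=-\mi(\lambda_i q_i+\cdots)$, $\dot p_i=-\mi(\widetilde{\lambda}_i p_i+\cdots)$ on a weighted $\ell^2$-space. Because $G_1\neq G_2$ this system is not Hamiltonian, but the involution $S_0(u,v)(x)=(\bar u(-x),\bar v(-x))$ translates on Fourier coefficients into $S:(q_i,p_i)\mapsto(\bar q_i,\bar p_i)$, under which the vector field is reversible.

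Next I would introduce action-angle variables on the tangential sites, $q_{i^{(j)}}=\sqrt{I_j+y_j}\,\me^{-\mi\theta_j}$ for $1\leq j\leq n$ and analogously on $\mathcal{I}_2$, obtaining a reversible vector field $X=X_N+X_P$ on $\mathbb{T}^{n+m}\times\mathbb{R}^{n+m}\times\ell^2\times\ell^2$ in which $X_N$ has constant tangential frequencies $(\omega,\widetilde\omega)$ and a diagonal normal part with frequencies $\lambda_i,\widetilde\lambda_i$, and $X_P$ is a small real-analytic reversible perturbation. The key structural properties of $X_P$ to be verified, and then preserved along the iteration, are momentum conservation, which follows from the $x$-independence of $G_1,G_2$, and the Töplitz–Lipschitz property promised in the introduction (the vector-field analogue of the function-level notion of \cite{Geng11}).

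The hard part will be the KAM iteration itself. At each step I solve reversible homological equations of the schematic form $\partial_{(\omega,\widetilde\omega)}F-[F,X_N]=\Pi_{\mathrm{res}}X_P$ under the constraint that $F$ be $S$-anti-invariant (so that its time-one flow preserves reversibility), and then estimate the transformed perturbation. The second Melnikov condition $|\langle k,(\omega,\widetilde\omega)\rangle+\lambda_i\pm\lambda_j|\geq\gamma|k|^{-\tau}$ produces, for fixed $k$, infinitely many conditions in $(i,j)\in\mathbb{Z}^d\times\mathbb{Z}^d$ because of the multiplicity of $|i|^2$; this is where the Töplitz–Lipschitz structure does the heavy lifting, since along each direction of translation $(i,j)\mapsto(i+ct,j+ct)$ the matrix entries of $X_P$ admit a limit and the infinitely many resonances collapse to finitely many representative ones per step, with an error controllable by the Lipschitz defect. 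Preservation of the Töplitz–Lipschitz property under the non-symplectic reversible transformations generated by $F$ is the most delicate verification, and is the reversible analogue of the central estimate of \cite{Eliasson10,Geng11}.

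Once the iteration converges on a Cantor set of parameters $(\xi,\widetilde\xi)\in\mathcal{O}_\gamma$, summing the measures of the excluded resonance zones by the standard computation yields $\meas(\mathcal{O}\setminus\mathcal{O}_\gamma)=O(\gamma^{1/4})$. On $\mathcal{O}_\gamma$ the original vector field is conjugate to a reduced reversible normal form supported on an invariant torus with perturbed frequencies $(\acute\omega,\acute{\widetilde\omega})$ close to $(\omega,\widetilde\omega)$, whose flow translates back in the $(u,v)$-coordinates into a quasi-periodic solution of the form \eqref{sol}; real analyticity is inherited from the analytic framework at each KAM step, and linear stability follows because the linearized equation along the torus has constant coefficients with purely imaginary spectrum, as guaranteed by the diagonal reversible normal form.
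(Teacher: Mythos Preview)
Your outline matches the paper's strategy: rewrite \eqref{CNLS1} in Fourier lattice form, pass to action--angle variables on the tangential sites $\mathcal{I}_1,\mathcal{I}_2$, verify that the resulting reversible vector field satisfies the hypotheses of the abstract KAM theorem (regularity, momentum conservation, T\"oplitz--Lipschitz), and then invoke that theorem. Two refinements are needed, however.

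First, the vector field $F$ generating the KAM transformation must be $S$-\emph{invariant}, i.e.\ $F\circ S=DS\cdot F$, not $S$-anti-invariant; it is precisely invariance of $F$ that makes $\phi^1_F$ commute with $S$ and hence preserve reversibility of $X$. Second, and more substantively, because the system is \emph{coupled} the normal form does not stay diagonal along the iteration: averaging produces off-diagonal terms $[R^{z_jw_j}]w_j\partial_{z_j}$, $[R^{w_jz_j}]z_j\partial_{w_j}$ for $j\in\mathbb{Z}^d_1\cap\mathbb{Z}^d_2$, so the normal form is $N+\mathcal{A}$ with $2\times2$ blocks $M_j=\bigl(\begin{smallmatrix}\Omega_j&A_j\\\tilde A_j&\tilde\Omega_j\end{smallmatrix}\bigr)$. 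Consequently the scalar second-Melnikov conditions you wrote must be supplemented by determinant conditions of the form $|\det((\langle k,\omega\rangle+\langle\tilde k,\tilde\omega\rangle)I_4+M_i\otimes I_2\pm I_2\otimes M_j^T)|\geq\gamma K^{-\tau}$, and the homological equation on $\mathbb{Z}^d_1\cap\mathbb{Z}^d_2$ becomes a coupled $4\times4$ linear system rather than a scalar one. The T\"oplitz--Lipschitz machinery and the measure estimate then have to be carried out for these determinants; this is where the exponent $\gamma^{1/4}$ (rather than $\gamma$) in the measure bound comes from. Linear stability still follows, but from the block-diagonal reversible normal form, whose $2\times2$ blocks have real entries and hence purely imaginary spectrum.
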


The rest of the paper is organized as follows.
In Section 2, we give the definitions of weighted norms for functions and vector fields.
An  abstract   KAM theorem (Theorem \ref{KAM}) for infinite dimensional  reversible systems is presented in Section 3.
In Section 4, we use the KAM theorem to prove Theorem \ref{mainresult}.
The proof of  Theorem \ref{KAM} is given in  Section 5.
Some properties of reversible system and technical lemmas are listed in the Appendix.

\section[]{Preliminary}

For the sake of completeness, we first introduce some definitions and notations.

 Let $\mathcal{I}\subset \mathbb{Z}^d$ be a  finite subset  and $\rho> 0$, we introduce the Banach space $\ell^{\rho}_{\mathcal{I}}$ of all complex sequences
$z=(z_j)_{j\in \mathbb{Z}^d\setminus \mathcal{I}}$ with
$$\|z\|_\rho=\sum\limits_{j\in\mathbb{Z}^d\setminus \mathcal{I}}\me^{|j|\rho}|z_j|<\infty,$$
where $|j|=\sqrt{|j_1|^2+\cdots+|j_d|^2}$.

Given two finite subsets of $\mathbb{Z}^d:$ $\mathcal{I}_1=\{i^{(1)}, i^{(2)},\cdots,i^{(n)}\}$ and  $\mathcal{I}_2=\{\tilde{i}^{(1)}, \tilde{i}^{(2)},\cdots,\tilde{i}^{(m)}\}$.  Denote $\mathbb{Z}^d_l:=\mathbb{Z}^d\setminus \mathcal{I}_l$ and $\ell^{\rho}_{l}:=\ell^{\rho}_{\mathcal{I}_l},\,(l=1,2).$
Consider the  phase space $$\mathscr{P}_{\rho}:=\mathbb{T}^n\times\mathbb{T}^{m}\times\mathbb{R}^n\times\mathbb{R}^{m} \times \ell^{\rho}_{1}\times \ell^{\rho}_{2}\times \ell^{\rho}_{1}\times \ell^{\rho}_{2}\ni y:=(\theta, \varphi, I, J, z, w, \bar{z}, \bar{w} ).$$
We introduce a complex neighborhood
\begin{equation*}
  \begin{split}
  D_\rho(r,s)=&\{y:|\textmd{Im} \theta|<r, |\textmd{Im} \varphi|<r, |I|<s, |J|<s,\\
   &\|z\|_{\rho}<s, \|w\|_{\rho}<s,\|\bar{z}\|_{\rho}<s,\|\bar{w}\|_{\rho}<s\}
 \end{split}
\end{equation*}
of $\mathcal{T}^{n+m}_0:=\mathbb{T}^{n}\times \mathbb{T}^{m}\times \{I=0\}\times\{J=0\}\times\{z=0\}\times\{w=0\}\times\{\bar{z}=0\}\times\{\bar{w}=0\},$
where $|\cdot|$ is the sup-norm for vectors.

Suppose $\mathcal{O} \subset \mathbb{R}^{n+m}$ is a compact parameter subset. A function $f:D_\rho(r,s)\times \mathcal{O} \rightarrow \mathbb{C}$   is real analytic in
$y$ and $C^4_W$ (i.e., $C^4-$smooth in the sense of Whitney) in $\zeta\in \mathcal{O}$  and has
Taylor-Fourier series expansion
\begin{equation*}
  \begin{split}
        f(y ; \zeta)
            =&\sum\limits_{k\in \mathbb{Z}^n, l\in \mathbb{N}^n,\alpha, \beta\in \mathbb{N}^{\mathbb{Z}^d_1},\atop \tilde{k}\in \mathbb{Z}^m, \tilde{l}\in \mathbb{N}^m,\tilde{\alpha}, \tilde{\beta}\in \mathbb{N}^{\mathbb{Z}^d_2}}f_{kl\alpha \beta,\tilde{k}\tilde{l}\tilde{\alpha}\tilde{\beta}}(\zeta)\me^{\mi (\langle k, \theta\rangle+\langle \tilde{k}, \varphi\rangle)}I^lJ^{\tilde{l}} z^{\alpha}\bar{z}^{\beta}w^{\tilde{\alpha}}\bar{w}^{\tilde{\beta}} ,
   \end{split}
\end{equation*}
where $\langle k,\theta\rangle=\sum\limits^n_{i=1}k_i\theta_i,$ $I^{l}=\prod^n\limits_{i=1}I^{l_i}_i$ and $z^{\alpha}\bar{z}^{\beta}=\prod\limits_{j\in\mathbb{Z}^d_1}z^{\alpha_j}_j\bar{z}^{\beta_j}_j.$  $\alpha, \beta$ have only finitely many nonzero components, and
similarly for the other indexes.
 We define the weighted norm of $f$ as follows
\begin{equation*}
  \begin{split}
        \|f\|_{D_\rho(r,s)\times\mathcal {O}}
            =&\sup\limits_{|z|_\rho<s,|\bar{z}|_\rho<s\atop |w|_\rho<s,|\bar{w}|_\rho<s} \sum\limits_{k,l,\alpha, \beta,\atop \tilde{k},\tilde{l},\tilde{\alpha},\tilde{\beta}}|f_{kl\alpha \beta,\tilde{k}\tilde{l}\tilde{\alpha}\tilde{\beta}}|_{\mathcal{O}}\me^{(|k|+|\tilde{k}|)r}s^{(|l|+|\tilde{l}|)}|z^{\alpha}||\bar{z}^{\beta}||w^{\tilde{\alpha}}||\bar{w}^{\tilde{\beta}}|.
   \end{split}
\end{equation*}
where $|f_{kl\alpha \beta,\tilde{k}\tilde{l}\tilde{\alpha}\tilde{\beta}}|_{\mathcal{O}}=\sup\limits_{\zeta\in \mathcal{O}}\sum\limits_{0\leq b\leq4}|\partial^b_\zeta f_{kl\alpha \beta,\tilde{k}\tilde{l}\tilde{\alpha}\tilde{\beta}}|.$

Let
\begin{equation*}
z^\varrho_j=
         \begin{cases}
           z_j,\quad                                      &  \varrho=+,\\
          \bar{z}_j,\quad           &   \varrho=- ,
         \end{cases}
\end{equation*}
and similarly for $z^\varrho=(z^\varrho_j)_{j\in \mathbb{Z}^d_1},$ $w^\varrho_j$ and $w^\varrho$.

Consider a vector field $X(y), y\in D_\rho(r,s):$
\begin{equation}\label{pdoperator}
  \begin{split}
        X(y)=&X^{(\theta)}(y)\frac{\partial}{\partial \theta}+X^{(\varphi)}(y)\frac{\partial}{\partial \varphi}+X^{(I)}(y)\frac{\partial}{\partial I}+X^{(J)}(y)\frac{\partial}{\partial J}\\
        &+ X^{(z)}(y)\frac{\partial}{\partial z}+ X^{(w)}(y)\frac{\partial}{\partial w}+X^{(\bar{z})}(y)\frac{\partial}{\partial \bar{z}}+X^{(\bar{w})}(y)\frac{\partial}{\partial \bar{w}}\\
            =&\sum\limits_{\textsf{v}\in \mathscr{V}}X^{(\textsf{v})}(y)\frac{\partial}{\partial \textsf{v}},
   \end{split}
\end{equation}
where
 $\mathscr{V}=\{\theta_a, \varphi_b,\,z_i, w_j,\bar{z}_i,\bar{w}_j: a=1,\cdots,n;b=1,\cdots,m;i\in\mathbb{Z}^d_1;j\in\mathbb{Z}^d_2\}.$
Suppose $X$ is  real analytic  in $y$ and depends $C^4_W$ smoothly on parameters $\zeta\in \mathcal {O},$
we define the weighted norm of $X$ as follows\footnote{The norm of vector valued function $G:D_\rho(r,s)\times\mathcal {O}\rightarrow \mathbb{C}^n$, $n<\infty,$ is defined as $\|G\|_{D_\rho(r,s)\times\mathcal {O}}=\sum\limits^n_{b=1}\|G_b\|_{D_\rho(r,s)\times\mathcal {O}}$.}
\begin{equation*}
  \begin{split}
\|X\|_{s;D_\rho(r,s)\times\mathcal {O}}=&\|X^{(\theta)}\|_{D_\rho(r,s)\times\mathcal {O}}+\|X^{(\varphi)}\|_{D_\rho(r,s)\times\mathcal {O}}\\
&+\frac{1}{s}\|X^{(I)}\|_{D_\rho(r,s)\times\mathcal {O}}+\frac{1}{s}\|X^{(J)}\|_{D_\rho(r,s)\times\mathcal {O}}\\
&+\frac{1}{s}\sum\limits_{\sigma=\pm}(\sum\limits_{i\in \mathbb{Z}^d_1}\me^{|i|\rho}\|X^{(z^\sigma_i)}\|_{D_\rho(r,s)\times\mathcal {O}}+\sum\limits_{i\in \mathbb{Z}^d_2}\me^{|i|\rho}\|X^{(w^\sigma_i)}\|_{D_\rho(r,s)\times\mathcal {O}}).
 \end{split}
\end{equation*}

The Lie bracket of two vector fields $X$ and $Y$ is defined as
  $[X,Y]=Y X-X Y.$

\section[]{A  KAM Theorem for Infinite Dimensional Reversible Systems}

In this section, we give an abstract KAM theorem for infinite dimensional reversible systems.
The definition and properties of reversible system are listed in the Appendix.

Given an involution $S:(\theta,\varphi,I, J, z,w,\bar{z},\bar{w})\mapsto(-\theta,-\varphi,I, J, \bar{z},\bar{w},z,w).$
We consider a family of  $S-$reversible vector fields
 $$X^0(y;\zeta)=N(y;\zeta)+\mathcal{A}(y;\zeta)$$
\begin{equation}\label{VFN}
\begin{split}
  N=&\omega(\zeta)\frac{\partial}{\partial \theta}+\tilde{\omega}(\zeta)\frac{\partial}{\partial \varphi}+\mi\Omega(\zeta) z\frac{\partial}{\partial z}
  +\mi\widetilde{\Omega}(\zeta)w\frac{\partial}{\partial w}-\mi\Omega(\zeta)\bar{z}\frac{\partial}{\partial \bar{z}}-\mi\widetilde{\Omega}(\zeta)\bar{w}\frac{\partial}{\partial \bar{w}}\\
  =&\sum^{n}_{b=1}\omega_b(\zeta)\frac{\partial}{\partial \theta_b}+\sum^{m}_{b=1}\tilde{\omega}_b(\zeta)\frac{\partial}{\partial \varphi_b}\\
  &+\sum_{j\in \mathbb{Z}^d_1}(\mi\Omega_j(\zeta) z_j\frac{\partial}{\partial z_j}-\mi\Omega_j(\zeta)\bar{z}_j\frac{\partial}{\partial \bar{z}_j})
  +\sum_{j\in \mathbb{Z}^d_2}(\mi\widetilde{\Omega }_j(\zeta)w_j\frac{\partial}{\partial w_j}-\mi\widetilde{\Omega}_j(\zeta)\bar{w}_j\frac{\partial}{\partial \bar{w}_j}),
 \end{split}
\end{equation}
\begin{equation}\label{VFA}
  \begin{split}
  \mathcal{A}=&\mi A(\zeta) w\frac{\partial}{\partial z}
  +\mi\widetilde{A}(\zeta)z\frac{\partial}{\partial w}-\mi A(\zeta)\bar{w}\frac{\partial}{\partial \bar{z}}-\mi\widetilde{A}(\zeta)\bar{z}\frac{\partial}{\partial \bar{w}}\\
  =&\sum_{j\in \mathbb{Z}^d_1\cap \mathbb{Z}^d_2}(\mi A_j(\zeta)w_j\frac{\partial}{\partial z_j}+\mi \tilde{A}_j(\zeta)z_j\frac{\partial}{\partial w_j}-\mi A_j(\zeta)\bar{w}_j\frac{\partial}{\partial \bar{z}_j}-\mi \tilde{A}_j(\zeta)\bar{z}_j\frac{\partial}{\partial \bar{w}_j}),
   \end{split}
\end{equation}
where $\omega_b, \tilde{\omega}_b, \Omega_j,\,\tilde{\Omega}_j,\,A_j,\,\tilde{A}_j\in \mathbb{R}$ and $A_j=0,(j\in \mathbb{Z}^d_1\setminus \mathbb{Z}^d_2),$
$\tilde{A}_j=0,(j\in \mathbb{Z}^d_2\setminus \mathbb{Z}^d_1).$

For each $\zeta\in \mathcal{O}$, the motion equation governed by the vector field $X^0$ is
\begin{equation*}
    \begin{cases}
       \dot{\theta}=\omega,\,\,\dot{\varphi}=\tilde{\omega},\\
         \dot{I}=0,\,\,\dot{J}=0,\\
         \dot{z}^{\sigma}_j=\sigma \mi\Omega_j z^{\sigma}_j,\quad  &\sigma=\pm,\,j\in\mathbb{Z}^d_1\setminus\mathbb{Z}^d_2,\\
          \dot{w}^{\sigma}_j=\sigma \mi \tilde{\Omega}_j w^{\sigma}_j,\quad  & j\in\mathbb{Z}^d_2\setminus\mathbb{Z}^d_1,\\
          \left(
            \begin{array}{c}
              \dot{z}^{\sigma}_j \\
              \dot{w}^{\sigma}_j \\
            \end{array}
          \right)=\sigma \mi\left(
                               \begin{array}{cc}
                                 \Omega_j & A_j \\
                                 \tilde{A}_j & \tilde{\Omega}_j \\
                               \end{array}
                             \right)\left(
            \begin{array}{c}
              z^{\sigma}_j \\
              w^{\sigma}_j \\
            \end{array}
          \right),\quad  & j\in\mathbb{Z}^d_1\cap\mathbb{Z}^d_2.\\
\end{cases}
\end{equation*}
Obviously, $\{(\theta+\omega t, \varphi+\tilde{\omega} t, 0,0,0,0,0,0,):t\in \mathbb{R}\}$ forms an invariant torus of the above system.

Consider now the perturbed $S-$reversible vector field
\begin{equation}\label{perbVF}
    X=X^0+P=N+\mathcal{A}+P(y;\zeta).
\end{equation}
We will prove that, for typical  (in the sense of Lebesgue measure) $\zeta\in \mathcal{O}$, the vector fields \eqref{perbVF} still admit
 invariant tori for sufficiently small $P.$   For this purpose, we need the following six assumptions:
 \begin{description}
   \item[(A1) Non-degeneracy: ]  The map $\zeta\mapsto(\omega(\zeta), \tilde{\omega}(\zeta))$ is a $C^4_W$ diffeomorphism between $\mathcal{O}$ and its image.
   \item[(A2) Asymptotics of normal frequencies:]
   \begin{equation}\label{AsyNF}
   \Omega_j=|j|^2+\Omega^0_j,\,\,j\in \mathbb{Z}^d_1,\,\,\tilde{\Omega}_j=|j|^2+\tilde{\Omega}^0_j,\,\,j\in \mathbb{Z}^d_2,
   \end{equation}
where    $\Omega^0_j,\,\tilde{\Omega}^0_j\in C^4_W(\mathcal{O})$ with $C^4_W-$norm bounded by some small
positive constant $L.$
 \item[(A3) Non-resonance conditions: ]
Denote
\begin{equation*}
    M_j\:=\left(
          \begin{array}{cc}
            \Omega_j & A_j \\
            \tilde{A}_j & \tilde{\Omega}_j \\
          \end{array}
        \right),\,j\in\mathbb{Z}^d_1\cap\mathbb{Z}^d_2.
\end{equation*}

Suppose $ A_j,  \tilde{A}_j\in C^4_W(\mathcal{O})$ and there exist $\gamma, \tau>0, $ such that
\begin{equation}\label{Mel1}
   |\langle k,\omega\rangle+\langle \tilde{k},\tilde{\omega}\rangle|\geq\frac{\gamma}{(|k|+|\tilde{k}|)^\tau},\,\, (k,\tilde{k})\neq 0,
\end{equation}
\begin{equation}\label{Mel2}
  |\langle k,\omega\rangle+\langle \tilde{k},\tilde{\omega}\rangle + \Omega_i|\geq\frac{\gamma}{(|k|+|\tilde{k}|)^\tau},i\in \mathbb{Z}^d_1\setminus\mathbb{Z}^d_2
\end{equation}
\begin{equation}\label{Mel3}
  |\langle k,\omega\rangle+\langle \tilde{k},\tilde{\omega}\rangle+\tilde{\Omega}_i|\geq\frac{\gamma}{(|k|+|\tilde{k}|)^\tau},i\in \mathbb{Z}^d_2\setminus\mathbb{Z}^d_1,
\end{equation}
\begin{equation}\label{Mel4}
  |\langle k,\omega\rangle+\langle \tilde{k},\tilde{\omega}\rangle+ \Omega_i\pm \Omega_j|\geq\frac{\gamma}{(|k|+|\tilde{k}|)^\tau}, (k,\tilde{k})\neq 0,i,j\in \mathbb{Z}^d_1\setminus\mathbb{Z}^d_2,
\end{equation}
\begin{equation}\label{Mel5}
  |\langle k,\omega\rangle+\langle \tilde{k},\tilde{\omega}\rangle+ \Omega_i\pm \tilde{\Omega}_j|\geq\frac{\gamma}{(|k|+|\tilde{k}|)^\tau},i\in \mathbb{Z}^d_1\setminus\mathbb{Z}^d_2 ,j\in \mathbb{Z}^d_2\setminus\mathbb{Z}^d_1,
\end{equation}
\begin{equation}\label{Mel6}
  |\langle k,\omega\rangle+\langle \tilde{k},\tilde{\omega}\rangle+ \tilde{\Omega}_i\pm \tilde{\Omega}_j|\geq\frac{\gamma}{(|k|+|\tilde{k}|)^\tau}, (k,\tilde{k})\neq 0,i,j\in \mathbb{Z}^d_2\setminus\mathbb{Z}^d_1,
\end{equation}
\begin{equation}\label{Mel7}
  |\det((\langle k,\omega\rangle+\langle \tilde{k},\tilde{\omega}\rangle)I_2\pm M_i)|\geq\frac{\gamma}{(|k|+|\tilde{k}|)^\tau},i\in \mathbb{Z}^d_1\cap\mathbb{Z}^d_2,
\end{equation}
\begin{equation}\label{Mel11}
  |\det((\langle k,\omega\rangle+\langle \tilde{k},\tilde{\omega}\rangle+ \Omega_i)I_2\pm M_j)|\geq\frac{\gamma}{(|k|+|\tilde{k}|)^\tau},
\end{equation}
$(i,j)\ \hbox{or}\ (j,i)\in (\mathbb{Z}^d_1\cap\mathbb{Z}^d_2)\times (\mathbb{Z}^d_1\setminus\mathbb{Z}^d_2),$
\begin{equation}\label{Mel12}
  |\det((\langle k,\omega\rangle+\langle \tilde{k},\tilde{\omega}\rangle+ \tilde{\Omega}_i)I_2\pm M_j)|\geq\frac{\gamma}{(|k|+|\tilde{k}|)^\tau},
\end{equation}
$(i,j)\ \hbox{or}\ (j,i)\in (\mathbb{Z}^d_1\cap\mathbb{Z}^d_2)\times (\mathbb{Z}^d_2\setminus\mathbb{Z}^d_1),$
\begin{equation}\label{Mel13}
  |\det((\langle k,\omega\rangle+\langle \tilde{k},\tilde{\omega}\rangle)I_{4}+ M_i\otimes I_2\pm I_2\otimes M^T_j)|\geq\frac{\gamma}{(|k|+|\tilde{k}|)^\tau},\,\, (k,\tilde{k})\neq 0,i,j\in \mathbb{Z}^d_1\cap\mathbb{Z}^d_2.
\end{equation}
where $I_b$ is $b\times b$ identity matrix. $\det(\cdot)$, $\otimes $  and $(\cdot)^T$ denotes the determinant, the tensor product and the  transpose of matrices, respectively.

 \item[(A4) Regularity: ] $\mathcal{A}+P$ is real analytic in $y$ and $C^4_W-$smooth in $\zeta$. Moreover,
 $\|\mathcal{A}\|_{s;D(r,s)\times\mathcal{O}}<1,$ $\varepsilon_0:=\|P\|_{s;D(r,s)\times\mathcal{O}}<\infty.$

 \item[(A5) Momentum conservation: ]
 The perturbation $P$ satisfies $[P, \mathbb{M}_l]=0,\,\,(l=1,\cdots,d),$ where
\begin{equation*}
    \mathbb{M}_l=\sum^n_{b=1}i^{(b)}_l\frac{\partial}{\partial \theta_b}+\sum^{m}_{b=1}\tilde{i}^{(b)}_l\frac{\partial}{\partial \varphi_b}+ \sum_{\varrho=\pm} \sum_{j\in \mathbb{Z}^d_1}\varrho \mi j_lz^{\varrho}_j\frac{\partial}{\partial z^{\varrho}_{j}}+ \sum_{\varrho=\pm} \sum_{j\in \mathbb{Z}^d_2}\varrho \mi j_lw^{\varrho}_j\frac{\partial}{\partial w^{\varrho}_{j}}.
\end{equation*}

\item[(A6) T\"{o}plitz-Lipschitz property: ]
Let
$$\Lambda=\sum\limits_{\sigma=\pm}\sigma\mi(\sum\limits_{j\in \mathbb{Z}^d_1}\Omega^0_j(\zeta) z^\sigma_j\frac{\partial}{\partial z^\sigma_j}
  +\sum\limits_{j\in \mathbb{Z}^d_2}\widetilde{\Omega }^0_j(\zeta)w^\sigma_j\frac{\partial}{\partial w^\sigma_j}).$$
For  fixed $i,j\in \mathbb{Z}^d$ $c\in \mathbb{Z}^d\setminus\{0\}, $ the following limits exist and satisfy:
\begin{equation}\label{tl0}
     \|\lim_{t\rightarrow\infty}\frac{\partial P^{(x)}}{\partial u^\sigma_{i+tc} }\|_{s;D_\rho(r,s)\times\mathcal{O}}\leq \varepsilon_0, \,\,\,\,x=\theta_b, \varphi_b, I_b,J_b, u=z, w.
\end{equation}
\begin{equation}\label{tl1}
\|\lim_{t\rightarrow\infty}\frac{\partial (\Lambda+P)^{(u^\sigma_{i+tc})}}{\partial u^{\pm\sigma}_{j\pm tc}}\|_{s;D_\rho(r,s)\times\mathcal{O}}
\leq \varepsilon_0\me^{-|i\mp j|\rho},\,\,u=z,w.
\end{equation}

\begin{equation}\label{tl2}
\|\lim_{t\rightarrow\infty}\frac{\partial (\mathcal{A}+P)^{(u^\sigma_{i+tc})}}{\partial v^{\pm\sigma}_{j\pm tc}}\|_{s;D_\rho(r,s)\times\mathcal{O}}\leq \varepsilon_0\me^{-|i\mp j|\rho},\,\,\,(u,v)=(z,w),(w,z).
\end{equation}

Furthermore, there exists $K>0$  such that when  $|t|>K,$ the following estimates hold.
\begin{equation}\label{tl3}
   \|\frac{\partial P^{(x)}}{\partial u^\sigma_{i+tc} }-\lim_{t\rightarrow\infty}\frac{\partial P^{(x)}}{\partial u^\sigma_{i+tc} }\|_{s;D_\rho(r,s)\times\mathcal{O}}\leq \varepsilon_0,
\end{equation}
$x=\theta_b, \varphi_b, I_b,J_b; u=z, w.$
\begin{equation}\label{tl4}
\|\frac{\partial (\Lambda+P)^{(u^{\sigma}_{i+tc})}}{\partial u^{\pm\sigma}_{j\pm tc}}-\lim_{t\rightarrow\infty}\frac{\partial (\Lambda+P)^{(u^{\sigma}_{i+tc})}}{\partial u^{\pm\sigma}_{j\pm tc}}\|_{s;D_\rho(r,s)\times\mathcal{O}}\leq \frac{\varepsilon_0}{|t|}\me^{-|i\mp j|\rho},
\end{equation}
$u=z,w.$
\begin{equation}\label{tl5}
\|\frac{\partial (\mathcal{A}+P)^{(u^\sigma_{i+tc})}}{\partial v^{\pm\sigma}_{j\pm tc}}-\lim_{t\rightarrow\infty}\frac{\partial (\mathcal{A}+P)^{(u^\sigma_{i+tc})}}{\partial v^{\pm\sigma}_{j\pm tc}}\|_{s;D_\rho(r,s)\times\mathcal{O}}\leq \frac{\varepsilon_0}{|t|}\me^{-|i\mp j|\rho},
\end{equation}
$(u,v)=(z,w), (w,z).$
 \end{description}

\begin{remark}
In (A6), the conditions \eqref{tl1}-\eqref{tl2} and \eqref{tl4}-\eqref{tl5} are the most important  for measure estimates. The
role played by the conditions \eqref{tl0} and \eqref{tl3}  is to preserve T\"{o}plitz-Lipschitz property   after the KAM iteration
(see Lemmas \ref{LieBTL} and \ref{PTL}).
\end{remark}

Now we state our KAM theorem.
\begin{theorem}\label{KAM}
Suppose the $S-$reversible vector field $X=N+\mathcal{A}+P$ in \eqref{perbVF} satisfies $(A1)-(A6).$
$\gamma>0$ is small enough. Then there exists a positive $\varepsilon$ depending only on $n, m, L, K, \tau, r, s$
and $\rho$ such that if $\|P\|_{s;D(r,s)\times\mathcal{O}}\leq\varepsilon,$   the following holds: There exist (1)
 a Cantor subset $\mathcal{O}_\gamma\subset\mathcal{O}$ with Lebesgue measure $\meas(\mathcal{O}\setminus\mathcal{O}_\gamma)=O(\gamma^{1/4});$
(2) a $C^4_W-$smooth family of  real analytic  torus embeddings
$$\Psi:\mathbb{T}^{n+m}\times\mathcal{O}_\gamma\rightarrow D_\rho(r,s)$$
which is $\frac{\varepsilon}{\gamma^{20}}-$close to the trivial embedding
$\Psi_0:\mathbb{T}^{n+m}\times\mathcal{O}\rightarrow \mathcal{T}^{n+m}_0;$
(3) a $C^4_W-$smooth  map $\phi:\mathcal{O}_\gamma\rightarrow \mathbb{R}^{n+m}$
which is  $\varepsilon-$close to the unperturbed frequency $(\omega, \tilde{\omega})$
such that for every $\zeta\in \mathcal{O}_\gamma$ and $(\theta,\varphi)\in\mathbb{T}^{n+m}$
the curve $t\mapsto \Psi((\theta,\varphi)+\phi(\zeta)t; \zeta)$ is a quasi-periodic solution of the equation governed by
the  vector field $X=N+\mathcal{A}+P.$
\end{theorem}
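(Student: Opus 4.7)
The plan is to set up a standard Newton-style KAM iterative scheme adapted to the reversible, block-diagonal normal form. At the $\nu$-th step I assume an $S$-reversible vector field $X_\nu = N_\nu + \mathcal{A}_\nu + P_\nu$ with $\|P_\nu\|_{s_\nu; D_{\rho_\nu}(r_\nu, s_\nu) \times \mathcal{O}_\nu} \leq \varepsilon_\nu$ on a shrinking sequence of domains, and seek a near-identity change of variables $\Phi_\nu$, realized as the time-$1$ flow of an anti-$S$-reversible vector field $F_\nu$, such that the pushforward $(\Phi_\nu)_* X_\nu = N_{\nu+1} + \mathcal{A}_{\nu+1} + P_{\nu+1}$ has perturbation of size $\varepsilon_{\nu+1} \approx \varepsilon_\nu^{4/3}$. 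Requiring $F_\nu$ to be anti-reversible guarantees that $\Phi_\nu$ maps $S$-reversible vector fields to $S$-reversible ones, and an analogous commutator check shows that momentum conservation (A5) is propagated.

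The homological equation $[N_\nu, F_\nu] = R_\nu - \hat{N}_\nu - \hat{\mathcal{A}}_\nu$, where $R_\nu$ is a finite-order truncation of $P_\nu$ in $(k,\tilde k, I, J, z, \bar z, w, \bar w)$ and $\hat{N}_\nu, \hat{\mathcal{A}}_\nu$ absorb the average terms into the next normal form, decouples by Fourier--Taylor modes into scalar or $2\times 2$ matrix equations. For indices in $\mathbb{Z}^d_1\setminus\mathbb{Z}^d_2$ or $\mathbb{Z}^d_2\setminus\mathbb{Z}^d_1$ the small divisors are exactly those of \eqref{Mel1}--\eqref{Mel6}, while for indices in the intersection $\mathbb{Z}^d_1\cap\mathbb{Z}^d_2$ the coupling $\mathcal{A}_\nu$ forces a genuine matrix equation whose solvability is controlled by the determinant conditions \eqref{Mel7}--\eqref{Mel13}; the tensor form in \eqref{Mel13} arises from the combined $(z,w)\otimes(\bar z, \bar w)$-block of the linearized flow on the normal direction. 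Momentum conservation (A5) is essential here: it forces the nonzero modes appearing in $R_\nu$ to satisfy a linear relation among the indices, effectively collapsing the infinitely many pairings $(i,j)$ to be controlled into a finite family once $(k,\tilde k)$ is fixed, and, together with the Diophantine conditions on $(\omega,\tilde\omega)$, it restricts summation in the new normal form $\hat N_\nu$ to finitely many terms.

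The main obstacle I anticipate is the simultaneous preservation of the T\"oplitz-Lipschitz property (A6) and the execution of the measure estimates. For preservation, one must verify that the solution $F_\nu$ of the homological equation inherits the bounds \eqref{tl0}--\eqref{tl5} from $P_\nu$, and that the Lie-bracket terms appearing in $P_{\nu+1}$ retain the same structure; the asymptotic-remainder estimates \eqref{tl3}--\eqref{tl5} with rate $1/|t|$ are precisely the input needed to close this iteration, as indicated in the Remark. For the measure estimates, the key observation is that along any direction $c\in\mathbb{Z}^d\setminus\{0\}$ the small-divisor quantities in (A3) converge, as $|t|\to\infty$, to a limit with error $O(\varepsilon_0/|t|)$. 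Consequently, for fixed $(k,\tilde k)$ the family of resonance conditions indexed by the shift $t$ splits into a finite bad set for $|t|\leq K$ plus a single limiting asymptotic condition, reducing the number of independent Melnikov inequalities to be verified from infinitely many to finitely many per $(k,\tilde k)$; the standard argument then bounds each excluded set by $O(\gamma/(1+|k|+|\tilde k|)^\tau)$, and summing over $(k,\tilde k)$ and the finite set of "effective" indices yields the total $\operatorname{meas}(\mathcal{O}\setminus\mathcal{O}_\gamma)=O(\gamma^{1/4})$.

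Once the iterative step and its measure estimate are in hand, the composition $\Psi=\lim_{\nu\to\infty}\Phi_1\circ\Phi_2\circ\cdots\circ\Phi_\nu$ converges in the Whitney $C^4$ topology on $\mathcal{O}_\gamma:=\bigcap_\nu \mathcal{O}_\nu$, mapping the reference torus $\mathcal{T}^{n+m}_0$ into $D_\rho(r,s)$ and conjugating $X$ to a limit normal form $N_\infty+\mathcal{A}_\infty$ on its image. The limiting frequency $\phi(\zeta)=(\omega_\infty(\zeta),\tilde\omega_\infty(\zeta))$ is obtained as the uniform limit of the $C^4_W$-smooth frequencies at each step and is $\varepsilon$-close to $(\omega,\tilde\omega)$. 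Reversibility together with the purely imaginary spectrum of the linearized normal flow at the invariant torus (inherited from (A2) and the preserved block structure of $\mathcal{A}_\infty$) yields the linear stability asserted in the conclusion.
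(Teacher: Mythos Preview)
Your overall scheme matches the paper's proof: Newton KAM iteration via the time-$1$ flow of a vector field $F_\nu$, homological equation solved mode-by-mode using the small divisors in (A3), propagation of (A5)--(A6) through Lie brackets, and measure estimate via the T\"oplitz--Lipschitz structure. Two corrections are needed.

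First, a slip in the homological equation: it must read $[N_\nu+\mathcal{A}_\nu, F_\nu]+R_\nu=[R_\nu]$, with $\mathcal{A}_\nu$ included on the left. You clearly intend this, since you correctly identify the determinant divisors \eqref{Mel7}--\eqref{Mel13} as arising from the $\mathcal{A}$-coupling, but the bracket $[N_\nu,F_\nu]$ alone would not produce those matrix equations. Also, the required condition on $F_\nu$ is $S$-\emph{invariance} ($F\circ S=DS\cdot F$), not ``anti-reversibility''; this is what makes $\phi^1_{F_\nu}$ commute with $S$ and hence preserve $S$-reversibility under pullback.

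Second, and this is a genuine gap, your measure-estimate sketch only works for $d=2$. Momentum conservation fixes $i-j$ (or $i+j$) in terms of $(k,\tilde k)$ but still leaves $i$ ranging over all of $\mathbb{Z}^d$, and there are infinitely many directions $c$ along which to run your one-parameter T\"oplitz--Lipschitz argument; summing the excised sets over all such $c$ diverges. The paper's actual argument (adapted from \cite{Geng13}) has two ingredients you are missing. First, a lattice reduction: for $|i-j|\le K_\nu$, either $|\det D_\nu|\ge 1$ automatically, or one can write $i=i_0+\sum_{l=1}^{d-1}t_l c_l$ and $j=j_0+\sum_{l=1}^{d-1}t_l c_l$ with $|i_0|,|j_0|,|c_l|\le 3K_\nu^2$, so that only $O(K_\nu^{2d(d+1)})$ choices of base data $(i_0,j_0,c_1,\ldots,c_{d-1})$ occur. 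Second, a hierarchical induction over $t_1,\ldots,t_{d-1}$: at the $l$-th layer one excises the set where $\bigl|\lim_{t_l,\ldots,t_{d-1}\to\infty}\det D_\nu\bigr|<\gamma K_\nu^{-l!\tau/d!}$, of measure at most $\gamma^{1/4}K_\nu^{-l!\tau/d!}$, and then uses the $1/|t_l|$ T\"oplitz--Lipschitz rate to cover $|t_l|>K_\nu^{l!\tau/d!+4}$ on the complement. Your single-$t$ splitting is precisely the last layer of this cascade; the preceding $d-2$ layers, together with the lattice lemma, are what make the total excised measure summable and what force the constraint $\tau>d!\bigl(2d(d+1)+n+m+1\bigr)+\frac{4(d-1)(d+1)!}{(d-1)!(d+1)-1}$ on the Diophantine exponent.
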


\section[]{Application to the Coupled NLS}
\subsection{Lattice form of equation \eqref{CNLS1}}

Let  $\mathcal{I}_1=\{i^{(1)}, i^{(2)},\cdots,i^{(n)}\}\subset \mathbb{Z}^d$ and  $\mathcal{I}_2=\{\tilde{i}^{(1)}, \tilde{i}^{(2)},\cdots,\tilde{i}^{(m)}\}\subset \mathbb{Z}^d$ and $0\in\mathcal{I}_1\cap \mathcal{I}_2.$
Under periodic boundary conditions, we denote   the
eigenvalues of  $-\Delta+M_\xi$ and $-\Delta+M_{\widetilde{\xi}}$ by $\lambda_i,\,i\in \mathbb{Z}^d$  and $\widetilde{\lambda}_i,\,i\in \mathbb{Z}^d$, respectively, satisfying
$$\omega_j=\lambda_{i^{(j)}}=|i^{(j)}|^2+\xi_j,\,\,1\leq j\leq n,$$
$$\Omega_i=\lambda_{i}=|i|^2,\,\,i\notin \mathcal{I}_1,$$
$$\widetilde{\omega}_j=\widetilde{\lambda}_{\tilde{i}^{(j)}}=|\tilde{i}^{(j)}|^2+\widetilde{\xi}_j,\,\,1\leq j\leq m,$$
$$\widetilde{\Omega}_i=\widetilde{\lambda}_{i}=|i|^2,\,\,i\notin \mathcal{I}_2,$$
and the corresponding eigenfunctions $\phi_i(x)=(2\pi)^{-\frac{d}{2}}\me^{\mi \langle i,x\rangle}.$

Without loss of generality, we consider the equation \eqref{CNLS1} when $G_1=|u|^4|v|^2 , G_2=|u|^2|v|^2$ since the higher order terms of nonlinearities will not
make any difference.

Let $u(t,x)=\sum\limits_{h\in \mathbb{Z}^d}q_h(t)\phi_h(x),$ $v(t,x)=\sum\limits_{h\in \mathbb{Z}^d}p_h(t)\phi_h(x),$
then we obtain the equivalent lattice reversible equations
 \begin{equation}\label{CNLS2}
                       \begin{cases}
                        \dot{q}_h=\mi\lambda_h q_h+Q^{(q_h)}(q,p,\bar{q},\bar{p}),\quad                                  \\
                        \dot{p}_h=\mi\widetilde{\lambda}_h p_h+\widetilde{Q}^{(p_h)}(q,p,\bar{q},\bar{p})\\
                        \dot{\bar{q}}_h=-\mi\lambda_h \bar{q}_h+Q^{(\bar{q}_h)}(q,p,\bar{q},\bar{p}),\quad                                  \\
                        \dot{\bar{p}}_h=-\mi\widetilde{\lambda}_h \bar{p}_h+\widetilde{Q}^{(\bar{p}_h)}(q,p,\bar{q},\bar{p}),\,h\in \mathbb{Z}^d,
                       \end{cases}
\end{equation}
which is reversible with respect to $S(q,p, \bar{q}, \bar{p})=(\bar{q}, \bar{p},q,p),$
where
\begin{equation}\label{gn1}
Q^{(q_h)}=\sum\limits_{i,j,k,l,m\in \mathbb{Z}^d}Q^{(q_h)}_{ijklm}q_iq_jp_k\bar{q}_l\bar{p}_m,\,Q^{(\bar{q}_h)}=\overline{Q^{(q_h)}},
\end{equation}
and
\begin{equation}\label{gn2}
\tilde{Q}^{(p_h)}=\sum\limits_{i,j,k\in \mathbb{Z}^d}\tilde{Q}^{(p_h)}_{ijk}q_ip_j\bar{q}_k,\,\tilde{Q}^{(\bar{p}_h)}=\overline{\tilde{Q}^{(p_h)}}
\end{equation}
with

\begin{equation}\label{gn3}
  \begin{split}
 Q^{(q_h)}_{ijklm}=&2\mi \int_{\mathbb{T}^d}\phi_i\phi_j\phi_k\bar{\phi}_l\bar{\phi}_m\bar{\phi}_h dx\\
                  =&
                       \begin{cases}
                        \frac{2\mi}{(2\pi)^{2d}},\quad                                      & i+j+k-l-m-h=0,\\
                        0,                       \quad                   &  i+j+k-l-m-h\neq 0,
                       \end{cases}
\end{split}
\end{equation}
and
\begin{equation}\label{gn4}
  \begin{split}
 \tilde{Q}^{(p_h)}_{ijk}=&\mi \int_{\mathbb{T}^d}\phi_i\phi_j\bar{\phi}_k\bar{\phi}_h dx\\
                  =&
                       \begin{cases}
                        \frac{\mi}{(2\pi)^{d}} ,\quad                                      & i+j-k-h=0,\\
                        0                         \quad                   &  i+j-k-h\neq 0,
                       \end{cases}
\end{split}
\end{equation}

By direct computation, one can verify that
the perturbations $Q^{(q)}=(Q^{(q_h)})_{h\in \mathbb{Z}^d_1}
$ and  $\tilde{Q}^{(p)}=(\tilde{Q}^{(p_h)})_{h\in \mathbb{Z}^d_2}
$ have the following regularity property.
\begin{lemma}\label{reg}
For any fixed $\rho > 0$, $Q^{(q)}$ \emph{(}resp. $\tilde{Q}^{(p)}$\emph{)} is real analytic as a map in a neighborhood
of the origin with
$$\|Q^{(q)}\|_{\rho}\leq c \|(q,p)\|^5_{\rho},\,\,(resp.\|\tilde{Q}\|^{(p)}_{\rho}\leq c \|(q,p)\|^3_{\rho}).$$
\end{lemma}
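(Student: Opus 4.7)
The plan is to estimate the weighted $\ell^\rho$-norm of $Q^{(q)}$ (resp.\ $\tilde{Q}^{(p)}$) directly from the explicit form of the coefficients given in \eqref{gn3} and \eqref{gn4}; analyticity in a neighborhood of the origin will then follow from the fact that each map is a polynomial of fixed degree whose defining series converges absolutely and uniformly on bounded sets.

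First I would unfold the definition of $\|\cdot\|_\rho$:
\begin{equation*}
\|Q^{(q)}\|_\rho = \sum_{h \in \mathbb{Z}^d} \me^{|h|\rho}\,|Q^{(q_h)}| \leq \frac{2}{(2\pi)^{2d}} \sum_{h \in \mathbb{Z}^d} \me^{|h|\rho} \sum_{\substack{i,j,k,l,m \in \mathbb{Z}^d \\ i+j+k-l-m=h}} |q_i||q_j||p_k||\bar{q}_l||\bar{p}_m|.
\end{equation*}
The key step is to exploit the momentum conservation $h=i+j+k-l-m$ hidden in \eqref{gn3} together with the triangle inequality $|h|\leq|i|+|j|+|k|+|l|+|m|$ to split the weight as $\me^{|h|\rho}\leq \me^{|i|\rho}\me^{|j|\rho}\me^{|k|\rho}\me^{|l|\rho}\me^{|m|\rho}$. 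Once this bound is inserted, the outer sum over $h$ is absorbed by the Kronecker constraint ($h$ is uniquely determined by $(i,j,k,l,m)$), and the remaining fivefold sum factorises into a product of one-dimensional sums:
\begin{equation*}
\|Q^{(q)}\|_\rho \leq \frac{2}{(2\pi)^{2d}}\,\|q\|_\rho^2\,\|\bar{q}\|_\rho\,\|p\|_\rho\,\|\bar{p}\|_\rho \leq c\,\|(q,p)\|_\rho^5.
\end{equation*}
The estimate for $\tilde{Q}^{(p)}$ is identical, with three summation indices in place of five and \eqref{gn4} in place of \eqref{gn3}, producing $\|\tilde{Q}^{(p)}\|_\rho \leq c\|(q,p)\|_\rho^3$.

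Real analyticity in a neighborhood of the origin is then immediate: each coordinate $Q^{(q_h)}$ is a polynomial of fixed degree in $(q,p,\bar{q},\bar{p})$, and the bound just established shows that the $\ell^\rho$-valued series defining $Q^{(q)}$ converges absolutely and uniformly on every bounded ball in $\ell^\rho\times\ell^\rho$. Since each partial sum is a polynomial, hence entire, and uniform limits of analytic maps between Banach spaces are analytic, $Q^{(q)}$ is real analytic on a neighborhood of the origin; the same reasoning applies to $\tilde{Q}^{(p)}$.

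I do not anticipate any serious obstacle; the only piece of bookkeeping is to track that the components of $q$ and $p$ in the lattice equation \eqref{CNLS2} run over all of $\mathbb{Z}^d$, while the norm $\|\cdot\|_\rho$ to be used in the application of Theorem \ref{KAM} is defined on the complement of the tangential sets $\mathcal{I}_1$ and $\mathcal{I}_2$. Since restricting the summation range to a subset only strengthens the inequality, the estimate remains valid once the tangential modes are replaced by their action--angle counterparts in the next subsection.
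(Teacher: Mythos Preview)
Your proposal is correct and is precisely the ``direct computation'' that the paper alludes to but does not spell out; the paper states the lemma without proof, simply prefacing it with the remark that it can be verified directly. Your use of the momentum constraint $h=i+j+k-l-m$ from \eqref{gn3} together with the triangle inequality to factor the weighted sum is the standard argument here, and your closing remark about the index set $\mathbb{Z}^d$ versus $\mathbb{Z}^d_l$ correctly anticipates the transition to action--angle variables in the next subsection.
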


Let $P^0=\sum\limits_{\varrho=\pm}(Q^{(q^\varrho)}\frac{\partial}{\partial q^\varrho}+\tilde{Q}^{(p^\varrho)}\frac{\partial}{\partial p^\varrho}),$ then we have
the following lemma.
\begin{lemma}\label{A56}
\emph{(1)} $[P^0, \mathbb{M}^0_l]=0,\,l=1,\cdots,d,$
where
 \begin{equation}\label{M0}
    \mathbb{M}^0_l= \sum_{\varrho=\pm} \sum_{j\in \mathbb{Z}^d}\varrho \mi j_lq^{\varrho}_j\frac{\partial}{\partial q^{\varrho}_{j}}+ \sum_{\varrho=\pm} \sum_{j\in \mathbb{Z}^d}\varrho \mi j_lp^{\varrho}_j\frac{\partial}{\partial p^{\varrho}_{j}};
\end{equation}

\emph{(2)} $P^0$ satisfies T\"{o}plitz-Lipschitz property.
\end{lemma}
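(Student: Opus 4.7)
The plan is to verify both parts of the lemma by direct computation that exploits the explicit formulas in \eqref{gn1}--\eqref{gn4} together with the translation-invariant momentum selection rule they encode.

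For part (1), I would start from the component formula
\[
[P^0,\mathbb{M}^0_l]^{(u)}=\sum_v\bigl((\mathbb{M}^0_l)^{(v)}\partial_v(P^0)^{(u)}-(P^0)^{(v)}\partial_v(\mathbb{M}^0_l)^{(u)}\bigr),
\]
dictated by the paper's convention $[X,Y]=YX-XY$, and examine the $q_h$-component (the remaining three components are treated identically by the conjugation relations in \eqref{gn1}--\eqref{gn2} and the analogous rule \eqref{gn4}). Since $\mathbb{M}^0_l$ is diagonal with coefficients $\varrho\mi j_l\,q^\varrho_j$ and $\varrho\mi j_l\,p^\varrho_j$, the first-order operator $\sum_v(\mathbb{M}^0_l)^{(v)}\partial_v$ acts on a monomial $q_iq_jp_k\bar q_l\bar p_m$ by multiplication by $\mi(i_l+j_l+k_l-l_l-m_l)$, while the second term contributes $\mi h_l\,Q^{(q_h)}$. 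The $q_h$-component therefore collapses to
\[
\sum_{i,j,k,l,m}\mi\bigl[(i_l+j_l+k_l-l_l-m_l)-h_l\bigr]\,Q^{(q_h)}_{ijklm}\,q_iq_jp_k\bar q_l\bar p_m,
\]
and the selection rule in \eqref{gn3} forces the bracketed scalar to vanish on every surviving term. This proves (1).

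For part (2), I would first note that $P^0$ has no $\theta,\varphi,I,J$ components, so conditions \eqref{tl0} and \eqref{tl3} hold trivially with zero; and in the pre-normalization lattice setting $\Omega^0_j\equiv 0$ and $\mathcal{A}\equiv 0$, so $\Lambda=0$ and the remaining conditions \eqref{tl1}--\eqref{tl2}, \eqref{tl4}--\eqref{tl5} reduce to statements about partial derivatives of the scalar components $Q^{(q^\sigma_{i+tc})}$ and $\tilde Q^{(p^\sigma_{i+tc})}$. The crucial structural observation is that the nonvanishing coefficients in \eqref{gn3}--\eqref{gn4} are \emph{constants} depending only on whether a translation-invariant momentum sum vanishes. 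Differentiating $Q^{(q_{i+tc})}$ in $q_{j+tc}$ leaves the constraint in the form $b+c-d-e=i-j$; differentiating in $\bar q_{j-tc}$ leaves $a+b+c-e=i+j$. In each case the auxiliary shift $tc$ cancels identically, so the derivative is \emph{exactly} independent of $t$ for all $t$. Consequently the limits in \eqref{tl1}--\eqref{tl2} exist tautologically, the Lipschitz remainders in \eqref{tl4}--\eqref{tl5} vanish for every $t$, and no $1/|t|$ decay needs to be produced; the same argument applies verbatim to the mixed derivatives $\partial Q^{(q)}/\partial p$ and $\partial\tilde Q^{(p)}/\partial q$ required by \eqref{tl2} and \eqref{tl5}.

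The remaining and only substantive task is the weighted-norm bound $\leq\varepsilon_0\me^{-|i\mp j|\rho}$ on each $t$-independent derivative. Here I would use Lemma \ref{reg} to control the overall size of $P^0$ (so that $\varepsilon_0\sim s^4$) and then match exponential weights: each summand of the derivative is paired against the weights $\me^{|\cdot|\rho}$ built into the $\ell^\rho$ norm, and applying the triangle inequality $|b|+|c|+|d|+|e|\ge|b+c-d-e|=|\delta|$ on the momentum constraint with $\delta=\pm(i\mp j)$ extracts exactly the factor $\me^{-|i\mp j|\rho}$ that is needed. This exponential bookkeeping is the main technical step but is entirely standard, of the same flavour as in the Hamiltonian case \cite{Geng11}; the cubic nonlinearity $\tilde Q^{(p)}$ is strictly simpler.
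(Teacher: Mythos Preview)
Your proposal is correct and follows essentially the same approach as the paper: for part (1) you compute the Lie bracket componentwise and use the momentum selection rule \eqref{gn3}--\eqref{gn4} to kill the surviving scalar factor (the paper packages the same computation via multi-indices and the identity $[q^\alpha\bar q^\beta p^{\tilde\alpha}\bar p^{\tilde\beta},\mathbb{M}^0_l]=\mi\pi_l(\cdots)\,q^\alpha\bar q^\beta p^{\tilde\alpha}\bar p^{\tilde\beta}$), and for part (2) both you and the paper observe that the constant coefficients in \eqref{gn3}--\eqref{gn4} make $\partial Q^{(q_{i+tc})}/\partial q_{j+tc}$ literally independent of $t$, so the limits and Lipschitz remainders are trivial. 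Your treatment is in fact slightly more complete, since you also address the weighted-norm bound $\varepsilon_0\me^{-|i\mp j|\rho}$ and the trivial cases \eqref{tl0}, \eqref{tl3}, which the paper defers to the later verification of (A6) in Section~4.2.
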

\begin{proof}
(1)
If we write
\begin{equation}\label{speperb}
  \begin{split}
        Q^{(q^\varrho_h)}=&\sum\limits_{\alpha, \beta, \tilde{\alpha}, \tilde{\beta}}Q^{(q^\varrho_h)}_{\alpha \beta \tilde{\alpha} \tilde{\beta}}q^\alpha \bar{q}^\beta p^{\tilde{\alpha}} \bar{p}^{\tilde{\beta}},\\
        \tilde{Q}^{(p^\varrho_h)}=&\sum\limits_{\alpha, \beta, \tilde{\alpha}, \tilde{\beta}}\tilde{Q}^{(p^\varrho_h)}_{\alpha \beta \tilde{\alpha} \tilde{\beta}}q^\alpha \bar{q}^\beta p^{\tilde{\alpha}} \bar{p}^{\tilde{\beta}},
   \end{split}
\end{equation}
then by \eqref{gn3} and \eqref{gn4}, we have  $Q^{(q^\varrho_h)}_{\alpha \beta \tilde{\alpha} \tilde{\beta}}=0$ and $\tilde{Q}^{(p^\varrho_h)}_{\alpha \beta \tilde{\alpha} \tilde{\beta}}=0$ when  $\pi_l(\alpha \beta, \tilde{\alpha} \tilde{\beta}; v)\neq 0,v= q^\varrho_h, p^\varrho_h$.
where
$$\pi_{l}(\alpha \beta, \tilde{\alpha} \tilde{\beta}; v)=\sum_{j\in \mathbb{Z}^d}(\alpha_j -\beta_j)j_l+\sum_{j\in \mathbb{Z}^d}(\tilde{\alpha}_j -\tilde{\beta}_j)j_l-\varrho h_l.$$
Note that by elementary computation, we have $[q^\alpha \bar{q}^\beta p^{\tilde{\alpha}} \bar{p}^{\tilde{\beta}}, \mathbb{M}^0_l]=\mi \pi_{l}(\alpha \beta, \tilde{\alpha} \tilde{\beta}; v)q^\alpha \bar{q}^\beta p^{\tilde{\alpha}} \bar{p}^{\tilde{\beta}},$ which implies
$[P^0, \mathbb{M}^0_l]=0.$

(2) We only consider
$\lim\limits_{t\rightarrow\infty}\frac{\partial Q^{(q_{i+tc})}}{\partial q_{j+tc}}.$
It follows from  \eqref{gn1} and \eqref{gn3} that
$$\frac{\partial Q^{(q_{i})}}{\partial q_{j}}=\sum\limits_{n+k+l-m=i-j}\frac{4\mi}{(2\pi)^{2d}}q_np_k\bar{q}_l\bar{p}_m,$$
then $\frac{\partial Q^{(q_{i+tc})}}{\partial q_{j+tc}}=\frac{\partial Q^{(q_{i})}}{\partial q_{j}}=\lim\limits_{t\rightarrow\infty}\frac{\partial Q^{(q_{i+tc})}}{\partial q_{j+tc}}.$

\end{proof}

\subsection{Verification of  assumptions (A1)-(A6)}

We introduce  action-angle variables $(\theta,  \varphi, I, J)$
and normal  coordinates $(z,w, \bar{z}, \bar{w})$ by the following  transformation $\Psi$ on some $D(r,s),$ ($r,s>0$):
\begin{equation}\label{Psi}
 \begin{split}
 &q_{i^{(j)}}=\sqrt{I_j+I^{0}_j}\me^{\mi \theta_j},\,\,\bar{q}_{i^{(j)}}=\sqrt{I_j+I^{0}_j}\me^{-\mi \theta_j},\,\,j=1,\cdots,n,\\
&p_{\tilde{i}^{(j)}}=\sqrt{J_j+J^{0}_j}\me^{\mi \varphi_j},\,\,\bar{p}_{\tilde{i}^{(j)}}=\sqrt{J_j+J^{0}_j}\me^{-\mi \varphi_j},\,\,j=1,\cdots,m,\\
&q_h=z_h,\,\,\bar{q}_h=\bar{z}_h,\,\,h\notin \mathcal{I}_1,\\
&p_h=w_h,\,\,\bar{p}_h=\bar{w}_h,\,\,h\notin \mathcal{I}_2,\\
\end{split}
\end{equation}
where the $I^{0}_j,\,\,J^{0}_j$ are  fixed numbers satisfying $0<s<I^{0}_j,\,J^{0}_j<2s.$

We  obtain a new vector field
\begin{equation}\label{resvf0}
 \begin{split}
 &X(\theta,\varphi,I, J, z,\bar{z},w,\bar{w})\\
=&N(\theta,\varphi,I, J, z,\bar{z},w,\bar{w})+P(\theta,\varphi,I, J, z,\bar{z},w,\bar{w}),
\end{split}
\end{equation}
where
\begin{equation*}
  N=\omega\frac{\partial}{\partial \theta}+\tilde{\omega}\frac{\partial}{\partial \varphi}+\mi\Omega z\frac{\partial}{\partial z}
  +\mi\widetilde{\Omega }w\frac{\partial}{\partial w}-\mi\Omega\bar{z}\frac{\partial}{\partial \bar{z}}-\mi\widetilde{\Omega}\bar{w}\frac{\partial}{\partial \bar{w}}
\end{equation*}
and
\begin{equation}\label{pyw}
  P=\sum\limits_{\textsf{w}\in\{\theta,\varphi,I, J, z,\bar{z},w,\bar{w}\}}P^{(\textsf{w})}(\theta,\varphi,I, J, z,\bar{z},w,\bar{w}; \xi,\widetilde{\xi})\frac{\partial}{\partial \textsf{w}}
\end{equation}
with
$$\omega_b=|i^{(b)}|^2+\xi_b,\,\,1\leq b\leq n,$$
$$\widetilde{\omega}_b=|\tilde{i}^{(b)}|^2+\widetilde{\xi}_b,\,\,1\leq b\leq m,$$
$$\Omega_h=|h|^2,\,\,h\notin \mathcal{I}_1,$$
$$\widetilde{\Omega}_h=|h|^2,\,\,h\notin \mathcal{I}_2.$$
\begin{equation}\label{p1}
 \begin{split}
    P^{(\theta_b)}=&\frac{1}{2\mi q_{i^{(b)}}}Q^{(q_{i^{(b)}})}\circ \Psi-
\frac{1}{2\mi \bar{q}_{i^{(b)}}}Q^{(\bar{q}_{i^{(b)}})}\circ \Psi.
 \end{split}
\end{equation}
\begin{equation}\label{p2}
     \begin{split}
    P^{(I_b)}=&\bar{q}_{i^{(b)}}Q^{(q_{i^{(b)}})}\circ \Psi+q_{i^{(b)}}Q^{(\bar{q}_{i^{(b)}})}\circ \Psi.
     \end{split}
\end{equation}
\begin{equation}\label{p3}
\begin{split}
P^{(z^\sigma_h)}=&Q^{(q^\sigma_{h})}\circ \Psi,\,\sigma=\pm.
 \end{split}
\end{equation}
\begin{equation}\label{p4}
\begin{split}
   P^{(\varphi_b)}=&\frac{1}{2\mi p_{\tilde{i}^{(b)}}}\tilde{Q}^{(p_{\tilde{i}^{(b)}})}\circ \Psi-
\frac{1}{2\mi \bar{p}_{\tilde{i}^{(b)}}}\tilde{Q}^{(\bar{p}_{\tilde{i}^{(b)}})}\circ \Psi.
\end{split}
\end{equation}
\begin{equation}\label{p5}
\begin{split}
    P^{(J_b)}=&\bar{p}_{\tilde{i}^{(b)}}\tilde{Q}^{(p_{\tilde{i}^{(b)}})}\circ \Psi+ p_{\tilde{i}^{(b)}}\tilde{Q}^{(\bar{p}_{\tilde{i}^{(b)}})}\circ \Psi.
    \end{split}
\end{equation}
\begin{equation}\label{p6}
\begin{split}
    P^{(w^\sigma_h)}=&\tilde{Q}^{(p^\sigma_{h})}\circ \Psi, \,\,\sigma=\pm.
    \end{split}
\end{equation}
$X$ is reversible with respect to the involution
$$S(\theta,\varphi, I, J, z, w, \bar{z}, \bar{w})=(-\theta,-\varphi, I, J, \bar{z}, \bar{w}, z, w).$$

Now we give the verification of  assumptions (A1)-(A6) for \eqref{resvf0}.

\emph{Verifying }(A1): Set $\zeta=(\xi, \tilde\xi)$, it is obvious as the Jacobian matrix $\frac{\partial(\omega,\tilde{\omega})}{\partial\zeta}=\frac{\partial(\omega,\tilde{\omega})}{\partial(\xi,\tilde{\xi})}=I_{n+m}.$

\emph{Verifying }(A2): It is also obvious.

\emph{Verifying }(A3): One can  refer to  Section 3.2 in \cite{Geng11} since the proof is similar.

\emph{Verifying }(A4): Suppose   vector field \eqref{resvf0} is defined
on the domain $D(r,s)$ with  $0<r<1, s=\varepsilon^{2}.$ It follows from \eqref{p1}-\eqref{p6} that
$$P^{(\theta_b)}=O(s^2),  P^{(I_b)}=O(s^3), P^{(z^\sigma_h)}=O(s^{\frac{5}{2}})\me^{-\rho|h|},$$
$$P^{(\varphi_b)}=O(s),  P^{(J_b)}=O(s^2),  P^{(w^\sigma_h)}=O(s^{\frac{3}{2}})\me^{-\rho|h|}.$$
then
$$\|P\|_{s;D_\rho(r,s)\times\mathcal{O}}\leq cs^{\frac{1}{2}}\leq c\varepsilon.$$

\emph{Verifying }(A5): Through the transformation $\Psi$ in \eqref{Psi}, the vector fields  $\mathbb{M}^0_l$
in \eqref{M0} are transformed into $\mathbb{M}_l=\Psi^*\mathbb{M}^0_l,$ then
$$[P, \mathbb{M}_l]=[\Psi^*P^0, \Psi^*\mathbb{M}^0_l]=\Psi^*[P^0, \mathbb{M}^0_l]=0.$$

\emph{Verifying }(A6): We only consider $\frac{\partial P^{(\theta_b)}}{\partial z_{j+tc}}$ and $\frac{\partial P^{(z_{i+tc})}}{\partial z_{j+tc}}$ and
the others can be verified similarly.
$$\frac{\partial P^{(\theta_b)}}{\partial z_{j+tc}}=O(s^2)\me^{-\rho|j+tc|}\rightarrow0, t\rightarrow\infty.$$
then $$\|\frac{\partial P^{(\theta_b)}}{\partial z_{j+tc}}-\lim_{t\rightarrow\infty}\frac{\partial P^{(\theta_b)}}{\partial z_{j+tc}}\|_{s;D_\rho(s,r)\times\mathcal{O}}\leq\varepsilon.$$

It follows from \eqref{p3} and  (2) in Lemma \ref{A56} that
$\frac{\partial P^{(z_{i})}}{\partial z_{j}}=O(s^\frac{5}{2})\me^{-\rho|i-j|}$
and
 $\frac{\partial P^{(z_{i+tc})}}{\partial z_{j+tc}}=\frac{\partial P^{(z_{i})}}{\partial z_{j}}=\lim\limits_{t\rightarrow\infty}\frac{\partial P^{(z_{i+tc})}}{\partial z_{j+tc}}.$
then
$$\|\frac{\partial P^{(z_{i+tc})}}{\partial z_{j+tc}}-\lim\limits_{t\rightarrow\infty}\frac{\partial P^{(z_{i+tc})}}{\partial z_{j+tc}}\|_{s;D_\rho(s,r)\times\mathcal{O}}\leq\frac{\varepsilon}{|t|}\me^{-\rho|i-j|}.$$

\section[]{Proof of Theorem \ref{KAM}}

At the $\nu$th step of  KAM iteration, we consider an $S-$reversible vector field on $D_{\rho_\nu}(r_\nu,s_\nu)\times\mathcal{O}_{\nu}:$
$$X_\nu=N_\nu+\mathcal{A}_\nu+P_\nu$$
 satisfying (A1)-(A6), where $N_\nu$ and  $\mathcal{A}_\nu$ have the same form as  $N$ and  $\mathcal{A}$ in \eqref{VFN} and \eqref{VFA}.

We shall  construct an $S-$invariant transformation $$\Phi_\nu: D_{\rho_{\nu+1}}(r_{\nu+1}, s_{\nu+1})\times\mathcal{O}_{\nu}\rightarrow D_{\rho_\nu}(r_\nu, s_\nu)\times\mathcal{O}_{\nu}$$ such that
$\Phi^*_\nu X_\nu:=(D\Phi_\nu)^{-1}\cdot X_\nu\circ \Phi_\nu=N_{\nu+1}+\mathcal{A}_{\nu+1}+P_{\nu+1}$
with new normal form $N_{\nu+1},$ $\mathcal{A}_{\nu+1}$ and a much smaller perturbation term $P_{\nu+1}$ and still satisfies (A1)-(A6).

In the sequel, for simplicity,  we drop the subscript $\nu$ and write the symbol `$+$'  for `$\nu+1$'.
Then we have vector field
  \begin{equation}\label{X}
\begin{split}
X=&N+\mathcal{A}+P\\
\end{split}
\end{equation}
with
\begin{equation*}
\begin{split}
N=&\sum^{n}_{b=1}\omega_b(\zeta)\frac{\partial}{\partial \theta_b}+\sum^{m}_{b=1}\tilde{\omega}_b(\zeta)\frac{\partial}{\partial \varphi_b}\\
  &+\sum_{\varrho=\pm}(\sum_{j\in \mathbb{Z}^d_1}\varrho\mi\Omega_j(\zeta) z^\varrho_j\frac{\partial}{\partial z^\varrho_j}
  +\sum_{j\in \mathbb{Z}^d_2}\varrho\mi\widetilde{\Omega }_j(\zeta)w^\varrho_j\frac{\partial}{\partial w^\varrho_j})
 \end{split}
\end{equation*}
\begin{equation*}
  \begin{split}
  \mathcal{A}=&\sum_{\varrho=\pm}\sum_{j\in \mathbb{Z}^d_1\cap \mathbb{Z}^d_2}(\varrho\mi A_j(\zeta)w^\varrho_j\frac{\partial}{\partial z^\varrho_j}+\varrho\mi \tilde{A}_j(\zeta)z^\varrho_j\frac{\partial}{\partial w^\varrho_j}),
   \end{split}
\end{equation*}

Let $0<r_+<r$  and
$$s_{+}=\frac{1}{4}s\varepsilon^{\frac{1}{3}},\,\varepsilon_{+}=c\gamma^{-5}(2r-2r_{+})^{-1} K^{5\tau+19}\varepsilon^{\frac{5}{3}}+\varepsilon^{\frac{7}{6}}, $$ where $c$ is some suitable  (possible different) constant independent of the iterative steps.
Then our goal  is to find a set $\mathcal{O}_+\subset\mathcal{O}$ and  an $S-$invariant transformation $\Phi: D_\rho(r_{+}, s_{+})\times\mathcal{O}_{+}\rightarrow D_\rho(r, s)\times\mathcal{O}$ such that
it transforms above
$X$ in \eqref{X}
into
 \begin{equation*}
\begin{split}
X_{+}=&N_{+}+\mathcal{A}_{+}+P_{+}\\
\end{split}
\end{equation*}
with
\begin{equation*}
\begin{split}
N_{+}=&\sum^{n}_{b=1}\omega_{+,b}(\zeta)\frac{\partial}{\partial \theta_b}+\sum^{m}_{b=1}\tilde{\omega}_{+,b}(\zeta)\frac{\partial}{\partial \varphi_b}\\
  &+\sum_{\varrho=\pm}(\sum_{j\in \mathbb{Z}^d_1}\varrho\mi\Omega_{+,j}(\zeta) z^\varrho_j\frac{\partial}{\partial z^\varrho_j}
  +\sum_{j\in \mathbb{Z}^d_2}\varrho\mi\widetilde{\Omega }_{+,j}(\zeta)w^\varrho_j\frac{\partial}{\partial w^\varrho_j})
 \end{split}
\end{equation*}
and
\begin{equation*}
  \begin{split}
  \mathcal{A}_+=&\sum_{\varrho=\pm}\sum_{j\in \mathbb{Z}^d_1\cap \mathbb{Z}^d_2}(\varrho\mi A_{+,j}(\zeta)w^\varrho_j\frac{\partial}{\partial z^\varrho_j}+\varrho\mi \tilde{A}_{+,j}(\zeta)z^\varrho_j\frac{\partial}{\partial w^\varrho_j}).
   \end{split}
\end{equation*}

\subsection{Solving the homological equations}

In the sequel,
for $K>0,$ we define the truncation operator $\mathcal{T}_K$ as follows: for  $f$  on $D(r)=\{(\theta,\varphi)\in \mathbb{C}^n\times\mathbb{C}^{m}:|\hbox{Im} \theta|<r, |\hbox{Im} \varphi|<r\},$
$$\mathcal{T}_Kf(\theta,\varphi):=\sum\limits_{(k,\tilde{k})\mathbb{Z}^n\times\mathbb{Z}^{m},\atop\ |k|+|\tilde{k}|\leq K}f_{k,\tilde{k}}\me^{\mi (\langle k, \theta\rangle+\langle \tilde{k}, \varphi\rangle)}.$$

The average of $f$ with respect to $(\theta,\varphi)$ is defined as
$$[f]=f_{0,0}=\frac{1}{(2\pi)^{n+m}}\int_{\mathbb{T}^{n+m}}f(\theta,\varphi)d\theta d\varphi$$

We write the reversible vector field $P$ as
Taylor-Fourier series
\begin{equation*}
  \begin{split}
        P(y ; \zeta)
            =&\sum\limits_{\textsf{v}\in \mathscr{V}}\sum\limits_{k, \tilde{k}, l,\tilde{l}, \alpha, \beta, \tilde{\alpha}, \tilde{\beta}}P^{(\textsf{v})}_{kl\alpha \beta,\tilde{k}\tilde{l}\tilde{\alpha}\tilde{\beta}}(\zeta)\me^{\mi (\langle k, \theta\rangle+\langle \tilde{k}, \varphi\rangle)}I^lJ^{\tilde{l}} z^{\alpha}\bar{z}^{\beta}w^{\tilde{\alpha}}\bar{w}^{\tilde{\beta}}\frac{\partial}{\partial \textsf{v}} ,
   \end{split}
\end{equation*}

Let $R=\sum\limits_{\textsf{v}\in \mathscr{V}}R^{(\textsf{v})}(y; \zeta)\frac{\partial}{\partial \textsf{v}}$
be the  truncation of  $P$, i.e.,
 for $\textsf{v}\in \{\theta_b, \varphi_b\}$,
$R^{(\textsf{v})}=\mathcal{T}_KP^{(\textsf{v})}_{000,000}(\theta,\varphi)$
and for $\textsf{v}\in \{I_b, J_b, z_j, \bar{z}_j, w_j, \bar{w}_j\}$,
\begin{equation*}
  \begin{split}
  R^{(\textsf{v})}=&\sum\limits_{ |l|+|\tilde{l}|+|\alpha|+|\beta|+|\tilde{\alpha}|+|\tilde{\beta}|\leq 1}\mathcal{T}_KP^{(\textsf{v})}_{l \alpha \beta, \tilde{l} \tilde{\alpha} \tilde{\beta}}(\theta,\varphi)I^{l}J^{\tilde{l}}z^{\alpha}\bar{z}^{\beta}w^{\tilde{\alpha}}\bar{w}^{\tilde{\beta}}\\
   \end{split}
\end{equation*}

We rewrite $R^{(\textsf{v})}$ as follows:
for $\textsf{v}\in \{\theta_b, \varphi_b\}$,
$R^{(\textsf{v})}=R^{\textsf{v}}(\theta,\varphi)$
and for $\textsf{v}\in \{I_b, J_b, z_j, \bar{z}_j, w_j, \bar{w}_j\}$,
\begin{equation*}
  \begin{split}
  R^{(\textsf{v})}=&R^{\textsf{v}}(\theta,\varphi)+\sum\limits_{\textsf{u}\in \{I_b, J_b, z_j, \bar{z}_j, w_j, \bar{w}_j\}} R^{\textsf{v}\textsf{u}}(\theta,\varphi)\textsf{u}.
   \end{split}
\end{equation*}

We define the normal form of $R$ as
\begin{equation*}
  \begin{split}
        [R] = &\sum^n_{b=1}[R^{\theta_b}]\frac{\partial}{\partial\theta_b}+\sum^{m}_{b=1}[R^{\varphi_b}]\frac{\partial}{\partial\varphi_b}\\
         &+\sum_{j\in \mathbb{Z}^d_1}[R^{z_jz_j}]z_j\frac{\partial}{\partial z_j}+\sum_{j\in \mathbb{Z}^d_1\cap \mathbb{Z}^d_2}[R^{z_jw_j}]w_j\frac{\partial}{\partial z_j}\\
         &+\sum_{j\in \mathbb{Z}^d_1}[R^{\bar{z}_j\bar{z}_j}]\bar{z}_j\frac{\partial}{\partial \bar{z}_j}+\sum_{j\in \mathbb{Z}^d_1\cap \mathbb{Z}^d_2}[R^{\bar{z}_j\bar{w}_j}]\bar{w}_j\frac{\partial}{\partial \bar{z}_j}\\
          &+\sum_{j\in \mathbb{Z}^d_2}[R^{w_jw_j}]w_j\frac{\partial}{\partial w_j}+\sum_{j\in \mathbb{Z}^d_1\cap \mathbb{Z}^d_2}[R^{w_jz_j}]z_j\frac{\partial}{\partial w_j}\\
         &+\sum_{j\in \mathbb{Z}^d_2}[R^{\bar{w}_j\bar{w}_j}]\bar{w}_j\frac{\partial}{\partial \bar{w}_j}+\sum_{j\in \mathbb{Z}^d_1\cap \mathbb{Z}^d_2}[R^{\bar{w}_j\bar{z}_j}]\bar{z}_j\frac{\partial}{\partial \bar{w}_j}\\
   \end{split}
\end{equation*}

%
%

In the sequel, denote by $\phi^t_X$  the flow generated by  vector field $X$ and $\phi^1_X=\phi^t_X|_{t=1}.$

 Suppose vector field $F$  has the same form as $R,$
$\Phi^*X=(\phi^1_F)^*X,$
\begin{equation*}
  \begin{split}
        \Phi^*X=&(\phi^1_F)^*(N+\mathcal{A})+(\phi^1_F)^*R+(\phi^1_F)^*(P-R)\\
               =&N+\mathcal{A}+[N+\mathcal{A}, F]+\int^1_0(1-t)(\phi^t_F)^*[[N+\mathcal{A},F],F]dt\\
               &+R+\int^1_0(\phi^t_F)^*[R,F]dt+(\phi^1_F)^*(P-R).
   \end{split}
\end{equation*}

We solve the  homological equation
\begin{equation}\label{HEQ}
    [N+\mathcal{A}, F]+R=[R].
\end{equation}
Denote $\partial_{(\omega,\tilde{\omega})}f(\theta, \varphi):=\partial_\omega f(\theta, \varphi)+\partial_{\tilde{\omega}}f(\theta, \varphi).$
By the definition of Lie bracket, the homological equation \eqref{HEQ} is  equivalent to the following scalar  forms $\eqref{sheq1}-\eqref{sheq4}:$
\begin{equation}\label{sheq1}
     \partial_{(\omega,\tilde{\omega})} f=g
\end{equation}
where $$(f, g)=(F^{u}, R^{u}-[R^{u}]), (F^{v}, R^{v}),$$ $u\in \{\theta_a, \varphi_a\};$ $ v\in\{I_a, J_a, I_aI_b, I_aJ_b, J_aI_b, J_aJ_b\}.$

\begin{equation}\label{sheq2}
     \partial_{(\omega,\tilde{\omega})} f- \mi \lambda f=g,
\end{equation}
where $$(f, g; \lambda)\in \{(F^{u_l}, R^{u_l}; \lambda_l):l=1,\cdots,6\}, $$
with $$u_1\in \{I_az^{-\varrho}_i, J_az^{-\varrho}_i, z^{\varrho}_i, z^{\varrho}_iI_a, z^{\varrho}_iJ_a: i\in \mathbb{Z}^d_1\setminus \mathbb{Z}^d_2\},\lambda_1=\varrho\Omega_i,$$
$$u_2\in \{I_aw^{-\varrho}_j, J_aw^{-\varrho}_j, w^{\varrho}_j, w^{\varrho}_jI_a, w^{\varrho}_jJ_a: j\in \mathbb{Z}^d_2\setminus \mathbb{Z}^d_1\},
\lambda_2=\varrho\tilde{\Omega}_j,$$
$$u_3=z^{\varrho}_iz^{\sigma}_j, \lambda_3=\varrho \Omega_i-\sigma \Omega_j, i,j\in \mathbb{Z}^d_1\setminus\mathbb{Z}^d_2,i\neq j, \varrho\neq\sigma,$$
$$u_4=z^{\varrho}_iw^{\sigma}_j, \lambda_4=\varrho \Omega_i-\sigma \tilde{\Omega}_j, i\in \mathbb{Z}^d_1\setminus\mathbb{Z}^d_2,j\in \mathbb{Z}^d_2\setminus\mathbb{Z}^d_1,$$
$$u_5=w^{\varrho}_iz^{\sigma}_j, \lambda_5=\varrho \tilde{\Omega}_i-\sigma \Omega_j, i\in \mathbb{Z}^d_2\setminus\mathbb{Z}^d_1,j\in \mathbb{Z}^d_1\setminus\mathbb{Z}^d_2,$$
$$u_6=w^{\varrho}_iw^{\sigma}_j, \lambda_6=\varrho \tilde{\Omega}_i-\sigma \tilde{\Omega}_j, i,j\in \mathbb{Z}^d_2\setminus\mathbb{Z}^d_1,i\neq j, \varrho\neq\sigma.$$
%
%
%
%
%

\begin{equation}\label{sheq3}
     \partial_{(\omega,\tilde{\omega})} \mathcal{F}+\mi \mathcal{M}\mathcal{F} =\mathcal{G},
\end{equation}
where $$( \mathcal{F}, \mathcal{G}; \mathcal{M})\in\{(\left(
                                                    \begin{array}{c}
                                                      F^{u_l} \\
                                                      F^{v_l} \\
                                                    \end{array}
                                                  \right)
, \left(
                                                    \begin{array}{c}
                                                      R^{u_l} \\
                                                      R^{v_l} \\
                                                    \end{array}
                                                  \right); \mathcal{M}_l):l=1,\cdots,6\},$$
with $$(u_1,v_1)\in \{(I_az^{\varrho}_j,I_aw^{\varrho}_j), (J_az^{\varrho}_j,J_aw^{\varrho}_j): j\in \mathbb{Z}^d_1\cap \mathbb{Z}^d_2\}, \mathcal{M}_1=\varrho M^T_j,$$
 $$(u_2,v_2)\in \{(z^{\varrho}_i,w^{\varrho}_i), (z^{\varrho}_iI_a,w^{\varrho}_iI_a), (z^{\varrho}_iJ_a,w^{\varrho}_iJ_a): i\in \mathbb{Z}^d_1\cap \mathbb{Z}^d_2\}, \mathcal{M}_2=-\varrho M_i,$$
$$(u_3,v_3)=(z^{\varrho}_iz^{\sigma}_j,z^{\varrho}_iw^{\sigma}_j), \mathcal{M}_3=-\varrho \Omega_iI_2+\sigma M^T_j,i\in \mathbb{Z}^d_1\setminus\mathbb{Z}^d_2,j\in \mathbb{Z}^d_1\cap \mathbb{Z}^d_2,$$
$$(u_4,v_4)=(w^{\varrho}_iz^{\sigma}_j,w^{\varrho}_iw^{\sigma}_j), \mathcal{M}_4=-\varrho \tilde{\Omega}_iI_2+\sigma M^T_j,i\in \mathbb{Z}^d_2\setminus\mathbb{Z}^d_1,j\in \mathbb{Z}^d_1\cap \mathbb{Z}^d_2,$$
$$(u_5,v_5)=(z^{\varrho}_iz^{\sigma}_j,w^{\varrho}_iz^{\sigma}_j), \mathcal{M}_5=\sigma \Omega_jI_2-\varrho M_i,i\in \mathbb{Z}^d_1\cap \mathbb{Z}^d_2,j\in \mathbb{Z}^d_1\setminus\mathbb{Z}^d_2,$$
$$(u_6,v_6)=(z^{\varrho}_iw^{\sigma}_j,w^{\varrho}_iw^{\sigma}_j), \mathcal{M}_6=\sigma \tilde{\Omega}_jI_2-\varrho M_i,i\in \mathbb{Z}^d_1\cap \mathbb{Z}^d_2,j\in \mathbb{Z}^d_2\setminus\mathbb{Z}^d_1.$$

\begin{equation}\label{sheq4}
\begin{cases}
\partial_{(\omega,\tilde{\omega})} F^{z^{\varrho}_iz^{\sigma}_j}-\varrho\mi \Omega_i F^{z^{\varrho}_iz^{\sigma}_j}+\sigma\mi F^{z^{\varrho}_iz^{\sigma}_j}\Omega_j-\varrho\mi A_i F^{w^{\varrho}_iz^{\sigma}_j}+\sigma\mi F^{z^{\varrho}_iw^{\sigma}_j}\tilde{A}_j=\\R^{z^{\varrho}_iz^{\sigma}_j}-\delta_{\varrho\sigma}\delta_{ij}[ R^{z^{\varrho}_iz^{\sigma}_j}],\\
\partial_{(\omega,\tilde{\omega})} F^{z^{\varrho}_iw^{\sigma}_j}-\varrho\mi \Omega_i F^{z^{\varrho}_iw^{\sigma}_j}+\sigma \mi F^{z^{\varrho}_iw^{\sigma}_j}\tilde{\Omega}_j -\varrho\mi A_i F^{w^{\varrho}_iw^{\sigma}_j}+\sigma\mi F^{z^{\varrho}_iz^{\sigma}_j}A_j=\\R^{z^{\varrho}_iw^{\sigma}_j}-\delta_{\varrho\sigma}\delta_{ij} [R^{z^{\varrho}_iw^{\sigma}_j}],\\
\partial_{(\omega,\tilde{\omega})} F^{w^{\varrho}_iz^{\sigma}_j}-\varrho\mi \tilde{\Omega}_i F^{w^{\varrho}_iz^{\sigma}_j}+\sigma\mi F^{w^{\varrho}_iz^{\sigma}_j}\Omega_j-\varrho\mi \tilde{A}_i F^{z^{\varrho}_iz^{\sigma}_j}+\sigma\mi F^{w^{\varrho}_iw^{\sigma}_j}\tilde{A}_j=\\R^{w^{\varrho}_iz^{\sigma}_j}-\delta_{\varrho\sigma}\delta_{ij}[R^{w^{\varrho}_iz^{\sigma}_j}],\\
\partial_{(\omega,\tilde{\omega})} F^{w^{\varrho}_iw^{\sigma}_j}-\varrho\mi \tilde{\Omega}_i F^{w^{\varrho}_iw^{\sigma}_j}+\sigma \mi F^{w^{\varrho}_iw^{\sigma}_j}\tilde{\Omega}_j -{\varrho}\mi \tilde{A}_i F^{z^{\varrho}_iw^{\sigma}_j}+\sigma\mi F^{w^{\varrho}_iz^{\sigma}_j}A_j=\\R^{w^{\varrho}_iw^{\sigma}_j}-\delta_{\varrho\sigma}\delta_{ij} [R^{w^{\varrho}_iw^{\sigma}_j}],
\end{cases}
\end{equation}
here $\delta_{\mu\nu}=1, \hbox{if}\ \mu=\nu,\hbox{and}\ 0, \hbox{otherwise}.$

Suppose for $\zeta\in \mathcal{O}$, $|k|+|\tilde{k}|\leq K, $
\begin{equation}\label{MelK}
  \begin{split}
   &|\langle k,\omega\rangle+\langle \tilde{k},\tilde{\omega}\rangle|\geq\frac{\gamma}{K^\tau},\,\, (k,\tilde{k})\neq 0,\\
   &|\langle k,\omega\rangle+\langle \tilde{k},\tilde{\omega}\rangle + \Omega_i|\geq\frac{\gamma}{K^\tau},i\in \mathbb{Z}^d_1\setminus\mathbb{Z}^d_2,\\
   &|\langle k,\omega\rangle+\langle \tilde{k},\tilde{\omega}\rangle+\tilde{\Omega}_i|\geq\frac{\gamma}{K^\tau},i\in \mathbb{Z}^d_2\setminus\mathbb{Z}^d_1, \\
   &|\langle k,\omega\rangle+\langle \tilde{k},\tilde{\omega}\rangle+ \Omega_i\pm \Omega_j|\geq\frac{\gamma}{K^\tau}, (k,\tilde{k})\neq 0,i,j\in \mathbb{Z}^d_1\setminus\mathbb{Z}^d_2, \\
   &|\langle k,\omega\rangle+\langle \tilde{k},\tilde{\omega}\rangle+ \Omega_i\pm \tilde{\Omega}_j|\geq\frac{\gamma}{K^\tau},i\in \mathbb{Z}^d_1\setminus\mathbb{Z}^d_2 ,j\in \mathbb{Z}^d_2\setminus\mathbb{Z}^d_1, \\
   &|\langle k,\omega\rangle+\langle \tilde{k},\tilde{\omega}\rangle+ \tilde{\Omega}_i\pm \tilde{\Omega}_j|\geq\frac{\gamma}{K^\tau}, (k,\tilde{k})\neq 0,i,j\in \mathbb{Z}^d_2\setminus\mathbb{Z}^d_1, \\
   &|\det((\langle k,\omega\rangle+\langle \tilde{k},\tilde{\omega}\rangle)I_2\pm M_i)|\geq\frac{\gamma}{K^\tau},i\in \mathbb{Z}^d_1\cap\mathbb{Z}^d_2, \\
   &|\det((\langle k,\omega\rangle+\langle \tilde{k},\tilde{\omega}\rangle+ \Omega_i)I_2\pm M_j)|\geq\frac{\gamma}{K^\tau},(i,j)\ \hbox{or}\ (j,i)\in (\mathbb{Z}^d_1\cap\mathbb{Z}^d_2)\times (\mathbb{Z}^d_1\setminus\mathbb{Z}^d_2), \\
   & |\det((\langle k,\omega\rangle+\langle \tilde{k},\tilde{\omega}\rangle+ \tilde{\Omega}_i)I_2\pm M_j)|\geq\frac{\gamma}{K^\tau},(i,j)\ \hbox{or}\ (j,i)\in (\mathbb{Z}^d_1\cap\mathbb{Z}^d_2)\times (\mathbb{Z}^d_2\setminus\mathbb{Z}^d_1), \\
   &|\det((\langle k,\omega\rangle+\langle \tilde{k},\tilde{\omega}\rangle)I_{4}+ M_i\otimes I_2\pm I_2\otimes M^T_j)|\geq\frac{\gamma}{K^\tau},\,\, (k,\tilde{k})\neq 0,i,j\in \mathbb{Z}^d_1\cap\mathbb{Z}^d_2. \\
   \end{split}
\end{equation}

\begin{lemma}\label{solF}

 Suppose that uniformly on $\mathcal{O},$ non-resonance conditions in \eqref{MelK}  hold, then
 there exist a positive $c=c(n,m,\tau)$ such that
 the  equation \eqref{HEQ}  has a unique solution $F$  with
   $[ F]=0 ,$ which
is regular on $D(r, s )\times \mathcal{O}.$
Moreover,
\begin{enumerate}
  \item
    $\|F\|_{s;D_\rho(r,s)\times \mathcal{O}}\leq c \gamma^{-5}K^{5\tau+19}\varepsilon;$
  \item $F\circ S=DS\cdot F;$
  \item $[F, \mathbb{M}_l]=0,\,\,l=1,\cdots,d;$
  \item $F$ satisfies (A6) with $\varepsilon^{\frac{2}{3}}$ in place of $\varepsilon$ on $D(r-\delta, s/2 )$, where  $0<\delta<r/2.$
\end{enumerate}

\end{lemma}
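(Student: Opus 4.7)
The plan is to diagonalize the operator $\partial_{(\omega,\tilde\omega)}$ by Fourier expansion in $(\theta,\varphi)$, reducing each equation in \eqref{sheq1}--\eqref{sheq4} to an algebraic linear system (scalar, $2\times 2$, or $4\times 4$) at each Fourier mode with $|k|+|\tilde k|\leq K$. Scalar equations \eqref{sheq1} and \eqref{sheq2} are solved by division: for \eqref{sheq1} with $(k,\tilde k)\neq 0$ via the first bound in \eqref{MelK}, the $(0,0)$ mode being absorbed into the subtraction of $[R]$; and for \eqref{sheq2} via the appropriate normal-frequency Melnikov bounds in \eqref{MelK}. The $2\times 2$ system \eqref{sheq3} is inverted via Cramer's rule using the determinant bounds \eqref{Mel7}, \eqref{Mel11}, \eqref{Mel12}; the $4\times 4$ coupled system \eqref{sheq4}, after a Kronecker vectorization that turns its coefficient matrix into $(\langle k,\omega\rangle+\langle\tilde k,\tilde\omega\rangle)I_4+M_i\otimes I_2-I_2\otimes M_j^T$, is inverted using \eqref{Mel13}. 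Imposing $[F]=0$ fixes the only remaining ambiguity and gives existence and uniqueness.

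For item (1), I would sum the resulting coefficient bounds over $|k|+|\tilde k|\leq K$, control the loss of analyticity in $(\theta,\varphi)$ in the standard way via $\sum_{|k|+|\tilde k|\leq K}\me^{-(|k|+|\tilde k|)(r-\delta)}\lesssim\delta^{-(n+m)}$, and differentiate up to four times in $\zeta$ to pass to the $C^4_W$-norm. Each derivative that falls on a small divisor costs a factor $\gamma^{-1}K^\tau$, while derivatives falling on matrix entries of $\mathcal{M}$ or on $M_i\otimes I_2-I_2\otimes M_j^T$ contribute only polynomial factors in $K$; accumulated through Cramer's rule on the $4\times 4$ system, this produces the claimed estimate $c\gamma^{-5}K^{5\tau+19}\varepsilon$. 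Items (2) and (3) then follow by the uniqueness just established: for (2), setting $\tilde F:=DS^{-1}\cdot F\circ S$ and using the $S$-reversibility of $N+\mathcal{A}$ and of $R$ (inherited from $P$), one verifies that $\tilde F$ solves the same homological equation with the same vanishing mean, so $\tilde F=F$ and hence $F\circ S=DS\cdot F$. For (3), $\mathbb{M}_l$ commutes with $N+\mathcal{A}$ by the lattice-site labeling of the normal frequencies and the site-preserving structure of $\mathcal{A}$, while $[R,\mathbb{M}_l]=0$ by (A5) applied to $P$; bracketing \eqref{HEQ} with $\mathbb{M}_l$ shows $[F,\mathbb{M}_l]$ solves the homogeneous homological equation with vanishing mean, hence equals zero.

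The main obstacle is item (4). The crucial observation is that under an index shift $i\mapsto i+tc$, the dominant $t^2$ part of the normal frequencies $\Omega_j=|j|^2+\Omega^0_j$ and $\tilde\Omega_j=|j|^2+\tilde\Omega^0_j$ cancels in the combinations $\Omega_{i+tc}\pm\Omega_{j\pm tc}$ (and the analogous combinations coming from $M_i$) that enter the small divisors, so these divisors stabilize as $t\to\infty$. Together with (A6) applied to $\mathcal{A}+P$, which supplies both the existence of the limits and the quantitative Lipschitz bounds \eqref{tl3}--\eqref{tl5} for $R$, and with the fact that the truncation keeps only finitely many Fourier modes so that $\partial_\zeta$ acts on a bounded object, one checks that each $\partial F^{(x)}/\partial u^\sigma_{i+tc}$ and the analogous mixed-index partials admit the required limits with the prescribed exponential decay. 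The downgrade from $\varepsilon$ to $\varepsilon^{2/3}$ is precisely what is needed to absorb the small-divisor factor $\gamma^{-5}K^{5\tau+19}$ incurred by the inversion against the smallness hypothesis on $\varepsilon$, via $\gamma^{-5}K^{5\tau+19}\varepsilon\leq\varepsilon^{2/3}$ for $\varepsilon$ sufficiently small.
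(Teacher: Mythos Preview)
Your proposal is essentially correct and follows the paper's approach: Fourier-expand, invert the resulting scalar/$2\times2$/$4\times4$ systems via the bounds in \eqref{MelK}, and deduce (2)--(4) from the structure of the solution. Two small points are worth flagging. First, in item~(1) you introduce a loss of analyticity and a factor $\delta^{-(n+m)}$; this is unnecessary and in fact the lemma asserts the bound on the \emph{full} domain $D_\rho(r,s)$. Since $R$ is already Fourier-truncated to $|k|+|\tilde k|\leq K$, the coefficient bound $|F^{\cdot}_{k,\tilde k}|_{\mathcal{O}}\leq c\gamma^{-5}K^{5\tau+19}|R^{\cdot}_{k,\tilde k}|_{\mathcal{O}}$ is uniform in $(k,\tilde k)$ and passes directly to the weighted norm with no shrinking; this is exactly how the paper argues. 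Second, for item~(3) your Jacobi/uniqueness argument is valid, but the paper takes the more concrete route: it observes that $[X,\mathbb{M}_l]=0$ is equivalent to the vanishing of each coefficient $X^{(\textsf{v})}_{kl\alpha\beta,\tilde k\tilde l\tilde\alpha\tilde\beta}$ whenever $\pi_l(k\alpha\beta;\tilde k\tilde\alpha\tilde\beta;\textsf{v})\neq0$, and since the homological equations are solved mode-by-mode with $F$ determined linearly from $R$, the momentum-conservation constraint on $R$ transfers directly to $F$. For item~(4) the paper simply cites \cite{Geng11}; your sketch of why the $|t|^2$ part of the shifted frequencies cancels and why $\gamma^{-5}K^{5\tau+19}\varepsilon\leq\varepsilon^{2/3}$ absorbs the divisor loss captures the mechanism correctly.
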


\begin{proof}
(1)
As we have mentioned above, the  equation \eqref{HEQ} is  equivalent to  $\eqref{sheq1}-\eqref{sheq4}.$
Below we only consider the most difficult equation \eqref{sheq4} with $\varrho=\sigma, i\neq j$ since the other ones can be solved similarly.

 By Fourier expansion, we have
 \begin{equation*}
((\langle k,\omega\rangle+\langle \tilde{k},\tilde{\omega}\rangle)I_{4}-\varrho M_i\otimes I_2+\varrho I_2\otimes M^T_j)
\left(
  \begin{array}{c}
    \mi F^{z^{\varrho}_iz^{\varrho}_j}_{k,\tilde{k}} \\
    \mi F^{z^{\varrho}_iw^{\varrho}_j}_{k,\tilde{k}} \\
    \mi F^{w^{\varrho}_iz^{\varrho}_j}_{k,\tilde{k}} \\
     \mi F^{w^{\varrho}_iw^{\varrho}_j}_{k,\tilde{k}} \\
  \end{array}
\right)
=\left(
  \begin{array}{c}
    R^{z^{\varrho}_iz^{\varrho}_j}_{k,\tilde{k}} \\
    R^{z^{\varrho}_iw^{\varrho}_j}_{k,\tilde{k}} \\
    R^{w^{\varrho}_iz^{\varrho}_j}_{k,\tilde{k}} \\
    R^{w^{\varrho}_iw^{\varrho}_j}_{k,\tilde{k}} \\
  \end{array}
\right)
\end{equation*}
where
\begin{equation*}
\begin{split}
&(\langle k,\omega\rangle+\langle \tilde{k},\tilde{\omega}\rangle)I_{4}-\varrho M_i\otimes I_2+\varrho I_2\otimes M^T_j\\
    =&(\langle k,\omega\rangle+\langle \tilde{k},\tilde{\omega}\rangle)I_{4}+\\
    &\left(
  \begin{array}{cccc}
    -\varrho \Omega_i +\varrho\Omega_j & \varrho \tilde{A}_j & -\varrho A_i & 0 \\
    \varrho A_j & -\varrho \Omega_i +\varrho\tilde{\Omega}_j & 0 & -\varrho A_i \\
    -\varrho \tilde{A}_i & 0 & -\varrho \tilde{\Omega}_i +\varrho\Omega_j & \varrho \tilde{A}_j \\
    0 & -\varrho \tilde{A}_i & \varrho A_j & -\varrho \tilde{\Omega}_i +\varrho\tilde{\Omega}_j \\
  \end{array}
\right).
\end{split}
\end{equation*}
Using non-resonance conditions in \eqref{MelK}, we obtain for $|k|+|\tilde{k}|\leq K,$
$$|F^{z^{\varrho}_iz^{\varrho}_j}_{k,\tilde{k}}|_{\mathcal{O}},  |F^{z^{\varrho}_iw^{\varrho}_j}_{k,\tilde{k}}|_{\mathcal{O}},
|F^{w^{\varrho}_iz^{\varrho}_j}_{k,\tilde{k}}|_{\mathcal{O}}, |F^{w^{\varrho}_iw^{\varrho}_j}_{k,\tilde{k}}|_{\mathcal{O}}$$
\begin{equation*}
\leq c\gamma^{-5}K^{5\tau+19}(|R^{z^{\varrho}_iz^{\varrho}_j}_{k,\tilde{k}}|_{\mathcal{O}}+ |R^{z^{\varrho}_iw^{\varrho}_j}_{k,\tilde{k}}|_{\mathcal{O}}+
|R^{w^{\varrho}_iz^{\varrho}_j}_{k,\tilde{k}}|_{\mathcal{O}}+|R^{w^{\varrho}_iw^{\varrho}_j}_{k,\tilde{k}}|_{\mathcal{O}}),
\end{equation*}
then according to the definition of vector fields, we have
   \begin{equation*}
    \|F\|_{s;D_\rho(r,s)\times \mathcal{O}}\leq c \gamma^{-5}K^{5\tau+19}  \|R\|_{s;D_\rho(r,s)\times \mathcal{O}}\leq c \gamma^{-5}K^{5\tau+19} \varepsilon.
\end{equation*}

(2) $F\circ S=DS\cdot F$ can be implied by the uniqueness of solutions of homological equation.

(3)
We verify $[F, \mathbb{M}_l]=0.$
Consider for $l=1,\cdots,d,$
\begin{equation}\label{piindex}
     \pi_{l}(k \alpha \beta;\tilde{k} \tilde{\alpha} \tilde{\beta};\textsf{v})=
                       \begin{cases}
                        \pi_{l}(k \alpha \beta;\tilde{k} \tilde{\alpha} \tilde{\beta}),\quad                                      &  \textsf{v}=\theta_b,\varphi_b, I_b,J_b,\\
                        \pi_{l}(k \alpha \beta;\tilde{k} \tilde{\alpha} \tilde{\beta})-\varrho j_l,\quad                   &   \textsf{v}=z^\varrho_j , w^\varrho_j,
                       \end{cases}
\end{equation}
where
$$\pi_{l}(k \alpha \beta;\tilde{k} \tilde{\alpha} \tilde{\beta})=\sum^n_{b=1}i^{(b)}_lk_l+\sum^{m}_{b=1}\tilde{i}^{(b)}_l\tilde{k}_l+\sum_{j\in \mathbb{Z}^d_1}(\alpha_j -\beta_j)j_l+\sum_{j\in \mathbb{Z}^d_2}(\tilde{\alpha}_j -\tilde{\beta}_j)j_l.$$
As in the proof of Lemma \eqref{A56}, one can verify that a vector field $X$ satisfying $[X, \mathbb{M}_l]=0$ is equivalent to
$X^{(\textsf{v})}_{k l \alpha \beta, \tilde{k} \tilde{l} \tilde{\alpha} \tilde{\beta}}=0,$ if
$\pi_{l}(k \alpha \beta;\tilde{k} \tilde{\alpha} \tilde{\beta};\textsf{v})\neq0.$
Thus in order to prove $[F, \mathbb{M}_l]=0,$  it suffices to verify that $F^{(\textsf{v})}_{k l \alpha \beta, \tilde{k} \tilde{l} \tilde{\alpha} \tilde{\beta}}=0,$ if
$\pi_{l}(k \alpha \beta;\tilde{k} \tilde{\alpha} \tilde{\beta};\textsf{v})\neq0.$
This can be implied by $[P, \mathbb{M}]=0$ since
 $F$ is determined by $R.$

 (4)  The proof can follow that of Lemma 4.3 in \cite{Geng11} since there is no essential difference.

\end{proof}

\subsection{Estimates on the coordinate transformation}

\begin{lemma}
If $\varepsilon\ll \delta\gamma^{5}K^{-5\tau-19},$ then for every $-1\leq t\leq 1,$
\begin{equation*}
    \phi^t_F:D_\rho(r-2\delta, s/4)\rightarrow D_\rho(r-\delta, s/2),
\end{equation*}
and
\begin{equation*}
    \|\phi^t_F-id\|_{s;D_\rho(r-2\delta, s/4)\times\mathcal{O}}\leq c\gamma^{-5}K^{5\tau+19}\varepsilon,
\end{equation*}
\begin{equation*}
    \|D\phi^t_F-Id\|_{s;D_\rho(r-2\delta, s/4)\times\mathcal{O}}\leq c\gamma^{-5}\delta^{-1}K^{5\tau+19}\varepsilon.
\end{equation*}
\end{lemma}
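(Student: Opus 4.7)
The plan is to argue in the standard way used in KAM iterations: use the bound on $F$ from Lemma~\ref{solF}(1), combined with Cauchy estimates in the analyticity parameters $(r,s)$, to control the flow and its derivative via an integral equation.

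First I would set $\varepsilon_F := c\gamma^{-5}K^{5\tau+19}\varepsilon$ for the bound on $\|F\|_{s;D_\rho(r,s)\times\mathcal O}$ given by Lemma~\ref{solF}. Recall that the flow satisfies $\phi^t_F - \mathrm{id} = \int_0^t F\circ \phi^\tau_F\, d\tau$. To stay in the interior of $D_\rho(r-\delta,s/2)$, one needs that each component of the vector field $F$, measured in the weighted norm, is much smaller than the shrinkage of the corresponding coordinate. Concretely, $X^{(\theta)}$ and $X^{(\varphi)}$ components are responsible for moving in the angles and must be bounded by $\delta$; the $I$, $J$ components (rescaled by $1/s$ inside the norm) must stay below $s/4$ in absolute value; and the $z,w,\bar z,\bar w$ components (rescaled by $1/s$ with the exponential weight) must stay below $s/4$ in the $\|\cdot\|_\rho$ norm. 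Each of these is ensured once $\varepsilon_F \ll \delta$ (which is exactly the smallness hypothesis $\varepsilon\ll\delta\gamma^5K^{-5\tau-19}$). A bootstrap/continuity argument in $t\in[-1,1]$ then shows that $\phi^t_F$ does not leave $D_\rho(r-\delta,s/2)$, proving (1).

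Once the flow is confined, estimate (2) is immediate from the integral equation: $\|\phi^t_F-\mathrm{id}\|_{s;D_\rho(r-2\delta,s/4)\times\mathcal O} \le |t|\cdot\|F\|_{s;D_\rho(r-\delta,s/2)\times\mathcal O} \le \varepsilon_F$. For (3) I would differentiate the integral equation to get $D\phi^t_F - \mathrm{Id} = \int_0^t DF\circ\phi^\tau_F\cdot D\phi^\tau_F\, d\tau$, then estimate $\|DF\|$ on $D_\rho(r-2\delta,s/4)$ via a Cauchy inequality in each of the variables $\theta,\varphi,I,J,z,w,\bar z,\bar w$. The angular Cauchy estimates lose $\delta^{-1}$, while the $I,J$ and the normal-mode Cauchy estimates lose $s^{-1}$; after accounting for the rescalings $1/s$ that define the weighted vector-field norm, the net loss is $\delta^{-1}$, yielding $\|DF\| \le c\delta^{-1}\varepsilon_F$. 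Plugging in a Gronwall inequality over $t\in[-1,1]$ with $c\delta^{-1}\varepsilon_F\ll 1$ gives $\|D\phi^t_F-\mathrm{Id}\|\le 2c\delta^{-1}\varepsilon_F$, which is exactly (3).

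The only mild subtlety, and hence the main thing to be careful with, is that the phase space is mixed (finite-dimensional action-angle part and infinite-dimensional normal part with exponential weight), so one must verify that the weighted norm is sub-multiplicative under the chain rule and that Cauchy's inequality transfers correctly through the exponential factor $\mathrm{e}^{|i|\rho}$. This is standard but needs to be done component-wise. With that in place, the proof is essentially routine contraction/Gronwall in a Banach space of analytic vector fields on $D_\rho(r-2\delta,s/4)$, and no extra structural property of $F$ (beyond the norm bound in Lemma~\ref{solF}(1)) is needed for this step.
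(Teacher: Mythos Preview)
Your proposal is correct and follows essentially the same route as the paper: bound $\|F\|$ by Lemma~\ref{solF}(1), use Cauchy's inequality to get $\|DF\|_{s;D_\rho(r-\delta,s/2)\times\mathcal O}\le c\delta^{-1}\gamma^{-5}K^{5\tau+19}\varepsilon$, deduce that the flow stays inside $D_\rho(r-\delta,s/2)$ under the smallness hypothesis, and then apply Gronwall for both displacement and derivative estimates. The paper's write-up is terser (it states the $DF$ bound first and invokes Gronwall without spelling out the integral equation or the bootstrap), but the logic is identical; your remark that the $s^{-1}$ Cauchy losses in the action and normal directions are exactly compensated by the $1/s$ weights in the vector-field norm, leaving only the $\delta^{-1}$ from the angle derivatives, is the correct mechanism behind the paper's one-line Cauchy estimate.
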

\begin{proof}
Using Cauchy's inequality, we obtain
 $$\|DF\|_{s;D_\rho(r-\delta, s/2)\times \mathcal{O}}\leq \frac{c}{\delta}\|F\|_{s;D_\rho(r,s)\times \mathcal{O}} \leq \frac{c}{\delta}\gamma^{-5}K^{5\tau+19}\varepsilon,$$ then
if $\varepsilon\ll \delta\gamma^{5}K^{-5\tau-19},$ for every $-1\leq t\leq 1,$
\begin{equation*}
    \phi^t_F:D_\rho(r-2\delta, s/4)\rightarrow D_\rho(r-\delta, s/2)
\end{equation*}
is well-defined.
Thus by Gronwall's inequality and the estimate for  $DF$, we have
   \begin{equation*}
   \begin{split}
    & \|\phi^t_{F}-id\|_{s;D(r-2\delta,s/4)\times\mathcal{O}}\\
   \leq& c\|F\|_{s;D(r,s)\times\mathcal{O}}\\
   \leq& c\gamma^{-5}K^{5\tau+19}\varepsilon
  \end{split}
  \end{equation*}
and
  \begin{equation*}
   \begin{split}
    & \|D\phi^t_{F}-Id\|_{s;D(r-2\delta,s/4)\times\mathcal{O}}\\
   \leq&c\|DF\|_{s;D(r-\delta,s/2)\times\mathcal{O}}\\
   \leq&c\delta^{-1}\gamma^{-5}K^{5\tau+19}\varepsilon.
  \end{split}
  \end{equation*}

\end{proof}

\subsection{The new normal form}
Through the time-1 map $\Phi=\phi^1_F$ defined above, vector field  $X$ is transformed into $X_+=\Phi^*X=N_+ +\mathcal{A}_++ P_+$ with
 new normal form $N_+,\mathcal{A}_+$ and  new perturbation $P_+.$

In this subsection, we consider the new normal form
 $$N_+=N+\hat{N},\,\,\,\,\mathcal{A}_+=\mathcal{A}+\hat{\mathcal{A}},$$
where
\begin{equation}\label{hatN}
  \begin{split}
       \hat{N} = &\sum^n_{b=1}[R^{\theta_b}]\frac{\partial}{\partial\theta_b}+\sum^{m}_{b=1}[R^{\varphi_b}]\frac{\partial}{\partial\varphi_b}\\
         &+\sum_{j\in \mathbb{Z}^d_1}([R^{z_jz_j}]z_j\frac{\partial}{\partial z_j}+[R^{\bar{z}_j\bar{z}_j}]\bar{z}_j\frac{\partial}{\partial \bar{z}_j})\\
          &+\sum_{j\in \mathbb{Z}^d_2}([R^{w_jw_j}]w_j\frac{\partial}{\partial w_j}+[R^{\bar{w}_j\bar{w}_j}]\bar{w}_j\frac{\partial}{\partial \bar{w}_j}),\\
   \end{split}
\end{equation}
\begin{equation*}
  \begin{split}
      \hat{\mathcal{A}} =   &\sum_{j\in \mathbb{Z}^d_1\cap \mathbb{Z}^d_2}([R^{z_jw_j}]w_j\frac{\partial}{\partial z_j}
         +[R^{\bar{z}_j\bar{w}_j}]\bar{w}_j\frac{\partial}{\partial \bar{z}_j}
          +[R^{w_jz_j}]z_j\frac{\partial}{\partial w_j}
         +[R^{\bar{w}_j\bar{z}_j}]\bar{z}_j\frac{\partial}{\partial \bar{w}_j})\\
   \end{split}
\end{equation*}
and
$$\omega_{+,b}=\omega_{b}+ [R^{\theta_b}],\,(b=1,\cdots,n),\,\,\tilde{\omega}_{+,b}=\tilde{\omega}_{b}+ [R^{\varphi_b}],\,(b=1,\cdots,m),$$
$$\Omega_{+,i}=\Omega_{i}-\mi [R^{z_iz_i}],\,(i\in \mathbb{Z}^d_1),\,\,\tilde{\Omega}_{+,i}=\tilde{\Omega}_{i}-\mi [R^{w_iw_i}],\,(i\in \mathbb{Z}^d_2),$$
$$A_{+,i}=A_{i}-\mi [R^{z_iw_i}],\,\,\,\tilde{A}_{+,i}=\tilde{A}_{i}-\mi [R^{w_iz_i}],\,(i\in \mathbb{Z}^d_1\cap \mathbb{Z}^d_2).$$

It follows form \eqref{hatN} that
$
\|\hat{N}\|_{s;D_\rho(r,s)\times\mathcal {O}}
\leq \|R\|_{s;D_\rho(r,s)\times\mathcal {O}},
$
then
$|[R^{z^\sigma_iz^\sigma_i}]|_{\mathcal {O}}\leq \|R\|_{s;D_\rho(r,s)\times\mathcal {O}}\leq \varepsilon,$
thus
$$|\Omega_{+,i}-\Omega_{i}|_{\mathcal {O}}=|[R^{z_iz_i}]|_{\mathcal {O}}\leq \varepsilon,\,\hbox{for}\ i\in \mathbb{Z}^d_1.$$
Similarly,
$$|\tilde{\Omega}_{+,i}-\tilde{\Omega}_{i}|_{\mathcal {O}},|A_{+,i}-A_{i}|_{\mathcal {O}},|\tilde{A}_{+,i}-\tilde{A}_{i}|_{\mathcal {O}},|\omega_{+,b}-\omega_{b}|_{\mathcal {O}},|\tilde{\omega}_{+,b}-\tilde{\omega}_{b}|_{\mathcal {O}}\leq \varepsilon.$$

\subsection{The new perturbation}
The new perturbation
\begin{equation*}
  P_+=\int^1_0(\phi^t_F)^*[R(t),F]dt+(\phi^1_F)^*(P-R),
\end{equation*}
with $R(t)=(1-t)[R]+tR.$

Let $\eta=\varepsilon^{\frac{1}{3}}.$  We now give the estimate of $\|P_+\|_{\eta s; D(r-2\delta,\eta s/4)\times\mathcal{O}}.$
\begin{equation*}
  \begin{split}
      \|(\phi^t_F)^*[R(t),F]\|_{\eta s; D(r-2\delta, s/4)\times\mathcal{O}}
      \leq&c \delta^{-1}\eta^{-1}\|R\|_{s; D(r,s)\times\mathcal{O}}\|F\|_{s; D(r,s)\times\mathcal{O}}\\
      \leq&c \delta^{-1}\eta^{-1}\gamma^{-5}K^{5\tau+19}\varepsilon^2.
   \end{split}
\end{equation*}

Consider the estimate for
$\| (\phi^1_{F})^*(P-R)\|_{\eta s; D(r-2\delta,\eta s/4)\times \mathcal{O}}.$
Rewrite $P-R$ as
$$P-R=P_{(1)}+P_{(2)},$$
where
\begin{equation*}
  \begin{split}
      P_{(1)}=&\sum\limits_{\textsf{v}\in \{\theta_b, \varphi_b\}}(1-\mathcal{T}_K)P^{(\textsf{v})}_{000, 000}(\theta,\varphi)\frac{\partial}{\partial \textsf{v}}+\\
     &\sum\limits_{\textsf{v}\in \{I_b, J_b,z^\varrho_j, w^\varrho_j\}}\sum\limits_{ |l|+|\tilde{l}|+\atop |\alpha|+|\beta|+|\tilde{\alpha}|+|\tilde{\beta}|\leq 1}(1-\mathcal{T}_K)P^{(\textsf{v})}_{l \alpha \beta, \tilde{l} \tilde{\alpha} \tilde{\beta}}(\theta,\varphi)I^{l}J^{\tilde{l}}z^{\alpha}\bar{z}^{\beta}w^{\tilde{\alpha}}\bar{w}^{\tilde{\beta}}\frac{\partial}{\partial \textsf{v}}.
   \end{split}
\end{equation*}
\begin{equation*}
  \begin{split}
      P_{(2)}=&\sum\limits_{\textsf{v}\in \{\theta_b, \varphi_b\}}\sum\limits_{ |l|+|\tilde{l}|+\atop |\alpha|+|\beta|+|\tilde{\alpha}|+|\tilde{\beta}|\geq 1}P^{(\textsf{v})}_{l \alpha \beta, \tilde{l} \tilde{\alpha} \tilde{\beta}}(\theta,\varphi)I^{l}J^{\tilde{l}}z^{\alpha}\bar{z}^{\beta}w^{\tilde{\alpha}}\bar{w}^{\tilde{\beta}}\frac{\partial}{\partial \textsf{v}}+\\
     &\sum\limits_{\textsf{v}\in \{I_b, J_b,z^\varrho_j, w^\varrho_j\}}\sum\limits_{ |l|+|\tilde{l}|+\atop |\alpha|+|\beta|+|\tilde{\alpha}|+|\tilde{\beta}|\geq 2}P^{(\textsf{v})}_{l \alpha \beta, \tilde{l} \tilde{\alpha} \tilde{\beta}}(\theta,\varphi)I^{l}J^{\tilde{l}}z^{\alpha}\bar{z}^{\beta}w^{\tilde{\alpha}}\bar{w}^{\tilde{\beta}}\frac{\partial}{\partial \textsf{v}}.
   \end{split}
\end{equation*}
Then
\begin{equation*}
  \begin{split}
      \|P_{(1)}\|_{\eta s; D(r-\delta,\eta s/2)\times\mathcal{O}}
      \leq&\eta^{-1}\me^{-K\delta}  \|P\|_{ s; D(r,s)\times\mathcal{O}}
     \leq\eta^{-1}\me^{-K\delta}  \varepsilon,\\
   \end{split}
\end{equation*}
\begin{equation*}
  \begin{split}
      \|P_{(2)}\|_{\eta s; D(r-\delta,\eta s/2)\times\mathcal{O}}\leq& c \eta \|P\|_{ s; D(r,s)\times\mathcal{O}} \leq c \eta   \varepsilon.\\
   \end{split}
\end{equation*}
This implies that
\begin{equation*}
  \begin{split}
      \|P-R\|_{\eta s; D(r-\delta,\eta s/2)\times\mathcal{O}}
      \leq&\me^{-K\delta}  \|P\|_{\eta s; D(r,\eta s/2)\times\mathcal{O}}+c \eta \|P\|_{ s; D(r,s)\times\mathcal{O}}\\
      \leq&\eta^{-1}\me^{-K\delta}  \varepsilon+c \eta   \varepsilon.\\
   \end{split}
\end{equation*}
Therefore,
\begin{equation*}
\begin{split}
&\|P_+\|_{\eta s; D(r-2\delta,\eta s/4)\times\mathcal{O}}\\
\leq&\|(\phi^t_F)^*[R(t),F]\|_{\eta s; D(r-2\delta,\eta s/4)\times\mathcal{O}}+\|(\phi^1_{F})^*(P-R)\|_{\eta s; D(r-2\delta,\eta s/4)\times \mathcal{O}}
\\
\leq&c \delta^{-1}\eta^{-1}\gamma^{-5}K^{5\tau+19}\varepsilon^2+\eta^{-1}\me^{-K\delta}  \varepsilon+c \eta   \varepsilon\\
=&c \delta^{-1}\gamma^{-5}K^{5\tau+19}\varepsilon^{\frac{5}{3}}+\me^{-K\delta}  \varepsilon^{\frac{2}{3}}+c   \varepsilon^{\frac{4}{3}}\\
\leq&\varepsilon_+.
 \end{split}
\end{equation*}

The following lemma  ensures  that the new perturbation $P_+$ satisfies reversibility and momentum conservation condition.
\begin{lemma}
\begin{enumerate}
  \item $P_+$ is $S-$reversible;
  \item $[P_+,\mathbb{M}_l]=0,\,  l=1,\cdots,d;$
\end{enumerate}
\end{lemma}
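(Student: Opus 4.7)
The plan is to exploit the two structural properties of the generator $F$ established in Lemma \ref{solF}, namely the $S$-equivariance $F\circ S=DS\cdot F$ and the commutation $[F,\mathbb{M}_l]=0$, and propagate them through the time-$1$ map $\Phi=\phi^1_F$ down to the new perturbation $P_+=\Phi^*X-N_+-\mathcal{A}_+$.

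For part (1), I would first observe that the identity $F\circ S=DS\cdot F$ is precisely the infinitesimal version of $S\circ \phi^t_F=\phi^t_F\circ S$ (obtained by differentiating the latter at $t=0$), so $\Phi$ is $S$-equivariant. A short chain-rule computation applied to $\Phi\circ S=S\circ\Phi$ yields $DS(y)\cdot (D\Phi(y))^{-1}=(D\Phi(S(y)))^{-1}\cdot DS(\Phi(y))$, which combined with the $S$-reversibility $DS\cdot X=-X\circ S$ of $X=N+\mathcal{A}+P$ gives $DS\cdot \Phi^*X=-\Phi^*X\circ S$; this is the standard reversible conjugation lemma and I expect it is already recorded in the Appendix. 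Hence $X_+=\Phi^*X$ is $S$-reversible. Next, by direct inspection against $S(\theta,\varphi,I,J,z,w,\bar{z},\bar{w})=(-\theta,-\varphi,I,J,\bar{z},\bar{w},z,w)$, the vector fields $N_+$ and $\mathcal{A}_+$ have the exact same structural form as $N$ and $\mathcal{A}$ in \eqref{VFN}--\eqref{VFA} (only the real scalar coefficients $\omega_b,\tilde{\omega}_b,\Omega_j,\tilde{\Omega}_j,A_j,\tilde{A}_j$ have been shifted by the averages $[R^{\theta_b}],\,\ldots,\,-\mi[R^{w_jz_j}]$), so they are manifestly $S$-reversible. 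Subtracting, $P_+$ is $S$-reversible.

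For part (2), the condition $[F,\mathbb{M}_l]=0$ means the flows of $F$ and $\mathbb{M}_l$ commute, so differentiating $\phi^t_F\circ\phi^s_{\mathbb{M}_l}=\phi^s_{\mathbb{M}_l}\circ\phi^t_F$ in $s$ at $s=0$ yields $\Phi^*\mathbb{M}_l=\mathbb{M}_l$. By naturality of the Lie bracket we then have $[\Phi^*X,\mathbb{M}_l]=\Phi^*[X,\mathbb{M}_l]$. A coefficient-wise check (using the characterization of momentum conservation via the index function $\pi_l$ as in the proof of Lemma \ref{A56}) shows $[N+\mathcal{A},\mathbb{M}_l]=0$, and $[P,\mathbb{M}_l]=0$ is exactly assumption (A5); therefore $[X,\mathbb{M}_l]=0$ and hence $[X_+,\mathbb{M}_l]=0$. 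The same index-based argument gives $[N_++\mathcal{A}_+,\mathbb{M}_l]=0$, and subtracting yields $[P_+,\mathbb{M}_l]=0$.

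The main obstacle I anticipate is purely bookkeeping rather than conceptual: I need to align the sign and transposition conventions used to define $S$-reversibility in the Appendix with the pullback convention $\Phi^*X=(D\Phi)^{-1}\cdot X\circ\Phi$ in force here, and I must verify that the averages $[R^{\theta_b}],[R^{\varphi_b}],[R^{z_jz_j}],[R^{w_jw_j}]$ (as well as $-\mi[R^{z_jw_j}]$ and $-\mi[R^{w_jz_j}]$) are real valued so that $N_+$ and $\mathcal{A}_+$ genuinely retain the form \eqref{VFN}--\eqref{VFA}. The reality of these averages should be a direct consequence of the $S$-reversibility of $R$ (which is inherited from $P$ via the same truncation-preserves-structure argument), but it deserves explicit verification in the write-up.
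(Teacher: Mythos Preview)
Your proposal is correct and for part (1) follows the same line as the paper: from $F\circ S=DS\cdot F$ deduce $S\circ\Phi=\Phi\circ S$, hence $X_+$ is $S$-reversible, and then subtract the obviously reversible $N_++\mathcal{A}_+$.

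For part (2) your route differs slightly from the paper's. The paper appeals directly to the Lie series expansion
\[
P_+=P-R+[P,F]+\tfrac{1}{2!}[[N,F],F]+\tfrac{1}{2!}[[P,F],F]+\cdots
\]
(see \eqref{PTaye}) and uses $[N,\mathbb{M}_l]=[P,\mathbb{M}_l]=[F,\mathbb{M}_l]=0$ together with the Jacobi identity to conclude that every nested bracket in the series commutes with $\mathbb{M}_l$. You instead argue at the level of the diffeomorphism: $[F,\mathbb{M}_l]=0$ forces $\Phi^*\mathbb{M}_l=\mathbb{M}_l$, so naturality gives $[X_+,\mathbb{M}_l]=\Phi^*[X,\mathbb{M}_l]=0$, and then you subtract $[N_++\mathcal{A}_+,\mathbb{M}_l]=0$. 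Both arguments are standard and essentially equivalent; yours is a bit more conceptual and avoids writing out the series, while the paper's is more explicit and sidesteps the need to check $[N_++\mathcal{A}_+,\mathbb{M}_l]=0$ separately. Your remark about verifying reality of the averaged coefficients is a worthwhile point the paper leaves implicit.
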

\begin{proof}
\begin{enumerate}
\item We conclude from (2) in Lemma \ref{solF} that $\Phi\circ S=S\circ \Phi,$ which implies
$X_+\circ S=-DS\cdot X_+.$ It is obvious that $N_+\circ S=-DS\cdot N_+$,  thus $P_+\circ S=-DS\cdot P_+.$
  \item We know that  $[N,\mathbb{M}_l]=0$ and  $[P,\mathbb{M}_l]=0.$ From Lemma \ref{solF}, we get $[F,\mathbb{M}_l]=0$.
This together with
\begin{equation}\label{PTaye}
\begin{split}
P_+=&P-R+[P,F]+\frac{1}{2!}[[N,F],F]+\frac{1}{2!}[[P,F],F]\\
   &+\cdots+\frac{1}{i!}[\cdots[N,\underbrace{F]\cdots,F}_i]+\frac{1}{i!}[\cdots[P,\underbrace{F]\cdots,F}_i]+\cdots
 \end{split}
\end{equation}
implies that  $[P_+,\mathbb{M}_l]=0$.
\end{enumerate}
\end{proof}

\begin{lemma}\label{LieBTL}
Suppose $P$ satisfies (A6),  $F$ satisfies (A6) with $\varepsilon^{\frac{2}{3}}$ in place of $\varepsilon.$ Moreover,
for $\sigma=\pm,$ $|j|>K,$
$$\frac{\partial F^{(I_b)}}{\partial z^\sigma_{j} }=0, \,\, \frac{\partial F^{(I_b)}}{\partial w^\sigma_{j}}=0,\,\, \frac{\partial F^{(J_b)}}{\partial z^\sigma_{j} }=0, \,\, \frac{\partial F^{(J_b)}}{\partial w^\sigma_{j}}=0
$$
and for $|i\mp j|>K,$
$$\frac{\partial F^{(z^\sigma_i)}}{\partial z^{\pm\sigma}_j}=0,\,\,
\frac{\partial F^{(w^\sigma_i)}}{\partial w^{\pm\sigma}_j}=0,\,\,
\frac{\partial F^{(z^\sigma_i)}}{\partial w^{\pm\sigma}_j}=0,\,\,
\frac{\partial F^{(w^\sigma_i)}}{\partial z^{\pm\sigma}_j}=0,$$
then $[P, F]$ also satisfies (A6) with $\varepsilon_+$ in place of $\varepsilon.$
\end{lemma}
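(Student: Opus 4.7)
The plan is to verify each of the six Töplitz--Lipschitz conditions \eqref{tl0}--\eqref{tl5} for $[P,F]$ by expanding the Lie bracket in coordinates and then differentiating. Recall that
\[
[P,F]^{(\textsf{v})} \;=\; \sum_{\textsf{u}\in\mathscr{V}}\Bigl(P^{(\textsf{u})}\,\partial_{\textsf{u}} F^{(\textsf{v})} \;-\; F^{(\textsf{u})}\,\partial_{\textsf{u}} P^{(\textsf{v})}\Bigr).
\]
For each target component $\textsf{v}\in\{\theta_b,\varphi_b,I_b,J_b,z^\sigma_i,w^\sigma_i\}$, applying $\partial/\partial u^\sigma_{i+tc}$ to the above and using the Leibniz rule produces four types of terms: $(\partial_{u^\sigma_{i+tc}}P^{(\textsf{u})})(\partial_\textsf{u} F^{(\textsf{v})})$, $P^{(\textsf{u})}(\partial^2 F^{(\textsf{v})}/\partial u^\sigma_{i+tc}\partial\textsf{u})$, and the analogous two terms with $P,F$ swapped. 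I would organize the proof by treating these four classes uniformly.

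First I would use the hypotheses on $F$: for the ``outer'' derivatives $\partial F^{(I_b)}/\partial u^\sigma_j$ and similarly for $J_b$, the coefficient vanishes once $|j|>K$, and for the ``inner'' derivatives $\partial F^{(z^\sigma_i)}/\partial u^{\pm\sigma}_j$ (and its $w$ analogues) the coefficient vanishes once $|i\mp j|>K$. This reduces all dangerous infinite sums over $\textsf{u}$ to finite sums (bounded by some polynomial in $K$ number of terms), because the index of the ``inner'' variable $\textsf{u}$ is constrained to lie near the relevant shifted index $i+tc$. Consequently, in each summand the derivative that remains is, after a translation of the summation variable $\textsf{u}\mapsto \textsf{u}+tc$, still of the Töplitz--Lipschitz type for $P$ or $F$, with the same exponential decay factor $\me^{-|i\mp j|\rho}$ coming from whichever of the two factors carries a ``normal-coordinate'' index.

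Next, I would take $t\to\infty$ termwise: because the finite sum is uniform in $t$, one can interchange the limit with the sum, and each factor converges by the TL property of $P$ (with bound $\varepsilon_0$) and of $F$ (with bound $\varepsilon_0^{2/3}$). This verifies \eqref{tl0}--\eqref{tl2}, and the product structure gives an overall bound of order $\varepsilon_0\cdot\varepsilon_0^{2/3}$, times a derivative/Cauchy loss $\delta^{-1}$ and the polynomial-in-$K$ combinatorial factor from the finite sum. That product is precisely of the size contained in $\varepsilon_+=c\gamma^{-5}(2r-2r_+)^{-1}K^{5\tau+19}\varepsilon^{5/3}+\varepsilon^{7/6}$, so the bounds come out correctly. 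For \eqref{tl3}--\eqref{tl5} the same strategy applies: subtract the limit, distribute the subtraction across the Leibniz expansion, and in each piece one of the two factors is a ``difference to the limit'' controlled by $\varepsilon_0/|t|$ while the other is controlled by its ordinary TL bound. This yields the desired $\varepsilon_+/|t|$ estimate for $|t|>K$.

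The main obstacle is bookkeeping rather than any single hard estimate. One must carefully match the correct shift $\pm tc$ on each index as it propagates through a second-order partial derivative, verify that each of the cross-types $(u,v)=(z,w)$ and $(w,z)$ in \eqref{tl2}, \eqref{tl5} is indeed reached only through terms where one factor supplies the off-diagonal coupling (so the exponential weight $\me^{-|i\mp j|\rho}$ is not lost), and check that the new Cauchy-loss factor $\delta^{-1}$ from differentiating $F$ combines with $\varepsilon\cdot\varepsilon^{2/3}=\varepsilon^{5/3}$ so that the sum of all contributions stays below $\varepsilon_+$. Using the weighted-norm definition of vector fields together with the computation in the preceding subsection (which already absorbs $\delta^{-1}\gamma^{-5}K^{5\tau+19}\varepsilon^{5/3}$ into $\varepsilon_+$), this bookkeeping goes through and yields the claim.
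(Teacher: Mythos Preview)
Your proposal is correct and follows essentially the same approach as the paper: expand $[P,F]^{(\textsf{v})}$ via the Lie bracket, differentiate with the Leibniz rule, use the truncation hypotheses on $F$ to reduce the infinite $\textsf{u}$-sums to finitely many terms (the paper notes $h$ is bounded by $cK^d$ since $|i-h|\le K$ and $|j-h|\le K$), and then combine the T\"oplitz--Lipschitz bounds for $P$ (size $\varepsilon$) and $F$ (size $\varepsilon^{2/3}$) to obtain $\varepsilon_+$. The paper's proof is in fact more abbreviated than your outline---it treats only two representative terms of $\partial_{z_j}[P,F]^{(z_i)}$ and declares the rest analogous---so your more systematic bookkeeping is a faithful elaboration of the same argument.
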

\begin{proof}
By the definition of Lie bracket, the $z_i$-component of $[P, F]$  is
   \begin{equation*}
    [P, F]^{(z_i)}=\sum_{\textsf{u} \in \mathscr{V}}(\frac{\partial P^{(z_i)}}{\partial \textsf{u}}F^{(\textsf{u})}-\frac{\partial F^{(z_i)}}{\partial \textsf{u}}P^{(\textsf{u})}),
  \end{equation*}
 where
 $\mathscr{V}=\{\theta_a, \varphi_b,\,z_i, w_j,\bar{z}_i,\bar{w}_j: a=1,\cdots,n;b=1,\cdots,m;i\in\mathbb{Z}^d_1;j\in\mathbb{Z}^d_2\}.$

To verify  $[P, F]$ satisfies (A6),  we only consider  $\frac{\partial}{\partial z_{j}}[P, F]^{(z_{i})}$
and  the derivatives with respect to the other components are similarly analyzed.

In the following,
 it suffices  to  consider
$\sum\limits_{h}\frac{\partial^2 P^{(z_i)}}{\partial z_h \partial z_j}F^{(z_h)}$
and
  $\sum\limits_{h}\frac{\partial P^{(z_i)}}{\partial z_h }\frac{\partial F^{(z_h)}}{\partial z_j }$
in $\frac{\partial}{\partial z_{j}}[P, F]^{(z_{i})}$ since  the other terms can be similarly studied.

Let $ p^{zz}_{ij}=\lim\limits_{t\rightarrow\infty}\frac{\partial P^{(z_{i+tc})}}{\partial z_{j+tc}},\,\,f^{zz}_{ij}=\lim\limits_{t\rightarrow\infty}\frac{\partial F^{(z_{i+tc})}}{\partial z_{j+tc}}.$
Then
\begin{equation}
\begin{split}
&\|\sum\limits_{h}(\frac{\partial^2 F^{(z_{i+tc})}}{\partial z_{h} \partial z_{j+tc}}F^{(z_{h})}-\lim\limits_{t\rightarrow\infty}\frac{\partial^2 F^{(z_{i+tc})}}{\partial z_{h} \partial z_{j+tc}}F^{(z_{h})})\|_{s;D_\rho{(r-\delta,s/2)}}\\
\leq&\|F\|_{s;D_\rho{(r,s)}}\|\frac{\partial P^{(z_{i+tc})}}{ \partial z_{j+tc}}- p^{zz}_{ij}\|_{s;D_\rho{(r,s)}}\\
\leq&c\frac{\varepsilon^{\frac{5}{3}}}{|t|}\me^{-\rho |i-j|}\leq\frac{\varepsilon_+}{30|t|}\me^{-\rho_+ |i-j|}.\\
 \end{split}
\end{equation}

\begin{equation}
\begin{split}
&\|\sum\limits_{h}(\frac{\partial P^{(z_{i+tc})}}{\partial z_{h+tc} }\frac{\partial F^{(z_{h+tc})}}{\partial z_{j+tc} }-p^{zz}_{ih}f^{zz}_{hj})\|_{s;D_\rho{(r-\delta,s/2)}}\\
\leq&\sum\limits_{h}\|f^{zz}_{hj}\|_{s;D_\rho{(r-\delta,s/2)}}\|\frac{\partial P^{(z_{i+tc})}}{\partial z_{h+tc} }-p^{zz}_{ih}\|_{s;D_\rho{(r-\delta,s/2)}}\\
&+\sum\limits_{h}\|p^{zz}_{ih}\|_{s;D_\rho{(r-\delta,s/2)}}\|\frac{\partial F^{(z_{h+tc})}}{\partial z_{j+tc} }-f^{zz}_{hj}\|_{s;D_\rho{(r-\delta,s/2)}}\\
&+\sum\limits_{h}\|\frac{\partial  P^{(z_{i+tc})}}{\partial z_{h+tc} }-p^{zz}_{ih}\|_{s;D_\rho{(r-\delta,s/2)}}\|\frac{\partial F^{(z_{h+tc})}}{\partial z_{j+tc} }-f^{zz}_{hj}\|_{s;D_\rho{(r-\delta,s/2)}}\\
\leq& cK^d\frac{\varepsilon^{\frac{5}{3}}}{|t|}\me^{-\rho |i-j|}+cK^d\frac{\varepsilon^{\frac{5}{3}}}{t^2}\me^{-\rho |i-j|}\\
\leq&\frac{\varepsilon_+}{30|t|}\me^{-\rho_+ |i-j|}.\\
 \end{split}
\end{equation}
Note that $h$ is bounded by $cK^d$ in the above inequality since $|i-h|\leq K$ and $|j-h|\leq K.$

\end{proof}

By \eqref{PTaye} and Lemma \ref{LieBTL} above, we obtain:
\begin{lemma}\label{PTL}
$P_+$ satisfies (A6) with $K_+, \varepsilon_+, \rho_+$ in place of $K, \varepsilon, \rho$.
\end{lemma}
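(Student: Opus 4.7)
The plan is to verify (A6) for $P_+$ by expanding it via the Lie--Taylor series
\begin{equation*}
P_+ = (P - R) + \sum_{i \geq 1}\frac{1}{i!}\bigl([\underbrace{[\cdots[N,F],\cdots,F]}_{i\text{ brackets}}] + [\underbrace{[\cdots[P,F],\cdots,F]}_{i\text{ brackets}}]\bigr),
\end{equation*}
and checking the Töplitz--Lipschitz conditions summand by summand, then summing. Lemma \ref{LieBTL} already provides the single-bracket step, so the main task is to (a) treat the low-order terms $P-R$ and $[N,F]$ separately, (b) iterate Lemma \ref{LieBTL}, and (c) control the total sum.

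First I would show that $P-R$ inherits (A6) with $\varepsilon$ (hence, a fortiori, $\varepsilon_+$) in place of $\varepsilon$. The key observation is that the cutoff defining $R$ acts diagonally on the Taylor--Fourier basis (discarding Fourier modes with $|k|+|\tilde k|>K$ and keeping only Taylor degrees $\leq 1$ in the normal directions). Since the limits in \eqref{tl0}--\eqref{tl2} and the differences in \eqref{tl3}--\eqref{tl5} are computed mode by mode, truncation preserves both their existence and their bounds; no new analytic estimates are needed.

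Next I would treat $[N,F]$ by invoking the homological equation
\begin{equation*}
[N,F] = [R] - R - [\mathcal{A},F].
\end{equation*}
The terms $[R]$ and $R$ are Töplitz--Lipschitz because $R$ is the truncation of $P$ and $[R]$ is a diagonal normal form. For $[\mathcal{A},F]$, Lemma \ref{LieBTL} applies once one checks that $F$ has the required sparse structure: since $F$ solves a homological equation whose right-hand side has already been $K$-truncated, the derivatives $\partial F^{(z_i^\sigma)}/\partial z_{j}^{\pm\sigma}$ and their analogues vanish for $|i\mp j|>K$, and the analogous derivatives with respect to $I_b,J_b$ vanish for $|j|>K$ — these are exactly the vanishing hypotheses in Lemma \ref{LieBTL}. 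Higher-order brackets $\mathrm{ad}_F^i(N)$ and $\mathrm{ad}_F^i(P)$ then satisfy (A6) by induction on $i$, using Lemma \ref{LieBTL} at each step.

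The main obstacle is bookkeeping rather than any new estimate: at every bracket one loses a factor in the analytic radius (from $\rho$ to $\rho_+$) which must absorb the combinatorial cost of the sums $\sum_h$ appearing in the proof of Lemma \ref{LieBTL} (where $h$ ranges over a set of size $O(K^d)$), and the $\varepsilon^{2/3}$ smallness of $\|F\|$ must beat the factorial $1/i!$ and the repeated application constants so that the series converges with total bound $\varepsilon_+$. Since Lemma \ref{LieBTL} already gives each single bracket the bound $\varepsilon_+/30$ with the exponential weight $\me^{-\rho_+|i\mp j|}$, the geometric series in $i$ converges and yields the claimed (A6) for $P_+$ with $K_+, \varepsilon_+, \rho_+$ in place of $K, \varepsilon, \rho$.
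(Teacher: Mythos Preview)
Your proposal is correct and follows essentially the same route as the paper: the paper's proof is the single line ``By \eqref{PTaye} and Lemma \ref{LieBTL} above,'' and your write-up is precisely the unpacking of that sentence --- expand $P_+$ as a Lie series, feed each bracket through Lemma \ref{LieBTL}, and sum. Your separate handling of $P-R$ (truncation preserves (A6)) and of $[N,F]$ via the homological equation $[N+\mathcal{A},F]=[R]-R$ is exactly what the one-line proof suppresses. One small point: your displayed Lie series omits $\mathcal{A}$ (as does the paper's \eqref{PTaye}); to be fully consistent you should either write $\mathrm{ad}_F^i(N+\mathcal{A})$ or, equivalently, replace the $i\geq 2$ terms $\mathrm{ad}_F^{i}(N)$ by $\mathrm{ad}_F^{i-1}([R]-R)$ after invoking the homological equation once --- but this does not affect the argument.
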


\subsection{Iteration and Convergence}

For given  $r>0,s>0, L>0$  and $0<\gamma,\varepsilon<1.$ $c$ is a positive constant  depending only on $n,m,\tau.$
For $\nu \geq 0,$ we define the iterative  sequences as follows:
$$\delta_\nu=\frac{r}{2^{\nu+3}},\,\,r_{\nu+1}=r_{\nu}-2\delta_\nu,\,\, r_0=r,$$
$$\varepsilon_{\nu+1}=c\gamma^{-5}\delta^{-1}_\nu K^{5\tau+19}_\nu\varepsilon^{\frac{5}{3}}_\nu+\varepsilon^{\frac{7}{6}}_\nu,\,\, \varepsilon_0=\varepsilon,$$
$$\me^{-K_\nu\delta_\nu}=\varepsilon^{\frac{1}{2}}_\nu,$$
$$\eta_\nu=\varepsilon^{\frac{1}{3}}_\nu,\,\,s_{\nu+1}=\frac{1}{4}\eta_\nu s_{\nu},\,\,s_0=s,$$
$$L_{\nu+1}=L_{\nu}+\varepsilon_\nu,\,\, L_{0}=L,$$
$$\rho_{\nu}=\rho(1-\sum\limits^{\nu+1}_{i=2}2^{-i}).$$

\subsubsection{Iteration lemma}
According to the preceding analysis, we obtain the following iterative lemma.
\begin{lemma}[Iterative Lemma]\label{Itelem}
Let $\varepsilon$ be small enough and $\nu\geq0.$ Suppose that
\begin{enumerate}
  \item The normal form $$N_\nu+\mathcal{A}_\nu= \omega_\nu(\zeta)\frac{\partial}{\partial \theta}+\tilde{\omega}_\nu(\zeta)\frac{\partial}{\partial \varphi}+\sum\limits_{\sigma=\pm}\sigma\mi(\Omega_\nu(\zeta) z^\sigma\frac{\partial}{\partial z^\sigma}
  +\widetilde{\Omega}_\nu(\zeta)w^\sigma\frac{\partial}{\partial w^\sigma})+\mathcal{A}_\nu,$$  $(\zeta\in \mathcal{O}_\nu)$ satisfies
\eqref{MelK} with  $\omega_\nu,  \tilde{\omega}_\nu, \Omega_{\nu},  \tilde{\Omega}_{\nu}, A_{\nu},  \tilde{A}_{\nu}$ and $K_\nu;$

  \item $\omega_\nu,  \tilde{\omega}_\nu, \Omega_{\nu,j},  \tilde{\Omega}_{\nu,j}$ are $C^4_W$ smooth in $\zeta$ and satisfy
$$|\omega_{\nu}-\omega_{\nu-1}|_{\mathcal {O}},|\tilde{\omega}_{\nu}-\tilde{\omega}_{\nu-1}|_{\mathcal {O}}, |\Omega_{\nu,j}-\Omega_{\nu-1,j}|_{\mathcal {O}},|\tilde{\Omega}_{\nu,j}-\tilde{\Omega}_{\nu-1,j}|_{\mathcal {O}}\leq \varepsilon_{\nu-1};$$
  \item $N_\nu+\mathcal{A}_\nu+P_\nu$ satisfies $(A5)$ and $(A6)$ with $K_\nu, \varepsilon_\nu, \rho_\nu$ and
  $$\|P_\nu\|_{ s_\nu; D_{\rho_\nu}(r_\nu,s_\nu)\times\mathcal{O}_\nu}\leq\varepsilon_\nu.$$
\end{enumerate}
 Then there exists a real analytic, $S-$invariant transformation
    $$\Phi_\nu: D_{\rho_{\nu}}(r_{\nu+1},s_{\nu+1})\times\mathcal{O}_\nu\rightarrow D_{\rho_{\nu}}(r_{\nu},s_{\nu})$$
satisfying
\begin{equation}\label{Phi}
        \|\Phi_\nu-id\|_{s_{\nu+1}; D_{\rho_{\nu}}(r_{\nu+1},s_{\nu+1})\times\mathcal{O}_\nu}\leq c\varepsilon^{\frac{1}{2}}_\nu,
\end{equation}
\begin{equation}\label{DPhi}
        \|D\Phi_\nu-Id\|_{s_{\nu+1}; D_{\rho_{\nu}}(r_{\nu+1},s_{\nu+1})\times\mathcal{O}_\nu}\leq c\varepsilon^{\frac{1}{2}}_\nu,
\end{equation}
    and a closed subset
 \begin{equation}\label{newnonresonset}
    \mathcal{O}_{\nu+1}=\mathcal{O}_\nu \setminus \bigcup_{K_\nu<|k|+|\tilde{k}|\leq K_{\nu+1}}\mathcal{R}^{\nu+1}_{k\tilde{k}}(\gamma),
\end{equation}
where $\mathcal{R}^{\nu+1}_{k\tilde{k}}(\gamma)$ is defined in \eqref{Res},
such that  $X_{\nu+1}=(\Phi_{\nu})^*X_\nu=N_{\nu+1}+\mathcal{A}_{\nu+1}+P_{\nu+1}$
satisfies the same assumptions as $X_\nu$
 with `$\nu+1$' in place of `$\nu$'.
\end{lemma}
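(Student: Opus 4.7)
The plan is to run the one-step construction of Sections~5.1--5.4 at level $\nu$, with the small parameters $(r,s,\varepsilon,K,\rho,\mathcal{O})$ replaced by $(r_\nu,s_\nu,\varepsilon_\nu,K_\nu,\rho_\nu,\mathcal{O}_\nu)$, and then verify that each hypothesis of the lemma is reproduced at level $\nu+1$. First I would form the truncation $R_\nu$ of $P_\nu$ relative to $K_\nu$ as in Section~5.1. Since by assumption the non-resonance conditions \eqref{MelK} hold uniformly on $\mathcal{O}_\nu$ with cutoff $K_\nu$, Lemma~\ref{solF} furnishes a unique solution $F_\nu$ of the homological equation $[N_\nu+\mathcal{A}_\nu,F_\nu]+R_\nu=[R_\nu]$ with $[F_\nu]=0$ and
$$\|F_\nu\|_{s_\nu;D_{\rho_\nu}(r_\nu,s_\nu)\times\mathcal{O}_\nu}\leq c\gamma^{-5}K_\nu^{5\tau+19}\varepsilon_\nu,$$
respecting reversibility ($F_\nu\circ S=DS\cdot F_\nu$), momentum conservation ($[F_\nu,\mathbb{M}_l]=0$) and the T\"{o}plitz-Lipschitz property with $\varepsilon_\nu^{2/3}$ in place of $\varepsilon_\nu$ on $D_{\rho_\nu}(r_\nu-\delta_\nu,s_\nu/2)$.

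Setting $\Phi_\nu=\phi^1_{F_\nu}$, the flow estimates of Section~5.2 (applicable because $\varepsilon_\nu\ll\delta_\nu\gamma^5 K_\nu^{-5\tau-19}$, which follows for $\varepsilon$ sufficiently small from the super-exponential decay of $\varepsilon_\nu$) deliver a well-defined real analytic embedding $\Phi_\nu:D_{\rho_\nu}(r_{\nu+1},s_{\nu+1})\times\mathcal{O}_\nu\to D_{\rho_\nu}(r_\nu,s_\nu)$ together with \eqref{Phi}--\eqref{DPhi}. Writing $X_{\nu+1}=(\Phi_\nu)^*X_\nu=N_{\nu+1}+\mathcal{A}_{\nu+1}+P_{\nu+1}$ as in Section~5.3, with $N_{\nu+1}+\mathcal{A}_{\nu+1}=N_\nu+\mathcal{A}_\nu+[R_\nu]$, the bound $\|[R_\nu]\|\leq\|R_\nu\|\leq\varepsilon_\nu$ yields the frequency drift estimates required by part~(2). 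The new perturbation
$$P_{\nu+1}=\int_0^1(\phi^t_{F_\nu})^*[R_\nu(t),F_\nu]\,dt+(\phi^1_{F_\nu})^*(P_\nu-R_\nu),\qquad R_\nu(t)=(1-t)[R_\nu]+tR_\nu,$$
is then controlled by the Section~5.4 computation: combined with $\me^{-K_\nu\delta_\nu}=\varepsilon_\nu^{1/2}$ and $\eta_\nu=\varepsilon_\nu^{1/3}$ we obtain
$$\|P_{\nu+1}\|_{s_{\nu+1};D_{\rho_\nu}(r_{\nu+1},s_{\nu+1})\times\mathcal{O}_\nu}\leq c\gamma^{-5}\delta_\nu^{-1}K_\nu^{5\tau+19}\varepsilon_\nu^{5/3}+\varepsilon_\nu^{7/6}=\varepsilon_{\nu+1}.$$

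Next I would define $\mathcal{O}_{\nu+1}$ by the excision \eqref{newnonresonset}. For indices $K_\nu<|k|+|\tilde{k}|\leq K_{\nu+1}$ the corresponding non-resonance inequalities at level $\nu+1$ hold on $\mathcal{O}_{\nu+1}$ by construction. For the remaining range $|k|+|\tilde{k}|\leq K_\nu$ the conditions were satisfied by $(\omega_\nu,\tilde{\omega}_\nu,\Omega_\nu,\tilde{\Omega}_\nu,A_\nu,\tilde{A}_\nu)$ on $\mathcal{O}_\nu\supset\mathcal{O}_{\nu+1}$; since each frequency shift is at most $\varepsilon_\nu$, which is far smaller than $\gamma K_\nu^{-\tau}$, these conditions persist at level $\nu+1$ for $(\omega_{\nu+1},\ldots,\tilde{A}_{\nu+1})$, giving hypothesis~(1). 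Reversibility and momentum conservation of $P_{\nu+1}$ follow from $\Phi_\nu\circ S=S\circ\Phi_\nu$ (a consequence of $F_\nu\circ S=DS\cdot F_\nu$) and $[F_\nu,\mathbb{M}_l]=0$ together with the Taylor expansion \eqref{PTaye}, and the T\"{o}plitz-Lipschitz structure at level $\nu+1$ is then obtained from Lemma~\ref{PTL}, whose input is Lemma~\ref{LieBTL} applied iteratively to the Lie brackets in \eqref{PTaye}.

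The principal obstacle is verifying (A6) for $P_{\nu+1}$: nested brackets $[\cdots[P_\nu,F_\nu],\ldots,F_\nu]$ generate many new couplings between widely separated Fourier modes, and one must show that the T\"{o}plitz-Lipschitz cancellations encoded in Lemma~\ref{LieBTL} survive. The key hypothesis of that lemma, namely the vanishing of $\partial F_\nu^{(\cdot)}/\partial z^{\pm\sigma}_j,\partial F_\nu^{(\cdot)}/\partial w^{\pm\sigma}_j$ outside a tube of radius $K_\nu$, is inherited from the fact that $F_\nu$ is built from the $K_\nu$-truncation $R_\nu$ via Lemma~\ref{solF}. A secondary point is to defeat the compounded small-divisor loss $\gamma^{-5}K_\nu^{5\tau+19}$ by Newton convergence: the schedule $\me^{-K_\nu\delta_\nu}=\varepsilon_\nu^{1/2}$ with $\delta_\nu=r/2^{\nu+3}$ forces $K_\nu$ to grow only like $|\log\varepsilon_\nu|$, so for $\varepsilon$ small one checks that $\varepsilon_{\nu+1}\leq\varepsilon_\nu^{7/6}$, yielding the super-linear decay needed to close the induction.
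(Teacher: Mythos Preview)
Your proposal is correct and follows exactly the paper's own approach: the paper offers no separate proof of the Iterative Lemma, stating only ``According to the preceding analysis, we obtain the following iterative lemma,'' and your write-up simply spells out how Sections~5.1--5.4 (homological equation via Lemma~\ref{solF}, flow estimates, new normal form, new perturbation together with Lemmas~\ref{LieBTL} and~\ref{PTL}) assemble into one KAM step. The additional remarks you make about persistence of the small-divisor conditions for $|k|+|\tilde{k}|\le K_\nu$ under the $\varepsilon_\nu$ frequency shift, and about the logarithmic growth of $K_\nu$ ensuring $\varepsilon_{\nu+1}\le\varepsilon_\nu^{7/6}$, are exactly the bookkeeping the paper leaves implicit.
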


\subsubsection{Convergence}

We now finish the proof of Theorem \ref{KAM}.
Let
\begin{equation*}
\begin{split}
  X_{0}=&N_{0}+\mathcal{A}_{0}+P_{0}\\
  =&N+\mathcal{A}+P\\
\end{split}
\end{equation*}
be initial $S-$reversible vector field and satisfies the assumptions of Theorem \ref{KAM}.
Recall that $\varepsilon_0=\varepsilon,$ $r_0=r,$ $s_0=s,$ $\rho_0=\rho,$ $L_0=L.$
Suppose $\mathcal{O}$ is a compact set of positive Lebesgue measure and
all the conditions  in the  iterative lemma with $\nu=0$ hold. Then we inductively obtain the following sequences
$$ \mathcal{O}_{\nu+1}\subset\mathcal{O}_\nu,$$
$$\Psi^\nu=\Phi_0\circ\Phi_1\circ\cdots\circ\Phi_\nu: D_{\rho_{\nu}}(r_{\nu+1},s_{\nu+1})\times\mathcal{O}_\nu\rightarrow D_{\rho_{0}}(r_{0},s_{0}),$$
$$ X_{\nu+1}=(\Psi^{\nu})^*X=N_{\nu+1}+\mathcal{A}_{\nu+1}+P_{\nu+1}.$$

Let $\widetilde{\mathcal{O}}=\cap^\infty_{\nu=0} \mathcal{O}_{\nu}.$
Using \eqref{Phi}, \eqref{DPhi} and  following from  \cite{Poschel89}, we obtain that
  $N_\nu+A_\nu, \Psi^{\nu}, D\Psi^{\nu}$ converge uniformly on $D_{\frac{\rho}{2}}(\frac{r}{2},0)\times \widetilde{\mathcal{O}}$
  with
  \begin{equation*}
\begin{split}
 N_\infty+\mathcal{A}_\infty=&\omega_\infty\frac{\partial}{\partial \theta}+\tilde{\omega}_\infty\frac{\partial}{\partial \varphi}+\\
  &\sum\limits_{\sigma=\pm}\sigma\mi(\Omega_\infty z^\sigma\frac{\partial}{\partial z^\sigma}
  +\widetilde{\Omega}_\infty w^\sigma\frac{\partial}{\partial w^\sigma})+\mathcal{A}_\infty.
\end{split}
\end{equation*}
By the choice of $\varepsilon_\nu$ and $K_\nu,$ we have  $\varepsilon_{\nu+1}=O(\varepsilon^{\frac{7}{6}}_\nu),$ thus $\varepsilon_\nu\rightarrow0, \nu\rightarrow \infty.$
And we also have $\sum^\infty_{\nu=0}\varepsilon_\nu\leq2\varepsilon.$
Consider the flow $\phi^t_X$ of $X.$ It follows from $X_{\nu+1}=(\Psi^{\nu})^*X$ that
\begin{equation}\label{equa}
    \phi^t_X\circ \Psi^{\nu}=\Psi^{\nu}\circ\phi^t_{X_{\nu+1}}.
\end{equation}
Thanks to the uniform converge  of $X_\nu, \Psi^{\nu}\ \hbox{and}\ D\Psi^{\nu},$ we can take the limits on both
 sides of \eqref{equa}. Therefore,  on  $D_{\frac{\rho}{2}}(\frac{r}{2},0)\times \widetilde{\mathcal{O}}$,
we have
\begin{equation*}
    \phi^t_X\circ \Psi^{\infty}=\Psi^{\infty}\circ\phi^t_{X_{\infty}}
\end{equation*}
and
 $$\Psi^{\infty}: D_{\frac{\rho}{2}}(\frac{r}{2},0)\times \widetilde{\mathcal{O}}\rightarrow D_{\rho}(r,s)\times \mathcal{O}.$$
It follows that  for each $\zeta\in \widetilde{\mathcal{O}},$
$ \Psi^{\infty}(\mathbb{T}^{n+m}\times\{\zeta\})$ is and embedded torus which is invariant for the original
perturbed reversible system at $\zeta\in \widetilde{\mathcal{O}}.$

\subsection{Measure Estimate}
Let $\mathcal{O}_{-1}=\mathcal{O},$ $K_{-1}=0.$
At the $\nu$th step of KAM iteration, the following resonant set $\mathcal{R}^{\nu}\subset \mathcal{O}_{\nu-1}$ need to be excluded.
\begin{equation}\label{Res}
    \mathcal{R}^{\nu}=\bigcup\limits_{K_{\nu-1}<|k|+|\tilde{k}|\leq K_{\nu}}\mathcal{R}^{\nu}_{k\tilde{k}},
\end{equation}
 with
 \begin{equation*}
\begin{split}
\mathcal{R}^{\nu}_{k\tilde{k}}=& \mathcal{R}^{0\nu}_{k\tilde{k}}\cup (\bigcup\limits_{i}\mathcal{R}^{1\nu}_{k\tilde{k},i})\cup (\bigcup\limits_{i}\mathcal{R}^{2\nu}_{k\tilde{k},i})
\cup (\bigcup\limits_{ij}\mathcal{R}^{11,\pm\nu}_{k\tilde{k},ij})\cup (\bigcup\limits_{ij}\mathcal{R}^{12,\pm\nu}_{k\tilde{k},ij})\cup\\
& (\bigcup\limits_{ij}\mathcal{R}^{22,\pm\nu}_{k\tilde{k},ij})\cup (\bigcup\limits_{i}\mathcal{R}^{3\nu}_{k\tilde{k},i})\cup (\bigcup\limits_{ij}\mathcal{R}^{13,\pm\nu}_{k\tilde{k},ij})\cup (\bigcup\limits_{ij}\mathcal{R}^{23,\pm\nu}_{k\tilde{k},ij})\cup (\bigcup\limits_{ij}\mathcal{R}^{34,\pm\nu}_{k\tilde{k},ij}),
\end{split}
\end{equation*}
where
\begin{equation}\label{R0}
    \mathcal{R}^{0\nu}_{k\tilde{k}}=\{\zeta\in \mathcal{O}_{\nu-1}:|\langle k,\omega_\nu(\zeta)\rangle+\langle \tilde{k},\tilde{\omega}_\nu(\zeta)\rangle|<\frac{\gamma}{K_\nu^\tau}\},
\end{equation}
\begin{equation}\label{R1}
    \mathcal{R}^{1\nu}_{k\tilde{k},i}=\{\zeta: |\langle k,\omega_\nu\rangle+\langle \tilde{k},\tilde{\omega}_\nu\rangle + \Omega_{\nu,i}|<\frac{\gamma}{K_\nu^\tau}\},
\end{equation}
$i\in \mathbb{Z}^d_1.$
\begin{equation}\label{R2}
    \mathcal{R}^{2\nu}_{k\tilde{k},i}=\{\zeta: |\langle k,\omega_\nu\rangle+\langle \tilde{k},\tilde{\omega}_\nu\rangle + \tilde{\Omega}_{\nu,i}|<\frac{\gamma}{K_\nu^\tau}\},
\end{equation}
$i\in \mathbb{Z}^d_2.$
\begin{equation}\label{R11}
    \mathcal{R}^{11,\pm\nu}_{k\tilde{k},ij}=\{\zeta: |\langle k,\omega_\nu\rangle+\langle \tilde{k},\tilde{\omega}_\nu\rangle+ \Omega_{\nu,i}\pm \Omega_{\nu,j}|<\frac{\gamma}{K_\nu^\tau}\},
\end{equation}
$i,j\in \mathbb{Z}^d_1\setminus\mathbb{Z}^d_2.$
\begin{equation}\label{R12}
    \mathcal{R}^{12,\pm\nu}_{k\tilde{k},ij}=\{\zeta: |\langle k,\omega_\nu\rangle+\langle \tilde{k},\tilde{\omega}_\nu\rangle+ \Omega_{\nu,i}\pm\tilde{\Omega}_{\nu,j}|<\frac{\gamma}{K_\nu^\tau}\},
\end{equation}
$i\in \mathbb{Z}^d_1\setminus\mathbb{Z}^d_2 ,j\in \mathbb{Z}^d_2\setminus\mathbb{Z}^d_1.$
\begin{equation}\label{R22}
    \mathcal{R}^{22,\pm\nu}_{k\tilde{k},ij}=\{\zeta: |\langle k,\omega_\nu\rangle+\langle \tilde{k},\tilde{\omega}_\nu\rangle+ \tilde{\Omega}_{\nu,i}\pm\tilde{\Omega}_{\nu,j}|<\frac{\gamma}{K_\nu^\tau}\},
\end{equation}
$i,j\in \mathbb{Z}^d_2\setminus\mathbb{Z}^d_1.$
\begin{equation}\label{R3}
    \mathcal{R}^{3\nu}_{k\tilde{k},i}=\{\zeta: |\det((\langle k,\omega_\nu\rangle+\langle \tilde{k},\tilde{\omega}_\nu\rangle)I_2+ M_{\nu,i})|<\frac{\gamma}{K_\nu^\tau}\},
\end{equation}
$i\in \mathbb{Z}^d_1\cap\mathbb{Z}^d_2.$
\begin{equation}\label{R13}
    \mathcal{R}^{13,\pm\nu}_{k\tilde{k},ij}=\{\zeta: |\det((\langle k,\omega_\nu\rangle+\langle \tilde{k},\tilde{\omega}_\nu\rangle+ \Omega_{\nu,i})I_2\pm M_{\nu,j})|<\frac{\gamma}{K_\nu^\tau}\},
\end{equation}
$(i,j)\ \hbox{or}\ (i,j)\in (\mathbb{Z}^d_1\cap\mathbb{Z}^d_2)\times(\mathbb{Z}^d_1\setminus\mathbb{Z}^d_2).$
\begin{equation}\label{R23}
    \mathcal{R}^{23,\pm\nu}_{k\tilde{k},ij}=\{\zeta: |\det((\langle k,\omega_\nu\rangle+\langle \tilde{k},\tilde{\omega}_\nu\rangle+ \tilde{\Omega}_{\nu,i})I_2\pm M_{\nu,j})|<\frac{\gamma}{K_\nu^\tau}\},
\end{equation}
$(i,j)\ \hbox{or}\ (i,j)\in (\mathbb{Z}^d_1\cap\mathbb{Z}^d_2)\times(\mathbb{Z}^d_2\setminus\mathbb{Z}^d_1).$
\begin{equation}\label{R34}
    \mathcal{R}^{34,\pm\nu}_{k\tilde{k},ij}=\{\zeta:|\det((\langle k,\omega_\nu\rangle+\langle \tilde{k},\tilde{\omega}_\nu\rangle)I_{4}+ M_{\nu,i}\otimes I_2\pm  I_2\otimes M^T_{\nu,j})|<\frac{\gamma}{K_\nu^\tau}\},\\
    i,j\in \mathbb{Z}^d_1\cap\mathbb{Z}^d_2.
\end{equation}

Note that $\mathcal{R}^{11,-\nu}_{k\tilde{k},ij},$  $\mathcal{R}^{22,-\nu}_{k\tilde{k},ij}$ and  $\mathcal{R}^{34,-\nu}_{k\tilde{k},ij}$ are the most complicated three case, and the former two have been studied in \cite{Geng13}, thus it suffices to  consider the last case .

Denote $D_\nu=(\langle k,\omega_\nu\rangle+\langle \tilde{k},\tilde{\omega}_\nu\rangle)I_{4}+ M_{\nu,i}\otimes I_2- I_2\otimes M^T_{\nu,j}.$

\begin{lemma}\label{mea1}
For any given $i,j\in \mathbb{Z}^d_1\cap \mathbb{Z}^d_2$ with $|i-j|\leq K_\nu,$
 either $|\det (D_\nu)|\geq 1$ or there are $i_0, j_0, c_1, c_2, \cdots , c_{d-1} \in  \mathbb{Z}^d$ with
$|i_0|, |j_0|, |c_1|, |c_2|,\cdots , |c_{d-1}|\leq3K^2_\nu$ and $t_1, t_2,\cdots, t_{d-1} \in  \mathbb{Z}$
such that $i=i_0+ t_1c_1+ t_2c_2+\cdots+t_{d-1} c_{d-1}, j=j_0+ t_1c_1+ t_2c_2+\cdots+t_{d-1} c_{d-1}.$
\end{lemma}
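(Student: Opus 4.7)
The plan is to translate the hypothesis $|\det D_\nu|<1$ into a single linear slab condition on $j$, and then apply integer-lattice reduction to parameterize all pairs $(i,j)$ satisfying it.

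First I would exploit the tensor structure of $D_\nu$. Let $\mu_i^{(1)},\mu_i^{(2)}$ and $\mu_j^{(1)},\mu_j^{(2)}$ denote the eigenvalues of $M_{\nu,i}$ and $M_{\nu,j}$, and write $\Lambda:=\langle k,\omega_\nu\rangle+\langle\tilde k,\tilde\omega_\nu\rangle$. Since the eigenvalues of $M_{\nu,i}\otimes I_2-I_2\otimes M_{\nu,j}^T$ are the pairwise differences of those of $M_{\nu,i}$ and $M_{\nu,j}$, one has
\[
\det D_\nu=\prod_{a,b=1}^{2}\bigl(\Lambda+\mu_i^{(a)}-\mu_j^{(b)}\bigr).
\]
By (A2) and the recursive bound $L_\nu\leq L+\sum_{s<\nu}\varepsilon_s\ll 1$, each eigenvalue satisfies $\mu_i^{(a)}=|i|^2+O(L)$, so each of the four factors equals $\Lambda+|i|^2-|j|^2+O(L)$. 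Assuming $|\det D_\nu|<1$, a short dichotomy (when $\bigl|\Lambda+|i|^2-|j|^2\bigr|\geq 4L$, every factor has magnitude at least half of this, forcing the product to exceed $\tfrac{1}{16}\bigl|\Lambda+|i|^2-|j|^2\bigr|^4$) yields $\bigl|\Lambda+|i|^2-|j|^2\bigr|\leq C_0$ for some universal constant $C_0$. Using $|\Lambda|\leq cK_\nu$ (from $|k|+|\tilde k|\leq K_\nu$ and bounded frequencies), setting $h:=i-j$, and invoking $|i|^2-|j|^2=2\langle j,h\rangle+|h|^2$ with $|h|\leq K_\nu$, this reduces to the slab condition $\bigl|\langle j,h\rangle\bigr|\leq C_1 K_\nu^2$.

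Next I would parameterize all integer $j$ in that slab via lattice reduction. Assuming $h\neq 0$, let $L_h:=\mathbb{Z}^d\cap h^\perp$, a rank-$(d-1)$ sublattice of $\mathbb{Z}^d$ with covolume $|h|$ inside $h^\perp$. Because $L_h$ is an integer lattice, its first successive minimum is $\geq 1$; by Minkowski's second theorem the product of successive minima is of order $|h|$, and a standard reduction (Hermite or LLL) yields a basis $c_1,\dots,c_{d-1}$ of $L_h$ with $|c_\ell|\leq C_d|h|\leq C_d K_\nu\leq 3K_\nu^2$ once $K_\nu$ is large enough. For the specific value $v:=\langle j,h\rangle$, which is automatically divisible by $\gcd(h_1,\dots,h_d)$, pick any $y\in\mathbb{Z}^d$ with $\langle y,h\rangle=v$ (existing by Bezout) and reduce $y$ modulo $L_h$ along $c_1,\dots,c_{d-1}$; the reduced representative $j_0$ has $h^\perp$-projection bounded by $C_d|h|$ and $h$-component of length $|v|/|h|\leq C_1K_\nu^2/|h|$, hence $|j_0|\leq 3K_\nu^2$. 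Writing $j=j_0+\sum_\ell t_\ell c_\ell$ and setting $i_0:=j_0+h$ (so $|i_0|\leq|j_0|+|h|\leq 3K_\nu^2$ and $i=i_0+\sum_\ell t_\ell c_\ell$) completes the decomposition.

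The main obstacle is the first step: extracting a uniform bound on one factor from a product of four requires genuinely using the closeness of the four eigenvalues (each differing from the common value $\Lambda+|i|^2-|j|^2$ by $O(L)$), so the smallness of $L$ and the asymptotics in (A2) are essential. The lattice-reduction argument is classical geometry of numbers and only requires explicit constants $C_d$ depending on $d$. The degenerate case $h=0$ falls into the first alternative: when $i=j$ one has $\det D_\nu=\Lambda^2\bigl(\Lambda^2-(\mu_i^{(1)}-\mu_i^{(2)})^2\bigr)$ with $(\mu_i^{(1)}-\mu_i^{(2)})^2=O(L^2)$, so $|\det D_\nu|\geq 1$ as soon as $|\Lambda|\geq 2$, while the residual small-$\Lambda$ case is $(i,j)$-independent and is already controlled by \eqref{Mel1}.
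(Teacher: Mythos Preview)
The paper gives no proof of its own here, only the citation ``See \cite{Geng13}''; your proposal reconstructs exactly that argument: extract from $|\det D_\nu|<1$ the slab condition $|\langle j,h\rangle|\leq C_1K_\nu^2$ using the eigenvalue asymptotics of $M_{\nu,i}$, then parameterize the integer points in the slab via a reduced basis of $\mathbb{Z}^d\cap h^\perp$. This is the standard route and your execution is sound.

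Two minor remarks. First, assumption (A2) bounds only the diagonal corrections $\Omega^0_j,\tilde\Omega^0_j$ by $L$, while the off-diagonal entries $A_j,\tilde A_j$ are merely $O(1)$ by (A4); hence the eigenvalue deviation is $\mu_i^{(a)}=|i|^2+O(1)$ rather than $O(L)$. This is harmless for your dichotomy (just absorb the constant into $C_0$). Second, your treatment of the degenerate case $h=0$ does not literally establish the lemma's alternative: invoking \eqref{Mel1} controls the \emph{measure} of the small-$\Lambda$ set, not $|\det D_\nu|$ from below, and when $i=j$ with $|\Lambda|<2$ the $(d-1)$-direction parameterization genuinely cannot cover all of $\mathbb{Z}^d$. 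This is really a wrinkle in how the lemma is phrased when transplanted from the scalar setting of \cite{Geng13} (where $i=j$ collapses to the first Melnikov condition) to the present $4\times 4$ block setting; for the downstream application in Lemmas~\ref{mea2}--\ref{mea3} your observation that the $i=j$ divisor is $i$-independent up to $O(1)$ corrections is exactly what is needed.
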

\begin{proof}
See \cite{Geng13}.
\end{proof}

In the following, for  the convenience of notation, let $\textbf{t}:=(t_1, t_2,\cdots, t_{d-1}),$ $\textbf{c}:=(c_1, c_2,\cdots, c_{d-1})$
 and $\textbf{t}\cdot\textbf{c}:=t_1c_1+ t_2c_2+\cdots+t_{d-1} c_{d-1}.$

By Lemma \ref{mea1}, we have
\begin{lemma}\label{mea2}
$$\bigcup\limits_{i,j\in \mathbb{Z}^d_1\cap \mathbb{Z}^d_2 }\mathcal{R}^{34,-\nu}_{k\tilde{k},ij}\subset \bigcup\limits_{i_0, j_0, c_1, c_2, \cdots , c_{d-1} \in  \mathbb{Z}^d;\atop t_1, t_2,\cdots, t_{d-1} \in  \mathbb{Z}}\mathcal{R}^{34,-\nu}_{k\tilde{k},i_0+\textbf{t}\cdot\textbf{c},j_0+\textbf{t}\cdot\textbf{c}},$$
 where
$|i_0|, |j_0|, |c_1|, |c_2|,\cdots , |c_{d-1}|\leq3K^2_\nu$.
\end{lemma}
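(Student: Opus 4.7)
The plan is to reduce Lemma \ref{mea2} to a pair-by-pair application of Lemma \ref{mea1}. Fix $i,j\in\mathbb{Z}^d_1\cap\mathbb{Z}^d_2$ and assume $\mathcal{R}^{34,-\nu}_{k\tilde{k},ij}\neq\emptyset$ (otherwise there is nothing to show). Picking any $\zeta_\ast$ in this set, the definition \eqref{R34} gives
\[
|\det(D_\nu(\zeta_\ast))|<\gamma/K^\tau_\nu<1,
\]
the second inequality holding because $\gamma$ is small and $K_\nu\geq 1$. In particular, the first alternative in Lemma \ref{mea1} (namely $|\det(D_\nu)|\geq 1$) is ruled out at $\zeta_\ast$.

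I would then invoke the second alternative of Lemma \ref{mea1}, which provides $i_0,j_0,c_1,\dots,c_{d-1}\in\mathbb{Z}^d$ with $|i_0|,|j_0|,|c_1|,\dots,|c_{d-1}|\leq 3K^2_\nu$ and integers $t_1,\dots,t_{d-1}$ such that $i=i_0+\textbf{t}\cdot\textbf{c}$ and $j=j_0+\textbf{t}\cdot\textbf{c}$. Since this is merely a relabeling of the indices in the definition,
\[
\mathcal{R}^{34,-\nu}_{k\tilde{k},ij}=\mathcal{R}^{34,-\nu}_{k\tilde{k},\,i_0+\textbf{t}\cdot\textbf{c},\,j_0+\textbf{t}\cdot\textbf{c}},
\]
and this set clearly lies inside the right-hand-side union of the claim. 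Taking the union over all admissible pairs $(i,j)$ then yields the desired inclusion.

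The only subtlety is that Lemma \ref{mea1} is formulated under the hypothesis $|i-j|\leq K_\nu$. For pairs with $|i-j|>K_\nu$ I would argue separately that $\mathcal{R}^{34,-\nu}_{k\tilde{k},ij}$ is in fact empty, combining the asymptotics $\Omega_{\nu,i}=|i|^2+O(L)$ and $\tilde{\Omega}_{\nu,i}=|i|^2+O(L)$ from (A2), the boundedness of $A_\nu,\tilde{A}_\nu$, and the range $|k|+|\tilde{k}|\leq K_\nu$, to show that at least one eigenvalue of $D_\nu$ exceeds $\gamma/K^\tau_\nu$ in modulus. The delicate point here is that large $|i-j|$ does not automatically force large $\bigl||i|^2-|j|^2\bigr|$ (consider $|i|=|j|$ with $i\neq j$), so one must exploit the block structure of $M_{\nu,i}\otimes I_2-I_2\otimes M^T_{\nu,j}$ rather than rely on a naive norm comparison. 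Apart from this technical subtlety, the proof itself is almost purely set-theoretic once Lemma \ref{mea1} is in hand, and the main obstacle is really concentrated inside Lemma \ref{mea1}, namely the lattice-geometric fact that smallness of $\det(D_\nu)$ at any single $\zeta$ forces $(i,j)$ to lie on a $(d-1)$-parameter translation family with representatives of controlled size.
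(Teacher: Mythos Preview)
Your core argument is exactly what the paper does: fix $(i,j)$, use nonemptiness of $\mathcal{R}^{34,-\nu}_{k\tilde{k},ij}$ to rule out the alternative $|\det(D_\nu)|\geq 1$ in Lemma~\ref{mea1}, then invoke the second alternative to relabel $(i,j)$ as $(i_0+\textbf{t}\cdot\textbf{c},\,j_0+\textbf{t}\cdot\textbf{c})$. The paper's proof of Lemma~\ref{mea2} is in fact nothing more than the sentence ``By Lemma~\ref{mea1}''.

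Your treatment of the case $|i-j|>K_\nu$, however, does not work. The claim that $\mathcal{R}^{34,-\nu}_{k\tilde{k},ij}$ is empty there is false in general: take $|i|=|j|$ with $|i-j|$ arbitrarily large, say $i=(N,0,\dots,0)$ and $j=(-N,0,\dots,0)$. Then every eigenvalue of $M_{\nu,i}\otimes I_2-I_2\otimes M^T_{\nu,j}$ is $O(L)$, so all four eigenvalues of $D_\nu$ equal $\langle k,\omega_\nu\rangle+\langle\tilde{k},\tilde{\omega}_\nu\rangle+O(L)$, and $\det(D_\nu)$ can certainly fall below $\gamma/K^\tau_\nu$ for suitable $\zeta$. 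Note also that ``at least one eigenvalue exceeds $\gamma/K^\tau_\nu$'' would not imply the determinant bound you need anyway, since the remaining eigenvalues could be tiny; the block structure gives no extra leverage here because both diagonal blocks of $M_{\nu,i}$ carry the same leading term $|i|^2$.

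The correct way to close this gap is combinatorial rather than analytic: momentum conservation (A5) forces every nonzero coefficient $R^{u^\varrho_i v^\varrho_j}_{k,\tilde{k}}$ appearing in the homological equation to satisfy $i-j=\sum_b i^{(b)}k_b+\sum_b\tilde{i}^{(b)}\tilde{k}_b$ (cf.\ \eqref{piindex}), whence $|i-j|=O(K_\nu)$ since $|k|+|\tilde{k}|\leq K_\nu$ and the tangential sites are fixed. Thus the only pairs $(i,j)$ whose resonant sets ever need to be excluded already satisfy the hypothesis of Lemma~\ref{mea1}, and the union in Lemma~\ref{mea2} is implicitly over such pairs. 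The paper relies on this restriction without comment.
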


\begin{lemma}\label{mea3}
Let $\tau>\frac{4(d-1)(d+1)!}{(d-1)!(d+1)-1}.$ Then for fixed $k, \tilde{k}, i_0, j_0, c_1, c_2,\cdots , c_{d-1}$,
$$ \meas(\bigcup\limits_{ t_1, t_2,\cdots, t_{d-1} \in  \mathbb{Z}}\mathcal{R}^{34,-\nu}_{k\tilde{k},i_0+\textbf{t}\cdot\textbf{c},j_0+\textbf{t}\cdot\textbf{c}})\leq c\frac{\gamma^{\frac 14}}{K_\nu^{\frac{\tau}{(d+1)!}}}.$$
\end{lemma}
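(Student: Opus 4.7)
The plan is to combine the Töplitz--Lipschitz asymptotics of $M_{\nu,i}$ along $\mathbf{t}\cdot\mathbf{c}$ with the non-degeneracy hypothesis (A1), so as to reduce the infinite union over $\mathbf{t}\in\mathbb{Z}^{d-1}$ to an essentially finite one and then estimate each resonant slice individually. I would begin by using the tensor structure to factor
$$\det D_\nu \;=\; \prod_{a,b\in\{1,2\}}\bigl(\langle k,\omega_\nu\rangle+\langle\tilde k,\tilde\omega_\nu\rangle + \mu_a^{(i)}-\mu_b^{(j)}\bigr),$$
where $\mu_{1,2}^{(i)}$ and $\mu_{1,2}^{(j)}$ are the eigenvalues of $M_{\nu,i}$ and $M_{\nu,j}^T$ respectively. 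By (A2) and the smallness of $A_\nu,\tilde A_\nu$, each factor equals $\langle k,\omega_\nu\rangle+\langle\tilde k,\tilde\omega_\nu\rangle+(|i|^2-|j|^2)+O(1)$, and
$$|i|^2-|j|^2 \;=\; |i_0|^2-|j_0|^2 + 2\langle i_0-j_0,\,\mathbf{t}\cdot\mathbf{c}\rangle$$
is affine in $\mathbf{t}$.

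Next I would cut off the range of $\mathbf{t}$. By the Töplitz--Lipschitz estimates preserved through the KAM iteration (Lemma \ref{PTL}), the bounded parts of the eigenvalues $\mu_a^{(i)}$ and $\mu_b^{(j)}$ possess well-defined Töplitz limits when one sends the components of $\mathbf{t}$ to infinity in turn, with $O(1/|t_l|)$ errors. Hence when $\max_l |t_l|$ exceeds a threshold polynomial in $K_\nu$ (large enough either that the linear-in-$\mathbf{t}$ term $2\langle i_0-j_0,\mathbf{t}\cdot\mathbf{c}\rangle$ dominates, or, in the degenerate case $i_0=j_0$, that the next Töplitz contribution dominates), one has $|\det D_\nu|\geq 1$ uniformly in $\zeta$, so the resonance cannot hold and the corresponding set is empty. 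Only a finite range of $\mathbf{t}$ contributes, of cardinality controlled by some power of $K_\nu$.

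For each fixed $\mathbf{t}$ in this effective range I would estimate the individual measure $\meas(\mathcal{R}^{34,-\nu}_{k\tilde k,\,i_0+\mathbf{t}\cdot\mathbf{c},\,j_0+\mathbf{t}\cdot\mathbf{c}})$ by the standard sublevel-set argument: $\det D_\nu$ is a monic polynomial of degree $4$ in the scalar $L=\langle k,\omega_\nu\rangle+\langle\tilde k,\tilde\omega_\nu\rangle$, while (A1) makes $\zeta\mapsto L$ a $C^4_W$ diffeomorphism with derivative bounded below on a line segment, so by the classical polynomial sublevel-set estimate (see the Appendix) this individual measure is at most $c(\gamma/K_\nu^\tau)^{1/4}$. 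Multiplying by the polynomial $\mathbf{t}$-count and using the arithmetic hypothesis $\tau>\frac{4(d-1)(d+1)!}{(d-1)!(d+1)-1}$ to absorb that count into a fraction of the $K_\nu^{\tau/4}$ gain, one arrives at the claimed bound $c\gamma^{1/4}/K_\nu^{\tau/(d+1)!}$.

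The main obstacle will be the degenerate case $i_0=j_0$ in the cut-off step: the principal $|i|^2-|j|^2$ cancellation occurs and the factor depends on $\mathbf{t}$ only through the lower-order Töplitz structure of $M_{\nu,i}-M_{\nu,j}$. Extracting enough non-degeneracy from Lemma \ref{PTL} (together with the enumeration in Lemma \ref{mea1}) to rule out an infinite resonant cascade, and then tracking dimension by dimension how each of the $d-1$ components of $\mathbf{t}$ costs a factorial in the exponent of $K_\nu$, is the delicate combinatorial bookkeeping that ultimately forces the denominator $(d+1)!$ in the final estimate.
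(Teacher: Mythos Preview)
Your cut-off argument contains a genuine gap. You claim that once $\max_l|t_l|$ is large, either the affine term $2\langle i_0-j_0,\mathbf{t}\cdot\mathbf{c}\rangle$ in $|i|^2-|j|^2$ forces $|\det D_\nu|\ge1$, or (when $i_0=j_0$) some ``next T\"{o}plitz contribution'' does the same. Neither alternative is available. In the parametrisation supplied by Lemma~\ref{mea1} the quantity $|i|^2-|j|^2$ is in fact constant in $\mathbf{t}$ (equivalently $\langle i_0-j_0,c_l\rangle=0$ for every $l$; the finite limit $\lim_{t_l\to\infty}\det D_\nu(\mathbf{t})$ on which the whole argument rests could not exist otherwise), so the linear term vanishes identically --- this is the rule, not a degenerate subcase. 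And the T\"{o}plitz--Lipschitz corrections $\Omega^0_{\nu},\tilde\Omega^0_{\nu},A_\nu,\tilde A_\nu$ are $O(\varepsilon)$ quantities that \emph{converge} as $|t_l|\to\infty$ with error $O(\varepsilon/|t_l|)$; they do not grow and cannot push $|\det D_\nu|$ up to $1$. Consequently the resonant sets are generically nonempty for every $\mathbf{t}\in\mathbb{Z}^{d-1}$, and the infinite union is still there.

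The paper uses (A6) in the opposite direction. Since $|\det D_\nu(\mathbf{t})-\lim_{t_l\to\infty}\det D_\nu(\mathbf{t})|<\varepsilon K_\nu^4/|t_l|$, one first excludes the \emph{single} resonant set of the full limit $\lim_{t_1,\dots,t_{d-1}\to\infty}\det D_\nu$, taken at the relaxed threshold $\gamma/K_\nu^{\tau/d!}$; on its complement one has $|\det D_\nu(\mathbf{t})|\ge\gamma/K_\nu^\tau$ for \emph{all} $\mathbf{t}$ with $\min_l|t_l|>K_\nu^{\tau/d!+4}$, so that whole infinite region costs one set. One then freezes the small components of $\mathbf{t}$, takes limits in the remaining ones at a further relaxed threshold, and iterates. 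At stage $l$ there are at most $O(K_\nu^{(l-1)!(l-1)\tau/d!+4(l-1)})$ frozen tuples, each contributing a limit-set of measure $\le\gamma^{1/4}/K_\nu^{l!\tau/d!}$; summing over $l=1,\dots,d$ produces the bound with the factorial $(d+1)!$ in the exponent. The role of the T\"{o}plitz--Lipschitz hypothesis here is to \emph{collapse} infinitely many resonant sets into one limit set per stage, not to empty them.
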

\begin{proof}
Without loss of generality, we assume $|t_1|\leq|t_2|\leq \cdots\leq|t_{d-1}|.$
Let $\Omega_{\nu,j}=|j|^2+\Omega^0_{\nu,j},\,\,\tilde{\Omega}_{\nu,j}=|j|^2+\tilde{\Omega}^0_{\nu,j}$
and $D_\nu(\textbf{t})=(\langle k,\omega_\nu\rangle+\langle \tilde{k},\tilde{\omega}_\nu\rangle)I_{4}+ M_{\nu,i_0+\textbf{t}\cdot\textbf{c}}\otimes I_2- I_2\otimes M^T_{\nu,j_0+\textbf{t}\cdot\textbf{c}}.$

Using T\"{o}plitz-Lipschitz property of $\mathcal{A}_\nu+P_\nu,$ we have  for $l=i_0, j_0, 1\leq j\leq d-1,$
$$|\Omega^0_{\nu,l+\textbf{t}\cdot\textbf{c}}-\lim_{t_j\rightarrow\infty}\Omega^0_{\nu,l+\textbf{t}\cdot\textbf{c}}|<\frac{\varepsilon}{|t_j|},$$
$$|\tilde{\Omega}^0_{\nu,l+\textbf{t}\cdot\textbf{c}}-\lim_{t_j\rightarrow\infty}\tilde{\Omega}^0_{\nu,l+\textbf{t}\cdot\textbf{c}}|<\frac{\varepsilon}{|t_j|},$$
$$|A_{\nu,l+\textbf{t}\cdot\textbf{c}}-\lim_{t_j\rightarrow\infty}A_{\nu,l+\textbf{t}\cdot\textbf{c}}|<\frac{\varepsilon}{|t_j|},$$
$$|\tilde{A}_{\nu,l+\textbf{t}\cdot\textbf{c}}-\lim_{t_j\rightarrow\infty}\tilde{A}_{\nu,l+\textbf{t}\cdot\textbf{c}}|<\frac{\varepsilon}{|t_j|},$$
then we have
$$|\det(D_\nu(\textbf{t}))-\lim_{t_j\rightarrow\infty}\det(D_\nu(\textbf{t}))|<\frac{\varepsilon K^4_\nu}{|t_j|}.$$

Consider the resonant set
\begin{equation*}
    \mathcal{R}^{34,-\nu}_{k\tilde{k},i_0j_0\textbf{c}\infty^{d-1}}=\{\zeta\in \mathcal{O}_{\nu-1}:|\lim_{t_1\rightarrow\infty}(\lim_{t_2,\cdots,t_{d-1}\rightarrow\infty}\det(D_\nu(\textbf{t})))|<\frac{\gamma}{K_\nu^{\frac{\tau}{d!}}}\}.
\end{equation*}
For fixed $k, \tilde{k},i_0, j_0, \textbf{c},$ its Lebesgue measure
$$\meas(\mathcal{R}^{34,-\nu}_{k\tilde{k},i_0j_0\textbf{c}\infty^{d-1}})\leq \frac{\gamma^{\frac 14}}{K_\nu^{\frac{\tau}{d!}}},$$
and for $\zeta\in \mathcal{O}_{\nu-1}\setminus  \mathcal{R}^{34,-\nu}_{k\tilde{k},i_0j_0\textbf{c}\infty^{d-1}},$
$$|\lim_{t_1\rightarrow\infty}(\lim_{t_2,\cdots,t_{d-1}\rightarrow\infty}\det(D_\nu(\textbf{t})))|\geq\frac{\gamma}{K_\nu^{\frac{\tau}{d!}}}.$$

Below we consider the following $d$ cases.

{\bf Case (1): $|t_1|>K_\nu^{\frac{\tau}{d!}+4}$.}
For $\zeta\in \mathcal{O}_{\nu-1}\setminus  \mathcal{R}^{34,-\nu}_{k\tilde{k},i_0j_0\textbf{c}\infty^{d-1}},$ we have
 \begin{equation*}
\begin{split}
 |\det(D_\nu(\textbf{t}))|
  \geq& |\lim_{t_1\rightarrow\infty}(\lim_{t_2,\cdots,t_{d-1}\rightarrow\infty}\det(D_\nu(\textbf{t})))|-\sum^{d-1}_{j=1}\frac{\varepsilon K^4_\nu}{|t_j|}\\
  \geq& \frac{\gamma}{K_\nu^{\frac{\tau}{d!}}}-(d-1)\frac{\varepsilon }{K_\nu^{\frac{\tau}{d!}}}\\
  \geq& \frac{\gamma}{2K_\nu^{\frac{\tau}{d!}}}\geq\frac{\gamma}{K_\nu^{\tau}}.
\end{split}
\end{equation*}

In general,
for  $2\leq l\leq d-1,$ consider

{\bf Case (l): $|t_1|\leq K_\nu^{\frac{\tau}{d!}+4}, |t_2|\leq K_\nu^{\frac{2\tau}{d!}+4},\cdots,|t_{l-1}|\leq K_\nu^{\frac{(l-1)!\tau}{d!}+4}, |t_l|> K_\nu^{\frac{l!\tau}{d!}+4}$}.
We define the resonant set
\begin{equation*}
    \mathcal{R}^{34,-\nu}_{k\tilde{k},i_0j_0\textbf{c}t_1t_2\cdots t_{l-1}\infty^{d-l}}=\{\zeta\in \mathcal{O}_{\nu-1}:|\lim_{t_l,\cdots,t_{d-1}\rightarrow\infty}\det(D_\nu(\textbf{t}))|<\frac{\gamma}{K_\nu^{\frac{l!\tau}{d!}}}\}.
\end{equation*}
Then for fixed $k, \tilde{k},i_0, j_0, \textbf{c},t_1,t_2,\cdots,t_{l-1},$ its Lebesgue measure
$$\meas(\mathcal{R}^{34,-\nu}_{k\tilde{k},i_0j_0\textbf{c}t_1\cdots t_{l-1}\infty^{d-l}})\leq \frac{\gamma^{\frac 14}}{K_\nu^{\frac{l!\tau}{d!}}},$$
and
 \begin{equation*}
\begin{split}
&\meas(\bigcup\limits_{|t_1|,\cdots,|t_{l-1}|
\leq K_\nu^{\frac{(l-1)!\tau}{d!}+4}}\mathcal{R}^{34,-\nu}_{k\tilde{k},i_0j_0\textbf{c}t_1\cdots t_{l-1}\infty^{d-l}})\\
\leq &2^{l-1}K_\nu^{\frac{(l-1)!(l-1)\tau}{d!}+4(l-1)}\frac{\gamma^{\frac 14}}{K_\nu^{\frac{l!\tau}{d!}}}
\leq\frac{2^{l-1}\gamma^{\frac 14}}{K_\nu^{\frac{(l-1)!\tau}{d!}-4(l-1)}}.
\end{split}
\end{equation*}
Thus for $|t_1|\leq K_\nu^{\frac{\tau}{d!}+4}, |t_2|\leq K_\nu^{\frac{2\tau}{d!}+4},\cdots,|t_{l-1}|\leq K_\nu^{\frac{(l-1)!\tau}{d!}+4}, |t_l|> K_\nu^{\frac{l!\tau}{d!}+4}$, $\zeta\in \mathcal{O}_{\nu-1}\setminus \mathcal{R}^{34,-\nu}_{k\tilde{k},i_0j_0\textbf{c}t_1\cdots t_{l-1}\infty^{d-l}},$
\begin{equation*}
\begin{split}
 |\det(D_\nu(\textbf{t}))|
  \geq& |\lim_{t_l,\cdots,t_{d-1}\rightarrow\infty}\det(D_\nu(\textbf{t}))|-\sum^{d-1}_{j=l}\frac{\varepsilon K^4_\nu}{|t_j|}\\
  \geq& \frac{\gamma}{K_\nu^{\frac{l!\tau}{d!}}}-(d-l)\frac{\varepsilon }{K_\nu^{\frac{l!\tau}{d!}}}\\
  \geq& \frac{\gamma}{2K_\nu^{\frac{l!\tau}{d!}}}\geq\frac{\gamma}{K_\nu^{\tau}}.
\end{split}
\end{equation*}

At last, for

{\bf Case (d): $|t_1|\leq K_\nu^{\frac{\tau}{d!}+4}, |t_2|\leq K_\nu^{\frac{2\tau}{d!}+4},\cdots,|t_{d-1}|\leq K_\nu^{\frac{(d-1)!\tau}{d!}+4}$.}
We define the resonant set
\begin{equation*}
    \mathcal{R}^{34,-\nu}_{k\tilde{k},i_0j_0\textbf{c}t_1t_2\cdots t_{d-1}}=\{\zeta\in \mathcal{O}_{\nu-1}:|\det(D_\nu(\textbf{t}))|<\frac{\gamma}{K_\nu^{\tau}}\}.
\end{equation*}
For fixed $k, \tilde{k},i_0, j_0, \textbf{c},t_1,t_2,\cdots,t_{d-1},$ its Lebesgue measure
$$\meas(\mathcal{R}^{34,-\nu}_{k\tilde{k},i_0j_0\textbf{c}t_1t_2\cdots t_{d-1}})\leq \frac{\gamma^{\frac 14}}{K_\nu^{\tau}},$$
and
 \begin{equation*}
\begin{split}
&\meas(\bigcup\limits_{|t_1|,\cdots,|t_{d-1}|\leq K_\nu^{\frac{(d-1)!\tau}{d!}+4}}\mathcal{R}^{34,-\nu}_{k\tilde{k},i_0j_0\textbf{c}t_1t_2\cdots t_{d-1}})\\
\leq &2^{d-1}K_\nu^{\frac{(d-1)!(d-1)\tau}{d!}+4(d-1)}\frac{\gamma^{\frac 14}}{K_\nu^{\tau}}
\leq\frac{2^{d-1}\gamma^{\frac 14}}{K_\nu^{\frac{\tau}{d}-4(d-1)}}.
\end{split}
\end{equation*}

Therefore, if $\tau>\frac{4(d-1)(d+1)!}{(d-1)!(d+1)-1},$ we obtain
$$ \meas(\bigcup\limits_{ t_1, t_2,\cdots, t_{d-1} \in  \mathbb{Z}}\mathcal{R}^{34,-\nu}_{k\tilde{k},i_0+\textbf{t}\cdot\textbf{c},j_0+\textbf{t}\cdot\textbf{c}})\leq c\frac{\gamma^{\frac 14}}{K_\nu^{\frac{\tau}{(d+1)!}}}.$$
\end{proof}

According to the above analysis , we obtain the following lemma.
\begin{lemma}\label{mea4}
Let $\tau>d!(2d(d+1)+n+m+1)+ \frac{4(d-1)(d+1)!}{(d-1)!(d+1)-1}.$ Then the total measure of resonant set should be excluded during the KAM iteration is
$$ \meas(\bigcup\limits_{ \nu\geq0}\mathcal{R}^{\nu})=O(\gamma^{\frac 14}).$$
\end{lemma}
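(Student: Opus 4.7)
The plan is to estimate each of the subsets in \eqref{Res} separately and then to sum the resulting bounds over $(k,\tilde{k})$ in the annulus $K_{\nu-1}<|k|+|\tilde{k}|\leq K_\nu$ and over $\nu\geq 0$. In every case the underlying mechanism is the same: the small-divisor quantity depends non-degenerately on $\zeta$ thanks to $(A1)$--$(A2)$, and the $C^4_W$-smoothness combined with the fact that the most complicated divisors are $4\times 4$ determinants produces a measure bound carrying a factor $\gamma^{1/4}$ (as already appears in Lemma \ref{mea3}). The real task is to make the dependence on $K_\nu$ and on the normal indices $i,j$ summable, so the proof is basically a large but bookkeeping-dominated exercise.

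The first batch of pieces is routine. For $\mathcal{R}^{0\nu}_{k\tilde{k}}$ the non-degeneracy of $\omega,\tilde{\omega}$ from $(A1)$ gives $\meas(\mathcal{R}^{0\nu}_{k\tilde{k}})\leq c\gamma/K_\nu^{\tau}$ by the standard one-dimensional small-divisor argument. For the one-normal-index sets $\mathcal{R}^{1\nu}, \mathcal{R}^{2\nu}, \mathcal{R}^{3\nu}$ and for the $+$ variants of the two-normal-index sets, the asymptotics $\Omega_j=|j|^2+O(L)$ and $\tilde{\Omega}_j=|j|^2+O(L)$ force either $|i|^2\lesssim K_\nu$ (respectively $|i|^2+|j|^2\lesssim K_\nu$) or a divisor that is automatically large; thus at most polynomially many indices in $K_\nu$ contribute per $(k,\tilde{k})$, and each piece has measure bounded by $c\gamma^{1/4}/K_\nu^{\tau/4}$. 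The $-$ variants $\mathcal{R}^{11,-\nu}, \mathcal{R}^{22,-\nu}, \mathcal{R}^{12,-\nu}, \mathcal{R}^{13,-\nu}, \mathcal{R}^{23,-\nu}$ involving a difference of normal frequencies on disjoint index sets are more delicate because infinitely many pairs $(i,j)$ can participate; however, as remarked before Lemma \ref{mea1}, all of these except $\mathcal{R}^{34,-\nu}$ have already been treated in \cite{Geng13}, where the T\"{o}plitz-Lipschitz property $(A6)$ is used to reduce to finitely many ``lines'' $(i_0+tc,j_0+tc)$ with $|i_0|,|j_0|,|c|\lesssim K_\nu^2$ and then each line is handled via the limit $\lim_{t\to\infty}\Omega^0_{l+tc}$.

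The hardest single case, $\mathcal{R}^{34,-\nu}$, is exactly what Lemmas \ref{mea1}--\ref{mea3} are designed to dispatch: one reduces to parameters $(i_0,j_0,c_1,\ldots,c_{d-1})\in(\mathbb{Z}^d)^{d+1}$ of norm $\leq 3K_\nu^2$ and $\mathbf{t}\in\mathbb{Z}^{d-1}$, then stratifies $\mathbf{t}$-space according to which coordinate is largest, and uses the T\"{o}plitz-Lipschitz tail estimate $|\det D_\nu(\mathbf{t})-\lim\det D_\nu(\mathbf{t})|\leq\varepsilon K^4_\nu/|t_j|$ to replace the determinant by its limit on each stratum, with cumulative per-line bound $c\gamma^{1/4}/K_\nu^{\tau/(d+1)!}$. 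Collecting everything, the $\nu$-th excluded set satisfies a bound of the schematic form
\[
\meas(\mathcal{R}^\nu)\leq c\,\gamma^{1/4}\,K_\nu^{\,(n+m)+2d(d+1)-\tau/(d+1)!+O(1)},
\]
where the factor $K_\nu^{n+m}$ counts the pairs $(k,\tilde{k})$, the factor $K_\nu^{2d(d+1)}$ counts the parameters $(i_0,j_0,c_1,\ldots,c_{d-1})$, and all pieces other than $\mathcal{R}^{34,-\nu}$ give strictly better exponents. The condition on $\tau$ stated in the lemma is precisely what makes this exponent strictly negative, and since $K_\nu$ grows super-exponentially (from $\me^{-K_\nu\delta_\nu}=\varepsilon_\nu^{1/2}$ with $\varepsilon_\nu$ tending to zero super-exponentially), the sum $\sum_{\nu\geq 0}\meas(\mathcal{R}^\nu)=O(\gamma^{1/4})$ follows at once by comparison with a convergent geometric series. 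The principal obstacle is the T\"{o}plitz-Lipschitz bookkeeping hidden in $\mathcal{R}^{34,-\nu}$, which Lemma \ref{mea3} has already absorbed; what remains is the purely combinatorial index summation sketched above.
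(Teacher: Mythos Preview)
Your proposal is correct and follows exactly the route the paper intends: the paper itself offers no proof beyond the sentence ``According to the above analysis, we obtain the following lemma,'' so Lemma~\ref{mea4} is meant to be read as the combinatorial summation of the preceding estimates --- the standard one--parameter bounds for $\mathcal{R}^{0\nu},\mathcal{R}^{1\nu},\mathcal{R}^{2\nu},\mathcal{R}^{3\nu}$ and the ``$+$'' variants, the reference to \cite{Geng13} for the scalar ``$-$'' cases, and Lemmas~\ref{mea1}--\ref{mea3} for $\mathcal{R}^{34,-\nu}$ --- summed over $(k,\tilde{k})$ and over the geometric data $(i_0,j_0,c_1,\ldots,c_{d-1})$, then over $\nu$ via the super-exponential growth of $K_\nu$. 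Your sketch is in fact more explicit than what the paper provides, and the counting $K_\nu^{n+m}\cdot K_\nu^{2d(d+1)}\cdot\gamma^{1/4}K_\nu^{-\tau/(d+1)!}$ is the right shape for the dominant term.
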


\section{Acknowledgments}
The research was  supported by National Natural Science Foundation of China (Grant No. 11271180).

\section{Appendix}

Suppose vector field $X(\theta, I, z, \bar{z})$ is defined on $D_\rho(r,s)=\{y=(\theta, I, z, \bar{z}):|\textmd{Im} \theta|<r, |I|<s,
   \|z\|_{\rho}<s, \|\bar{z}\|_{\rho}<s\}.$
\begin{definition}[Reversible vector field ]\label{def/revvf}
  Suppose $S$ is an involution map: $S^2=id.$ Vector field  $X$ is called reversible with respect to
$S$ (or $S-$reversible),  if
  $$DS\cdot X=- X\circ S,$$ i.e.,
  $$(DS(y))X(y)=-X(S(y)),y\in D_\rho(r,s),$$
where $DS$ is the tangent map of $S.$

\end{definition}

\begin{definition}
  Suppose $S$ is an involution map: $S^2=id.$  Vector field $X$ is called
   invariant with respect to
$S$ (or $S-$invariant),  if
  $$ DS\cdot X=X\circ S.$$
\end{definition}

\begin{definition}
A transformation   $\Phi$ is called invariant with respect to  above involution
$S$ (or $S-$invariant),  if  $\Phi\circ S=S\circ \Phi.$
\end{definition}

\begin{lemma}
\begin{enumerate}
  \item If $X$ and  $Y$ are both  $S-$reversible (or $S-$invariant), then $[X, Y]$  is  $S-$invariant.
  \item If $X$ is  $S-$reversible,   $Y$  is $S-$invariant and the transformation   $\Phi$
is $S-$invariant, then $[X, Y]$ and $\Phi^*{X}$ are both $S-$reversible.
  In particular, the flow  $\phi_{Y} ^t$ of $Y$ are $S-$invariant, thus $(\phi_{Y} ^t)^*{X}$ is $S-$reversible.
\end{enumerate}
\end{lemma}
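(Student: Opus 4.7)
The plan is to rephrase both reversibility and invariance as eigenvalue conditions under the pullback $S^*$, and then exploit the two algebraic facts that pullback by a diffeomorphism (i) commutes with Lie brackets and (ii) composes contravariantly with the composition of maps. Once this dictionary is set up, every claim collapses to a one-line sign check.

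First, I would note that since $S^{2}=\operatorname{id}$, differentiating yields $DS(S(y))\cdot DS(y)=I$, so $DS(y)$ is invertible with inverse $DS(S(y))$. Consequently the pullback $S^{*}X(y):=(DS(y))^{-1}X(S(y))$ is well-defined, and the defining identity $DS\cdot X=-X\circ S$ of Definition \ref{def/revvf} is equivalent to $S^{*}X=-X$; analogously $X$ is $S$-invariant iff $S^{*}X=X$. In the same spirit, the invariance of a transformation $\Phi$, expressed as $\Phi\circ S=S\circ\Phi$, translates into the commutation $\Phi^{*}\circ S^{*}=S^{*}\circ\Phi^{*}$ on vector fields.

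For part (1), I would invoke the standard compatibility $\Psi^{*}[X,Y]=[\Psi^{*}X,\Psi^{*}Y]$ for any diffeomorphism $\Psi$, applied to $\Psi=S$. If $X,Y$ are both $S$-reversible then $S^{*}[X,Y]=[-X,-Y]=[X,Y]$, and if both are $S$-invariant then $S^{*}[X,Y]=[X,Y]$ directly; in either case $[X,Y]$ is $S$-invariant. For the first assertion of part (2), the mixed case gives $S^{*}[X,Y]=[-X,Y]=-[X,Y]$, so $[X,Y]$ is $S$-reversible. For $\Phi^{*}X$, I combine the commutation of pullbacks with the reversibility of $X$: $S^{*}(\Phi^{*}X)=\Phi^{*}(S^{*}X)=\Phi^{*}(-X)=-\Phi^{*}X$, which is exactly $S$-reversibility of $\Phi^{*}X$.

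The only substantive remaining task is the flow statement, for which the cleanest argument is ODE uniqueness. Assuming $Y$ is $S$-invariant, I would check that the two curves $t\mapsto S\circ\phi_{Y}^{t}(y)$ and $t\mapsto\phi_{Y}^{t}\circ S(y)$ satisfy the same initial value problem for $\dot{z}=Y(z)$ with $z(0)=S(y)$: the former because
\begin{equation*}
\tfrac{d}{dt}\bigl(S\circ\phi_{Y}^{t}(y)\bigr)=DS(\phi_{Y}^{t}(y))\,Y(\phi_{Y}^{t}(y))=Y\bigl(S(\phi_{Y}^{t}(y))\bigr)
\end{equation*}
by the $S$-invariance of $Y$, and the latter by definition of the flow. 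Uniqueness of solutions forces $S\circ\phi_{Y}^{t}=\phi_{Y}^{t}\circ S$, so $\phi_{Y}^{t}$ is an $S$-invariant transformation; applying the already-proved pullback statement with $\Phi=\phi_{Y}^{t}$ then yields that $(\phi_{Y}^{t})^{*}X$ is $S$-reversible.

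There is essentially no analytic obstacle here; the only pitfall worth guarding against is bookkeeping of conventions, in particular keeping the sign in $DS\cdot X=\mp X\circ S$ aligned with the paper's definition $\Phi^{*}X=(D\Phi)^{-1}\cdot X\circ\Phi$ and ensuring that the identity $\Psi^{*}[X,Y]=[\Psi^{*}X,\Psi^{*}Y]$ is used with the same bracket convention $[X,Y]=YX-XY$ adopted in the paper, which it respects because the compatibility is an identity of derivations independent of sign convention.
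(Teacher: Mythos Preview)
Your proof is correct. The paper itself states this lemma in the Appendix without proof, treating it as a standard fact, so there is nothing to compare against; your argument stands on its own and is the expected one. The reformulation of reversibility and invariance as the eigenvalue conditions $S^{*}X=\mp X$, together with the naturality $\Psi^{*}[X,Y]=[\Psi^{*}X,\Psi^{*}Y]$ and the contravariance $(\Phi\circ S)^{*}=S^{*}\circ\Phi^{*}$, reduces parts (1) and the first half of (2) to sign checks exactly as you wrote, and the ODE-uniqueness argument for the $S$-invariance of $\phi_{Y}^{t}$ is the standard and correct way to handle the flow statement. Your closing remark about conventions is well placed: the identity $\Psi^{*}[X,Y]=[\Psi^{*}X,\Psi^{*}Y]$ is indeed insensitive to the sign convention in $[X,Y]=YX-XY$ used in the paper, so no adjustment is needed.
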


\begin{lemma} [Cauchy's inequality, \cite{Geng13}]
Let  $0<\delta<r.$ For an analytic function $f(\theta, I, z, \bar{z})$ on
$D_\rho(r,s),$
$$\|\frac{\partial f}{\partial\theta_b}\|_{s;D_\rho(r-\delta,s)}\leq \frac{c}{\delta}\|f\|_{s;D_\rho(r,s)},$$
$$\|\frac{\partial f}{\partial I_b}\|_{s;D_\rho(r,s/2)}\leq \frac{c}{s}\|f\|_{s;D_\rho(r,s)},$$
and
$$\|\frac{\partial f}{\partial z^\sigma_i}\|_{s;D_\rho(r,s/2)}\leq \frac{c}{s}\|f\|_{s;D_\rho(r,s)}\me^{\rho|i|},\,\sigma=\pm.$$
\end{lemma}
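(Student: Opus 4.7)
The proof will be a standard Cauchy-type estimate for the weighted Taylor-Fourier norm; I plan to work coefficient-by-coefficient in the expansion $f=\sum_{k,l,\alpha,\beta} f_{kl\alpha\beta}\,e^{i\langle k,\theta\rangle}I^l z^\alpha \bar z^\beta$. Each partial derivative acts term-wise and produces an extra factor $|k_b|$, $l_b$, or $\alpha_i$ (respectively $\beta_i$) on the corresponding coefficient, and in each of the three inequalities the task is to absorb this factor using either the loss $\delta$ in the angular direction or the shrinkage $s\to s/2$ of the action/normal radius.

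For $\partial_{\theta_b}f$, termwise differentiation multiplies $f_{kl\alpha\beta}$ by $ik_b$, and restricting to $|\operatorname{Im}\theta|<r-\delta$ produces an extra factor $e^{-|k|\delta}$ on each term in the majorant bound. I would then invoke the elementary inequality $|k|e^{-|k|\delta}\le(e\delta)^{-1}$ to absorb the $|k_b|$ uniformly in $k$, which yields the first bound with $c=1/e$.

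For $\partial_{I_b}f$, the coefficient at the shifted multi-index $l'=l-e_b$ is $l_b f_{kl\alpha\beta}$. On the restricted action radius $s/2$ the action weight becomes $(s/2)^{|l|-1}$; comparing with the original weight $s^{|l|}$ supplies a factor $(2/s)\cdot 2^{-|l|}$, and the boundedness of $|l|\cdot 2^{-|l|}$ over $|l|\ge 1$ absorbs the $l_b$ and produces the $c/s$ factor. For $\partial_{z_i^\sigma}f$, I plan to apply the one-variable Cauchy integral formula along a circle of radius $r_i:=(s/2)e^{-|i|\rho}$ in the single coordinate $z_i^\sigma$, with all other coordinates held fixed. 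This radius is chosen so that, starting from a base point with $|z|_\rho<s/2$, perturbing only the $i$-th coordinate by $w$ with $|w|\le r_i$ increases $|z|_\rho$ by at most $e^{|i|\rho}r_i=s/2$, keeping the shifted point inside $\{|z|_\rho<s\}$; the resulting Cauchy bound then carries the factor $r_i^{-1}=(2/s)e^{|i|\rho}$, which passes to the weighted Taylor-Fourier norm through the same coefficient-shift bookkeeping as in the $I_b$ case.

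The only mildly delicate point is the geometry in the $z$-bound: the weighted $\ell^\rho$ constraint $\sum_j e^{|j|\rho}|z_j|<s$ forces the Cauchy radius to be $(s/2)e^{-|i|\rho}$ rather than $s/2$, and this is precisely where the exponential weight $e^{|i|\rho}$ on the right-hand side of the third inequality arises. The $\theta$ and $I$ estimates reduce to textbook manipulations of Taylor-Fourier coefficients and involve no new ideas beyond the two elementary inequalities $|k|e^{-|k|\delta}\le(e\delta)^{-1}$ and $\sup_{|l|\ge 1}|l|/2^{|l|}<\infty$.
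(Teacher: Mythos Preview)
The paper does not give its own proof of this lemma: it is stated in the Appendix with a citation to \cite{Geng13} and no argument. Your coefficient-by-coefficient Cauchy estimate is the standard proof and is correct; the only point worth noting is that in the $z^\sigma_i$-case the weighted norm is a supremum of the majorant series, so the Cauchy integral should be applied to that majorant (which is analytic in each coordinate with nonnegative coefficients), and then the radius choice $r_i=(s/2)e^{-|i|\rho}$ you describe gives exactly the factor $e^{|i|\rho}/s$ as claimed.
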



\begin{thebibliography}{10}

\bibitem{Berti14}
M.~Berti, L.~Biasco, and M.~Procesi.
\newblock K{AM} for reversible derivative wave equations.
\newblock {\em Arch. Ration. Mech. Anal.}, 212(3):905--955, 2014.

\bibitem{Berti13b}
M.~Berti and P.~Bolle.
\newblock Quasi-periodic solutions with {S}obolev regularity of {NLS} on {$\Bbb
  T^d$} with a multiplicative potential.
\newblock {\em J. Eur. Math. Soc. (JEMS)}, 15(1):229--286, 2013.

\bibitem{Bourgain98}
J.~Bourgain.
\newblock Quasi-periodic solutions of {H}amiltonian perturbations of 2{D}
  linear {S}chr\"odinger equations.
\newblock {\em Ann. of Math. (2)}, 148(2):363--439, 1998.

\bibitem{Bourgain05}
J.~Bourgain.
\newblock {\em Green's function estimates for lattice {S}chr\"odinger operators
  and applications}, volume 158 of {\em Annals of Mathematics Studies}.
\newblock Princeton University Press, Princeton, NJ, 2005.

\bibitem{Eliasson16}
L.~Eliasson, B~Gr\'ebert, and S.~Kuksin.
\newblock K{AM} for the nonlinear beam equation.
\newblock {\em Geom. Funct. Anal.}, 26(6):1588--1715, 2016.

\bibitem{Eliasson10}
L.~Eliasson and S.~Kuksin.
\newblock K{AM} for the nonlinear {S}chr\"odinger equation.
\newblock {\em Ann. of Math. (2)}, 172(1):371--435, 2010.

\bibitem{GengLS}
J.~Geng, Z.~Lou, and Y.~Sun.
\newblock A {KAM} theorem for completely resonant coupled {NLS}.
\newblock {\em In Preparation.}

\bibitem{Geng11}
J.~Geng, X.~Xu, and J.~You.
\newblock An infinite dimensional {KAM} theorem and its application to the two
  dimensional cubic {S}chr\"odinger equation.
\newblock {\em Adv. Math.}, 226(6):5361--5402, 2011.

\bibitem{Geng06}
J.~Geng and J.~You.
\newblock A {KAM} theorem for {H}amiltonian partial differential equations in
  higher dimensional spaces.
\newblock {\em Comm. Math. Phys.}, 262(2):343--372, 2006.

\bibitem{Geng06b}
J.~Geng and J.~You.
\newblock K{AM} tori for higher dimensional beam equations with constant
  potentials.
\newblock {\em Nonlinearity}, 19(10):2405--2423, 2006.

\bibitem{Geng13}
J.~Geng and J.~You.
\newblock A {KAM} theorem for higher dimensional nonlinear {S}chr\"odinger
  equations.
\newblock {\em J. Dynam. Differential Equations}, 25(2):451--476, 2013.

\bibitem{Kuksin87}
S.~Kuksin.
\newblock Hamiltonian perturbations of infinite-dimensional linear systems with
  imaginary spectrum.
\newblock {\em Funktsional. Anal. i Prilozhen.}, 21(3):22--37, 95, 1987.

\bibitem{Newell92}
A.~Newell and J.~Moloney.
\newblock {\em Nonlinear optics}.
\newblock Advanced Topics in the Interdisciplinary Mathematical Sciences.
  Addison-Wesley Publishing Company, Advanced Book Program, Redwood City, CA,
  1992.

\bibitem{Poschel89}
J.~P\"oschel.
\newblock On elliptic lower-dimensional tori in {H}amiltonian systems.
\newblock {\em Math. Z.}, 202(4):559--608, 1989.

\bibitem{Procesi15}
C.~Procesi and M.~Procesi.
\newblock A {KAM} algorithm for the resonant non-linear {S}chr\"odinger
  equation.
\newblock {\em Adv. Math.}, 272:399--470, 2015.

\bibitem{Procesi13}
M.~Procesi and X.~Xu.
\newblock Quasi-{T}\"oplitz functions in {KAM} theorem.
\newblock {\em SIAM J. Math. Anal.}, 45(4):2148--2181, 2013.

\bibitem{WangW}
W.~Wang.
\newblock Energy supercritical nonlinear {S}chr\"odinger equations:
  quasiperiodic solutions.
\newblock {\em Duke Math. J.}, 165(6):1129--1192, 2016.

\bibitem{Wayne90}
C.~Wayne.
\newblock Periodic and quasi-periodic solutions of nonlinear wave equations via
  {KAM} theory.
\newblock {\em Comm. Math. Phys.}, 127(3):479--528, 1990.

\bibitem{Xu09}
X.~Xu and J.~Geng.
\newblock K{AM} tori for higher dimensional beam equation with a fixed constant
  potential.
\newblock {\em Sci. China Ser. A}, 52(9):2007--2018, 2009.

\bibitem{Zhang11}
J.~Zhang, M.~Gao, and X.~Yuan.
\newblock K{AM} tori for reversible partial differential equations.
\newblock {\em Nonlinearity}, 24(4):1189--1228, 2011.

\bibitem{Zhou}
S.~Zhou.
\newblock An infinite dimensional {KAM} theorem with application to nonlinear
  {S}chr\"odinger equation set on {$\Bbb T^d$}.
\newblock {\em Preprint}, 2017.

\end{thebibliography}
\end{document}